\title{\textsc{Improved Trial and Error Learning\\
for Random Games}}
\author{
    Jérôme Taupin
    \thanks{Université Paris-Saclay, France}
    \and
    Xavier Leturc
    \thanks{Thales SIX GTS, France}
    \and
    Christophe J. Le Martret
    \footnotemark[2]
}
\begin{document}

\maketitle

\begin{abstract}
When a game involves many agents or when communication between agents is not possible, it is useful to resort to distributed learning where each agent acts in complete autonomy without any information on the other agents' situations. 
Perturbation-based algorithms have already been used for such tasks \cite{TEL,ODL}.
We propose some improvements based on practical observations to improve the performance of these algorithms.
We show that the introduction of these changes preserves their theoretical convergence properties towards states that maximize the average reward and improve them in the case where optimal states exist.
Moreover, we show that these algorithms can be made robust to the addition of randomness to the rewards, achieving similar convergence guarantees.
Finally, we discuss the possibility for the perturbation factor of the algorithm to decrease during the learning process, akin to simulated annealing processes.
\end{abstract}

\section{Introduction}
\label{sec:intro}

Consider a game where multiple agents choose an action among a set of possible actions and observe a reward as an outcome, called utility in this paper.
We are interested in the study of algorithms that aim at finding a choice of strategies that maximizes the average utility of all agents in a distributed manner when playing the same game repeatedly.
By distributed, it is meant that each agent only has knowledge of its own action and its perceived outcome when choosing the next action.
Despite this strong constraint, perturbation-based algorithms such as \ac{tel} \cite{TEL} and \ac{odl} \cite{ODL} are shown to converge to some sense towards action profiles maximizing welfare.
These algorithms rely on a fixed perturbation factor $\epsilon>0$. It is proven \cite{TEL} that when choosing $\epsilon$ close to $0$, the asymptotic behavior of \ac{tel} is to almost only visit Nash equilibria maximizing welfare.
This result is proven by showing that the process induced by \ac{tel} adheres to a kind of perturbed Markov processes, for which the convergence is linked to a notion of potential over an appropriate resistance graph \cite{young93}.

\subsection{Problem Setup}
\label{sec:problem_setup}

Let $I$ be a finite set of $n$ agents. At each round, each agent $i\in I$ selects an action $a_i$ among a finite choice.
Given an action profile $\ba=(a_i)_{i\in I}$ of chosen actions for all agents, each agent $i$ observes a utility $U_i(\ba)$.
As there is a finite amount of action profiles, the set of all possible utilities is finite and we assume without loss of generality that utilities are bounded within $[0,1]$.
The quality of an action profile is measured via its global welfare
\[W(\ba) \eqdef \sum_i U_i(\ba)~.\]
The aim of the algorithms discussed in this paper is to push agents to autonomously act in a way that maximizes this global quantity. Precisely, the proposed algorithm are shown to favor optimal states where all agents have a utility of $1$. If no such configuration exists, they are shown to favor Nash equilibria maximizing the welfare. If there are no Nash equilibria, they are shown to favor states maximizing a trade-off between welfare and a quantity representing the stability of the state.
These statements are formalized by our main results \cref{thm:itel:convergence,thm:ritel:convergence}.

In this paper vectors with coordinates over all agents are written in bold font. Given a vector $\ba$, its $i$-th component is denoted $a_i$ and the remaining components are denoted $a_{-i}$. The vector $(a'_i,a_{-i})$ is the result of replacing $a_i$ with $a'_i$ in the vector $\ba$. Similarly, for a subset $J$ of agents, the corresponding coordinates $a_J$ can be replaced with $a'_J$ to form a new vector $(a'_J,a_{-J})$.
We recall the definition of a Nash equilibrium, which indicates an action profile where no agent can individually change its action to improve its own utility.
\begin{definition}[Nash Equilibrium]
    \label{def:equilibrium}
    An action profile $\ba$ with resulting utilities $\bu$ is a Nash equilibrium---shortened to \emph{equilibrium} in this paper---if for all agent $i$ and action $a'_i\ne a_i$, $U_i(a'_i,a_{-i}) \le u_i$.
\end{definition}

\medskip
The algorithms studied in this paper shall be described as \acp{pmp}. A \ac{pmp} is a family of Markov processes indexed by a perturbation factor $\epsilon\ge0$.
Such process revolves around an homogeneous Markov chain of transition matrix $P^0$ over a finite space $\cX$ which is called the \emph{unperturbed process}, with $P^0_{x,y}$ denoting the probability of transitioning from a state $x$ to another state $y$.
The perturbed process is a homogeneous Markov chain $P^\epsilon$ that behaves in such a way that the perturbation vanishes as the perturbation factor $\epsilon>0$ converges to $0$.
\begin{definition}[Perturbed Markov process]
\label{def:pmp}
A family of homogeneous Markov chains $(P^\epsilon)_{0\le\epsilon<\epsilon_0}$ over a finite state space $\cX$---referred to as $(P^\epsilon)$ for simplicity---forms a \ac{pmp} if for all $x,y\in\cX$, $\lim_{\epsilon\to0} P^\epsilon_{x,y} = P^0_{x,y}$~.
\end{definition}

The convergence of algorithms shall be expressed in terms of stochastically stable states of the underlying \ac{pmp} as defined below.
\begin{definition}[Stochastically stable states]
\label{def:stochastically_stable}
Given a \ac{pmp} $(P^\epsilon)$ with state space $\cX$, we call \acp{sss} the smallest subset $\cS \subset \cX$ such that
the Markov process $(X^\epsilon_k)_{k\ge0}$ with transition matrix $P^\epsilon$ and any initial distribution $X^\epsilon_0$ satisfies
\begin{align}
    \label{eq:def:stochastically_stable}
    \lim_{\epsilon\to0}~\liminf_{k\to\pinfty}~ \PP\bLp X_k^\epsilon\in\cS \bRp = 1~.
\end{align}
In the case where $P^\epsilon$ is aperiodic and irreducible with stationary distribution $\pi^\epsilon$ for all $\epsilon>0$, \cref{eq:def:stochastically_stable} is equivalent to
\begin{align*}
    \lim_{\epsilon\to0}~ \pi^\epsilon_\cS = 1
\end{align*}
where we denote for short $\pi^\epsilon_\cF = \pi^\epsilon(\cF)$ and $\pi^\epsilon_x = \pi^\epsilon(\{x\})$ for any $\cF\subset\cX$ and $x\in\cX$. 
In this case, a state $x$ is a \ac{sss} if and only if
\begin{align*}
    \limsup_{\epsilon\to0}~ \pi^\epsilon_x > 0~.
\end{align*}
\end{definition}

\Cref{eq:def:stochastically_stable} means that for small enough $\epsilon>0$, the algorithm with perturbation $\epsilon$ asymptotically remains in $\cS$ for a fraction of the time arbitrarily close to $1$. The \acp{sss} are the states that are asymptotically visited for a non-negligible time by the process when $\epsilon$ is close to $0$.
The algorithms proposed in this paper are designed so that the \ac{sss} are included in the states that we seek to reach, meaning that the algorithm spends most of the time in these states when $\epsilon$ is small enough.

\subsection{Paper Outline and Contributions}

This paper focuses on variants of the \ac{tel} algorithm. We first propose an improved variant called \ac{itel}, that eliminates sub-optimal behaviors using the fact that utilities are known to be bounded.
We then adapt \ac{itel} to work in the context of random games, resulting in a new algorithm called \ac{ritel}.

\Cref{sec:framework} describes the framework used to study the proposed algorithms.
Precisely, \cref{sec:rpmp} recalls the theory from \cite{young93}, which consists in building a resistance graph over the state space of a \ac{pmp} with a regularity assumption, called \ac{rpmp}. This resistance graph is equipped with a notion of potential and it is shown---see \cref{thm:sss_min_gamma}---that the \acp{sss} of a \ac{rpmp} are exactly the states that minimize this potential in the case where the perturbed process is aperiodic and irreducible. In our context we shall see that either the process is indeed aperiodic and irreducible, or it has well-identified absorbing states that make up the \acp{sss}.
These results were applied to \ac{tel} in \cite{TEL} to identify its \acp{sss}, and we use the same reasoning for \ac{itel} and \ac{ritel}.
To this purpose, \cref{sec:pdl} introduces a general formulation of algorithms designed to fit this theory in the sense that the underlying process is a \ac{rpmp}.
At any point, each agent is associated with a state made up of a mood, a benchmark action and a benchmark utility. The algorithm then consists of an \emph{action policy} describing which action the agent should choose based on its state, and of an \emph{update policy} describing how it should update its state based on the outcome. 

\Cref{sec:itel} introduces the \ac{itel} algorithm and generalizes the convergence statement from \ac{tel} to \ac{itel}.
The differences compared to \ac{tel} essentially boil down to the fact that a utility of $1$ is necessarily optimal and sub-optimal behaviors of \ac{tel} can be removed with this observation.
The policies defining \ac{itel} are given in \cref{tab:itel:policies}.
Informally, it is shown---see \cref{thm:itel:convergence}---that \ac{itel} favors states where all agents have optimal utility, then equilibrium states with maximal welfare, then states maximizing a trade-off between welfare and stability similarly to \ac{tel}.
The main difference lies in the fact that if optimal configurations exist, where all agents achieve a maximal utility of $1$, then the corresponding states are absorbing and \ac{itel} almost surely reaches one such configuration and remains within it.
The proof of convergence of \ac{itel}, which is deferred to \cref{sec:itel:proof}, follows the same overall reasoning as in \cite{TEL}, although some adaptations are required to fit the slight changes and the way arguments are presented may differ slightly.
There exists a closely related algorithm called \ac{odl} \cite{ODL}, which maximizes global welfare without discriminating Nash equilibria.
As for \ac{tel}, it can be improved into an new algorithm \ac{iodl} using the same ideas, which is discussed briefly in \cref{sec:iodl} for completeness.

In the context of random games where agents receive random payoffs and the goal is to maximize the mean welfare, we adapt \ac{itel} in \cref{sec:ritel}  to obtain another algorithm \ac{ritel} that is robust to noisy utilities.
The randomness of utilities introduces several hurdles, which are successfully overcome by having the agents play the same action for periods of $\tau$ rounds to evaluate more accurately the mean utility and by having utilities quantized into bins of size $\delta$.
Provided that both parameters are chosen appropriately, the \acp{sss} of \ac{ritel} are shown to be approximately the same as for \ac{itel} in the deterministic case, see \cref{thm:ritel:convergence}.
The analysis of the \ac{ritel} process requires a slightly weaker notion of regularity that is presented in \cref{sec:ritel:almost_reg}. In particular, \cref{thm:sss_min_gamma} is adapted to fit this larger class of \acp{pmp} in the form of \cref{thm:sss_min_gamma_AR}.

Finally, \cref{sec:gsa} draws a parallel between our approach and generalized simulated annealing. In particular, letting the perturbation factor slowly decrease at an appropriate rate in \ac{itel} yields a stronger sense of convergence.

\bigskip\noindent
This paper is the result of a work done during an internship at Thales SIX GTS in spring 2023.
An application of this work focusing on \ac{itel} and its application to the problem of distributed dynamic channel assignment in wireless clustered networks has already been published, see~\cite{taupinImprovedTrialError2024}.

\section{Theoretical Framework}
\label{sec:framework}

This section introduces the framework necessary to define our algorithms and discusses their convergence properties.

\subsection{Regular Perturbed Markov Processes and Potential}
\label{sec:rpmp}

This section recalls useful results from \cite{young93}.
First, let us describe the regularity condition required on \acp{pmp}.
\begin{definition}[Regular Perturbed Markov Process]
    \label{def:regularity}
    A family $(X^\epsilon)_{0\le\epsilon<\epsilon_0}$ of non-negative real numbers is \emph{regular} if $X^\epsilon=0$ or if there exists $r\ge0$ such that
    \begin{align}
        \label{eq:def:regularity}
        0 < \lim_{\epsilon\to0}\epsilon^{-r}X^\epsilon < \pinfty.
    \end{align}
    $r$ is unique and called the \emph{resistance} of $X^\epsilon$. When $X^\epsilon=0$ it is said by convention that the resistance is $\pinfty$.
    A \ac{pmp} $(P^\epsilon)$ is a \ac{rpmp} if all transition probabilities $P^\epsilon_{x,y}$ are regular.
\end{definition}

Intuitively, the regularity condition implies that $P^\epsilon_{x,y}$ behaves like $\epsilon^r$ for small $\epsilon$.
Given a \ac{rpmp}, one can define a resistance graph over its state space, where directed edges are equipped with the resistance of the associated transition.
The study of the resistance graph allows to describe the behavior of the perturbed process when $\epsilon$ goes to $0$.
\begin{definition}[Resistance Graph]
\label{def:resistance}
Let $(P^\epsilon)$ be a \ac{rpmp}.
\begin{itemize}
    \item The resistance of a transition $P^\epsilon_{x,y}$ is denoted $r(x\to y)$.
    \item Let $\cG$ be the directed graph over the vertex space $\cX$ where there is a directed edge of weight $r(x\to y)$ from $x$ to $y$ when the resistance is finite.
    \item An edge $x\to y$ is said to be an \emph{easy edge} if it minimizes the resistance among all outwards edges from $x$, that is if $r(x\to y) = \min_z r(x\to z)$. The associated minimal resistance is called \emph{outward resistance} and denoted $r^\star(x) = \min_z r(x\to z)$.
    \item The resistance of a path $x\leadsto y$ in $\cG$ is the sum of the resistances of its edges.
\end{itemize}
\end{definition}

The resistance $r(x\to y)$ illustrates how unlikely the transition from $x$ to $y$ becomes as the perturbation factor vanishes.
Notice that as $\epsilon\to0$, $P^\epsilon_{x,y}$ converges to a positive value if and only if $r(x\to y)=0$. Hence a transition exists in the unperturbed process $P^0$ if and only if its resistance is equal to $0$.
Moreover, since all non-zero transition probabilities have finite resistance, $P^\epsilon$ is irreducible for all $\epsilon>0$ if and only if there is a finite-resistance path between any two states in $\cG$.
The tendency of the process to remain in a state $x$ can be measured through a notion of potential. The higher its potential, the more difficult it is to reach the state, or the easier it is to leave.
\begin{definition}[Potential]
\label{def:potential}
\hfill
\begin{itemize}
    \item A $x$-\emph{tree} is a spanning tree of the graph $\cG$ rooted at $x$, \ie, an acyclic sub-graph of $\cG$ such that every vertex $y\ne x$ has a unique outgoing edge and $x$ has none.
    \item The resistance of a $x$-tree is the sum of the resistances of its edges. The tree is said to be minimal if it minimizes the resistance among all $x$-trees.
    \item The \emph{potential}\footnote{The potential $\gamma$ is denoted $\rho$ and called \emph{stochastic potential} in \cite{TEL}.} $\gamma(x)$ of a vertex $x$ is the minimal resistance of a $x$-tree.
\end{itemize}
\end{definition}

Suppose that $P^\epsilon$ is aperiodic and irreducible for all $\epsilon>0$. With this framework, it is then shown that the stationary distributions $\pi^\epsilon$ converge toward a stationary distribution $\pi^0$ of $P^0$ \cite[Lemma 1]{young93} as the perturbation factor $\epsilon$ vanishes. Moreover, $\pi^0_x>0$ if and only if $x$ minimizes the potential $\gamma$ among all states. In other words, such states are exactly the \acp{sss} of the process as defined in \cref{def:stochastically_stable}.

Notice that two states communicate in the unperturbed process $P^0$ if and only if there is a path of zero resistance from one to another and back in the perturbed process. Moreover, a communication class is recurrent in $P^0$ if and only if all edges leaving the class have positive resistance.
Now, consider two states $x$ and $y$ such that $r(x\to y)=0$. A $x$-tree can be modified into a $y$-tree by removing the edge leaving $y$ and adding the edge $x\to y$. Since $r(x\to y)=0$, the obtained $y$-tree has smaller resistance than the original $x$-tree.
It follows that potential is constant over communication classes \cite[Lemma 2]{young93}.
Moreover, a non-recurrent communication class cannot minimize the potential as it has a higher potential then any other class that can be reached from it with a path of zero resistance.

For this reason, an efficient way to compute potentials is to consider an aggregated version of $\cG$ over the communication classes of $P^0$. The resistance of an edge between two classes $X$ and $Y$ is defined as the minimum resistance of a path from any state of $X$ to any state of $Y$. Note that a class of $P^0$ is recurrent if and only if its minimal outward resistance $r^\star$ is nonzero in the aggregated graph. Potential is then defined the same way as in $\cG$.
Since we seek to identify the \acp{sss}, it is not necessary to compute the potential of non-recurrent classes, as they cannot minimize it.
This whole reasoning is detailed in \cite{young93} and eventually leads to the following result.
\begin{theorem}[\cite{young93}, Lemma 1]
    \label{thm:sss_min_gamma}
    Let $(P^\epsilon)$ be a \ac{rpmp} over a finite space $\cX$ such that $P^\epsilon$ is aperiodic and irreducible for all $\epsilon>0$ and denote $\pi^\epsilon$ its stationary distribution.
    Then $\pi^\epsilon$ converges to a stationary distribution $\pi^0$ of $P^0$ as $\epsilon\to0$. Moreover, $\pi^0_x>0$ if and only if $x$ belongs to a recurrence class that minimizes $\gamma$.
    In particular, the set of \acp{sss} is
    \[\cS
    = \cX^\star
    \eqdef \bLa x\in\cX~:~\gamma(x) = \min_\cX\gamma\bRa~.\]
\end{theorem}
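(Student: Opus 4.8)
The natural route is through the Markov Chain Tree Theorem, which expresses the stationary distribution of an irreducible finite chain in terms of its spanning in-trees. For each fixed $\epsilon>0$ the chain $P^\epsilon$ is aperiodic and irreducible, so it admits the unique stationary distribution $\pi^\epsilon$, and the theorem gives
\begin{equation*}
    \pi^\epsilon_x = \frac{\sum_{T\in\mathcal{T}_x}\prod_{(u\to v)\in T}P^\epsilon_{u,v}}{\sum_{y\in\cX}\sum_{T\in\mathcal{T}_y}\prod_{(u\to v)\in T}P^\epsilon_{u,v}}~,
\end{equation*}
where $\mathcal{T}_x$ is the set of $x$-trees from \cref{def:potential} and $(u\to v)\in T$ ranges over the directed edges of $T$. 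The plan is to extract the $\epsilon\to0$ asymptotics of each tree weight $\prod_{(u\to v)\in T}P^\epsilon_{u,v}$ and read off from this ratio which states survive in the limit.

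First I would apply the regularity condition of \cref{def:regularity}: each finite-resistance transition satisfies $P^\epsilon_{u,v}\sim c_{u,v}\,\epsilon^{r(u\to v)}$ for some constant $c_{u,v}>0$, while infinite-resistance transitions vanish identically and hence annihilate any tree that uses them. Consequently only trees contained in the resistance graph $\cG$ contribute, and the weight of such a tree $T$ behaves like $\bigl(\prod_{(u\to v)\in T}c_{u,v}\bigr)\epsilon^{r(T)}$, where $r(T)=\sum_{(u\to v)\in T}r(u\to v)$ is the resistance of $T$. Since all leading coefficients are strictly positive, no cancellation occurs on summing over trees, so the numerator is equivalent to $C_x\,\epsilon^{\gamma(x)}$ with $C_x>0$ and exponent exactly the minimal tree resistance $\gamma(x)$ of \cref{def:potential}; likewise the denominator is equivalent to $C\,\epsilon^{\gamma^\star}$ with $C>0$ and $\gamma^\star=\min_{y\in\cX}\gamma(y)$.

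Dividing gives $\pi^\epsilon_x\sim (C_x/C)\,\epsilon^{\gamma(x)-\gamma^\star}$, so $\pi^\epsilon_x$ converges to a strictly positive limit when $\gamma(x)=\gamma^\star$ and to $0$ otherwise. This defines the limiting vector $\pi^0$, which is a genuine probability distribution since the limits of the $\pi^\epsilon_x$ sum to $1$; passing to the limit in $\pi^\epsilon P^\epsilon=\pi^\epsilon$ and using $P^\epsilon\to P^0$ shows $\pi^0$ is stationary for $P^0$. This already yields $\pi^0_x>0\iff\gamma(x)=\gamma^\star$. To match the statement about recurrence classes, I would invoke the two facts recalled just before the theorem: $\gamma$ is constant on each communication class of $P^0$, and a non-recurrent class has strictly larger potential than some class reachable from it by a zero-resistance path, hence cannot attain $\gamma^\star$. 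Thus every minimizer of $\gamma$ lies in a recurrence class, and $\{x:\pi^0_x>0\}$ is precisely the union of the minimal-potential recurrence classes, which is exactly $\cX^\star$.

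Finally, since $P^\epsilon$ is aperiodic and irreducible, \cref{def:stochastically_stable} characterises the \acp{sss} as those $x$ with $\limsup_{\epsilon\to0}\pi^\epsilon_x>0$; by the asymptotics above this holds iff $\gamma(x)=\gamma^\star$, giving $\cS=\cX^\star$. The one step demanding genuine care is the asymptotic analysis of the tree-sums: one must argue that a finite sum of terms, each equivalent to a positive multiple of a power of $\epsilon$, is itself equivalent to the lowest such power. This is precisely where the positivity of the regularity coefficients is indispensable—it rules out cancellation between trees of equal resistance—and it is the technical crux on which the whole argument rests.
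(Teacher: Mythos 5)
Your proposal is correct and takes essentially the same route as the paper's own (cited) proof: the paper defers to Young's Lemma~1, whose argument---explicitly recalled in the proof of \cref{thm:sss_min_gamma_AR}---is exactly this one, namely the Markov chain tree theorem expressing $\pi^\epsilon_x$ as a ratio of tree sums $p^\epsilon_x$, followed by the observation that each $p^\epsilon_x$ is regular with resistance $\gamma(x)$ because the positive leading coefficients preclude cancellation. Your identification of the no-cancellation step as the technical crux, and your use of the facts recalled before the theorem (constancy of $\gamma$ on communication classes, non-minimality of transient classes) to pass to the recurrence-class formulation, match the paper's framework precisely.
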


From there, the convergence of a \ac{rpmp} is shown by computing its potential and identifying the recurrence classes that minimize it.
In the context of the following algorithms, $P^\epsilon$ either is aperiodic and irreducible so that \cref{thm:sss_min_gamma} applies, or contains absorbing states which are the \acp{sss}.

\subsection{Perturbed Distributed Learning}
\label{sec:pdl}

Let us introduce the notion of \ac{pdl}, which encompasses the general reasoning of all the following algorithms.
The main idea of a \ac{pdl} algorithm is to attribute moods to each agent along with a reference---or benchmark---action and utility. Agents behave differently based on their state, \ie, their mood $m$, benchmark action $\la$ and benchmark utility $\lu$.
In a \emph{discontent} (denoted \discontent{} for short) state, an agent systemically explores a new strategy and may accept it depending on the quality of the outcome.
In a \emph{content} (\content) state, an agent generally keeps playing its benchmark action but may occasionally explore another strategy and accept it if it sees an improvement.
Two intermediate moods, \emph{hopeful} (\hopeful) and \emph{watchful} (\watchful), are specific to \ac{tel} and its variants discussed here and allow to identify Nash equilibria, which is not done in \ac{odl}.
A step of any \ac{pdl} algorithm plays out as follows. Each agent:
\begin{enumerate}
    \item Chooses an action $a$ to play based on an \emph{action policy} taking into account its state $(\lm,\la,\lu)$,
    \item Observes a utility $u$ resulting from the choice of all actions $\ba$,
    \item Updates its state according to an \emph{update policy} taking into account its state $(\lm,\la,\lu)$ along with $a$ and~$u$.
\end{enumerate}
Regarding the update policy, we list here the different behaviors an agent can adopt to update its state $(\lm,\la,\lu)$:
\begin{itemize}
    \item Accept : Go to state $(\content, a, u)$, \ie, accept the outcome and commit to the observed action and utility.
    \item Revert : Go to state $(\content, \la, \lu)$, \ie, remain in the previous state. This is only used from content states.
    \item Reject : Go to state $(\discontent, a, u)$, \ie, reject the outcome and become discontent.
    \item Adopt intermediate mood $m$: Go to state $(m, \la, \lu)$, \ie, keep the same benchmark and wait for another round before making a decision. The intermediate mood $m$ is either \hopeful{} or \watchful.
\end{itemize}
The main structure of any \ac{pdl} algorithm described above is summarized in \cref{fig:PDL_structure}.
A specific \ac{pdl} algorithm is determined by its action and update policies.
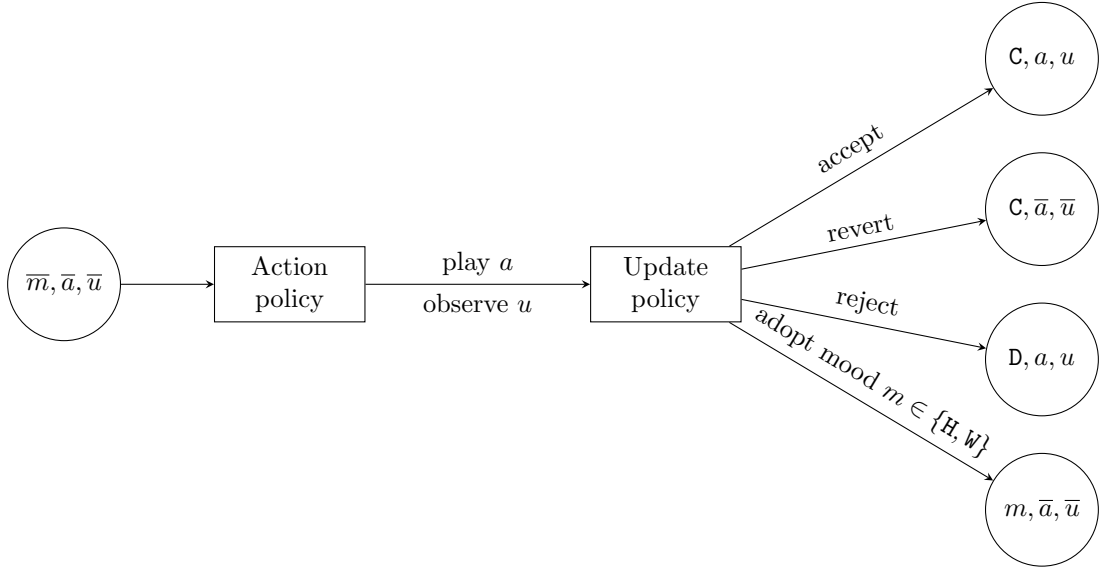
\begin{figure}[ht]
\centering
\begin{tikzpicture}[>=stealth]
    \node[circle, draw, minimum size=1.5cm] (0) at (0,0) {$\lm,\la,\lu$};
    \node[rectangle, draw, minimum width=2cm, minimum height=1cm, align=center] (1) at (3,0) {Action\\policy};
    \node[rectangle, draw, minimum width=2cm, minimum height=1cm, align=center] (2) at (8,0) {Update\\policy};
    \node[circle, draw, minimum size=1.5cm] (b1) at (13,3) {$\cont,a,u$};
    \node[circle, draw, minimum size=1.5cm] (b2) at (13,1) {$\cont,\la,\lu$};
    \node[circle, draw, minimum size=1.5cm] (b3) at (13,-1) {$\disc,a,u$};
    \node[circle, draw, minimum size=1.5cm] (b4) at (13,-3) {$m,\la,\lu$};
    
    \draw[->] (0) -- (1);
    \draw[->] (1) -- (2) node[midway, above] {play $a$} node[midway, below] {observe $u$};
    \draw[->] (2) -- (b1) node[midway, sloped, above] {accept};
    \draw[->] (2) -- (b2) node[midway, sloped, above] {revert};
    \draw[->] (2) -- (b3) node[midway, sloped, above] {reject};
    \draw[->] (2) -- (b4) node[midway, sloped, above] {adopt mood $m\in\{\hope,\watch\}$};
\end{tikzpicture}
\caption{\ac{pdl} Structure.}
\label{fig:PDL_structure}
\end{figure}

In the following, policies are described by tables where each entry describes the decision of an agent with the convention that the agent is in state $(\lm,\la,\lu)$, plays $a$, and observes $u$.
Given $\lm,\la,\lu,a,u$, the decision of the agent is always either deterministic or random involving constant probabilities or probabilities of the form $\epsilon^r$ for some $r\ge0$.
Such policies are therefore functions of $\epsilon$ and designed in a way such that the algorithm behaves as a \ac{rpmp} over the space of all possible states $(\blm,\bla,\blu) = (\lm_i,\la_i,\lu_i)_{i\in I}$.
Indeed, transition probabilities can be expressed as linear combinations of regular quantities, and are therefore regular, satisfying \cref{eq:def:regularity}. Such transition probabilities also converge to a Markov chain which we refer to as the unperturbed process $P^0$ like in \cref{def:pmp}.
From now on we use the \ac{rpmp} theoretical framework of resistances and potentials.

\section{Deterministic Games -- ITEL}
\label{sec:deterministic}

In this section, perceived utilities $U_i(\ba)$ are assumed to be deterministic and bounded within $[0,1]$.
We introduce \ac{itel} in \cref{sec:itel} and analyze its convergence in \cref{sec:itel:results}, leading to our convergence statement in \cref{thm:itel:convergence}.

Let us first discuss a necessary assumption on the underlying game for our analysis to hold.
In a general game, it is possible that an agent $i$ maximizes its utility by playing an action that is detrimental to the other agents. If the other agents have no way to influence $i$ out of playing this action, then it is not possible to design a distributed algorithm that does not get stuck in a sub-optimal state in these situations, without forcing agents to possibly leave a seemingly optimal situation from their point of view. For this reason, the following condition is introduced in \cite{TEL} and is also used in this work.
\begin{assumption}[Interdependence]
\label{assum:interdependence}
The game is \emph{interdependent}, that is, given any action profile $\ba$ and any proper subset $\emptyset\subsetneq J\subsetneq I$ of agents, there exists an agent $i\notin J$ and a choice of actions $a'_J$ such that $U_i(a'_J,a_{-J}) \ne U_i(\ba)$.
\end{assumption}
The interdependence assumption ensures that for any partition of the agents into two groups, there exists a way for the agents of the first group to play that influences the utilities of at least one agent in the second group.

\subsection{Definition of ITEL}
\label{sec:itel}

\ac{itel} is mainly motivated by the following observation: In \ac{tel}, agents that receive an optimal utility of $1$ have a nonzero probability of trying other actions despite the fact that they can only lower their utility by doing so.
The action and update policies of \ac{itel} are given in \cref{tab:itel:policies}.
Given its mood, whether it explored or not, and how it performed compared to its benchmark, the agent behaves according to the corresponding line.
Behavior are either deterministic or a random choice between two behaviors.
Recall that $\discontent$, $\content$, $\hopeful$ and $\watchful$ refer to the moods discussed in \cref{sec:pdl} and that the signification of behaviors ``accept'', ``revert'', etc. are specified in \cref{fig:PDL_structure}.
When exploring, the action is chosen uniformly among all possible actions.

\begin{table}[ht]
    \centering
    
    \begin{subtable}[b]{\textwidth}
        \centering
        \begin{tabular}{||c|c||l||}
            \hhline{|t:===:t|}
            \textbf{Mood} & \textbf{Utility} & \multicolumn{1}{c||}{\textbf{Decision}} \\
            \hhline{||=|=#=||}
            \discontent & / & explore any $a$ \\
            \hhline{||-|-|-||}
            \multirow{2}{*}\content &
            $\lu<1$ & explore $a\ne\la$ with probability $\epsilon$, else play $\la$ \\
            \hhline{||~|-|-||}
            & $\lu=1$ & play $\la$ \\
            \hhline{||-|-|-||}
            \hopeful & / & play $\la$ \\
            \hhline{||-|-|-||}
            \watchful & / & play $\la$ \\
            \hhline{|b:===:b|}
        \end{tabular}
        \caption{Action Policy.}
        \label{tab:itel:action_policy}
    \end{subtable}
    
    \begin{subtable}[b]{\textwidth}
        \centering
        \begin{tabular}{||c|c|c||l||}
            \hhline{|t:====:t|}
            \textbf{Mood} & \textbf{Action} & \textbf{Utility} & \multicolumn{1}{c||}{\textbf{Decision}} \\
            \hhline{||=|=|=#=||}
            \discontent & / & / & accept with probability $\epsilon^{F(u)} \wedge c_F$, else reject \\
            \hhline{||-|-|-|-||}
            \multirow{5}{*}\content &
            \multirow{2}{*}{$a\ne\la$} & $u>\lu$ & accept with probability $\epsilon^{G(\lu,u)}$, else revert \\
            \hhline{||~|~|-|-||}
            & & $u\le\lu$ & revert \\
            \hhline{||~|-|-|-||}
            & \multirow{3}{*}{$a=\la$} & $u>\lu$ & become \hopeful \\
            \hhline{||~|~|-|-||}
            & & $u=\lu$ & revert \\
            \hhline{||~|~|-|-||}
            & & $u<\lu$ & become \watchful \\
            \hhline{||-|-|-|-||}
            \multirow{3}{*}{\hopeful} & \multirow{3}{*}{/} & $u>\lu$ & accept \\
            \hhline{||~|~|-|-||}
            & & $u=\lu$ & revert \\
            \hhline{||~|~|-|-||}
            & & $u<\lu$ & become \watchful \\
            \hhline{||-|-|-|-||}
            \multirow{3}{*}{\watchful} & \multirow{3}{*}{/} & $u>\lu$ & become \hopeful \\
            \hhline{||~|~|-|-||}
            & & $u=\lu$ & revert \\
            \hhline{||~|~|-|-||}
            & & $u<\lu$ & reject \\
            \hhline{|b:====:b|}
        \end{tabular}
        \caption{Update Policy.}
        \label{tab:itel:update_policy}
    \end{subtable}
    
    \caption{\ac{itel} Policies.}
    \label{tab:itel:policies}
\end{table}

In the update policy of \ac{itel}, $F$ is a non-negative non-increasing function of the observed utility $u$ and controls the probability of acceptation from a discontent state.
The probability of accepting an observation from a discontent state is clipped at $c_F\in(0,1)$, which is meant to be close to $1$, for technical purpose. This ensures that there is always a positive probability $1-c_F$ of rejecting the outcome of a discontent observation.
This way, the resistance of accepting $u$ from a discontent state is always equal to $F(u)$ and the resistance of rejecting it is always equal to $0$.
The probability of acceptation from a content state is controlled by $G$ which is a non-negative function of $(\lu,u)$. It is assumed that $G$ is non-decreasing in $\lu$ and non-increasing in~$u$.
The behaviors of agents under \ac{itel} can be described as follows:
\begin{itemize}
    \item A discontent agent explores at random any action. Depending on the outcome $u$, it accepts this action and becomes content with resistance $F(u)$, else remains discontent.
    \item A content agent explores an action different from its benchmark with resistance $1$, unless it already maximizes its benchmark utility. If it does explore, it may accepts the new action only if it performs better than its benchmark utility, and with resistance $G(\lu,u)$.
    \item When a content agent does not explore, it may still notice a different utility than its benchmark due to other agents exploring different actions. In this case, the agent becomes hopeful or watchful depending on the change being an improvement or a deterioration.
    If its utility goes back to normal on the following round, the agent reverts back to its original state. Else, if an improvement is confirmed the agent accepts it and update its benchmark. Finally, if a deterioration is confirmed the agent rejects it and become discontent.
\end{itemize}

Notice that a content agent with optimal utility is bound to its state as it never explores. We call such agents \emph{optimized}, and states were all agents are optimized are referred to as \emph{optimal states}.
This behavior is not present in the original formulation of \ac{tel}, and is the reason for the presence of absorbing states if there exist optimal configurations maximizing the utility for all agents.
Moreover, the functions $F$ and $G$ can be equal to $0$, typically for $u=1$, so that agents observing sufficiently high utilities have no resistance to accept it. This is allowed by the more general form of $G$ as a dual-input function instead of a function of $u-\lu$ as imposed in \cite{TEL}, so that we can have $G(\lu,1)=0$ regardless of $\lu$.
The entire process of the \ac{itel} algorithm is summarized in \cref{alg:itel}.

\begin{algorithm}[ht]
\caption{\ac{itel}}
\label{alg:itel}
\begin{algorithmic}
\STATE
\STATE Initialize at any state $(\blm,\bla,\blv)$
\FOR{iterations $t=1,2,\dots$}
    \STATE $a_i \gets \text{ACTION}(\lm_i,\lv_i)$ according to \cref{tab:itel:action_policy} \textbf{for} $i=1,\dots,n$
    \STATE $u_i \gets \text{SAMPLE}(\ba,i)$ \textbf{for} $i=1,\dots,n$
    \STATE $(\lm_i,\la_i,\lv_i) \gets \text{UPDATE}(\lm_i,\la_i,\lu_i,a_i,u_i)$ according to \cref{tab:itel:update_policy} \textbf{for} $i=1,\dots,n$
\ENDFOR
\end{algorithmic}
\end{algorithm}

\subsection{Theoretical Analysis of ITEL}
\label{sec:itel:results}

We now describe the resistance graph of \ac{itel}.
Recall that its states are triplets $(\blm,\bla,\blu) = (\lm_i,\la_i,\lu_i)_{i\in I}$ describing the states of all agents at a given time in the algorithm. The set of all possible states is denoted $\cX$.
Notice that a discontent agent's behavior does not depend on its benchmark $\la$ and $\lu$, hence we reduce a discontent agent's state to its discontent mood.
In particular, the set of states where every agent is discontent is identified to a single state we denote $D$.
Recall that our goal is to identify the \acp{sss} $\cS$, which are exactly the states $\cX^\star$ that minimize the potential $\gamma$ according to \cref{thm:sss_min_gamma} in the case where the perturbed process $P^\epsilon$ is aperiodic and irreducible.

Due to intermediate moods and discontent agents playing at random, the benchmark utilities of a state do not always correspond to the utilities resulting from the benchmark action profile. The states in which they do correspond are of particular interest as we show that they constitute the recurrent states of the unperturbed process in \cref{prop:itel:recurrence_P_0}.

\begin{definition}[Aligned States]
    \label{def:aligned}
    an agent with benchmark utility $\lu$ is \emph{aligned} with an action profile $\ba$ if $\lu=U(\ba)$.
    A state $x\in\cX$ is said to be aligned if all agents are in a content mood and aligned with the benchmark actions.
\end{definition}

Denote $\cC\subset\cX$ the subset of aligned states.
\ac{itel} is designed so that content agents tend to align with their benchmark action profile when the perturbation factor $\epsilon$ is close to $0$.
\begin{proposition}
    \label{prop:itel:recurrence_P_0}
    The recurrence classes of the unperturbed process $P^0$ are the singletons $\{x\}$ for each aligned states $x\in\cC$, and possibly the communication class of the all-discontent state $D$.
\end{proposition}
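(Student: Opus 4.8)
The plan is to work entirely in the unperturbed chain $P^0$, whose edges are exactly the resistance-$0$ transitions. First I would read these off from the policy tables (\cref{tab:itel:action_policy,tab:itel:update_policy}): as $\epsilon\to0$ the content-exploration probability $\epsilon$ and the acceptance probabilities $\epsilon^{F(u)},\epsilon^{G(\lu,u)}$ with positive exponent all vanish, so in $P^0$ every non-discontent agent plays its benchmark action $\la_i$; a content, hopeful or watchful agent then updates its mood deterministically by comparing the observed $u_i$ with $\lu_i$ (a hopeful agent accepting and updating its benchmark utility to $u_i$ when $u_i>\lu_i$, a watchful agent rejecting to discontent when $u_i<\lu_i$); and a discontent agent explores uniformly, may always reject with the positive probability $1-c_F$, and accepts only outcomes with $F(u)=0$. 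From this it is immediate that an aligned state is a fixed point of $P^0$: all agents play $\bla$, each observes $u_i=U_i(\bla)=\lu_i$, and everyone reverts, so each aligned state is absorbing, hence a singleton recurrence class; the same computation shows conversely that the fixed points of $P^0$ are exactly the aligned states.

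The crux, which I expect to be the main obstacle, is a reachability lemma: from any state carrying at least one discontent agent there is a resistance-$0$ path to the all-discontent state $D$. The idea is that discontent agents are freely controllable — they explore any action with positive constant probability and can always remain discontent by rejecting — so they serve as levers to destabilise the remaining agents one at a time. Writing $K$ for the current nonempty discontent set with $K\neq I$, the complement $J=I\setminus K$ is a proper nonempty subset, and I would apply \cref{assum:interdependence} to $K$: it yields an agent $i\in J$ for which $U_i(\cdot,\bla_J)$ is non-constant in $K$'s actions, with minimum $m_i$ and maximum $M_i>m_i$ (the $J$-agents keep playing the fixed profile $\bla_J$ since no benchmark action ever changes while $K$ stays discontent). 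I would then drive $i$ to discontent by a raise-then-lower maneuver. If $\lu_i>m_i$, holding $K$ at a minimiser for two consecutive rounds makes $i$ observe $m_i<\lu_i$ twice and march through the moods of \cref{tab:itel:update_policy} to reject, becoming discontent. If instead $\lu_i\le m_i$, I would first hold $K$ at a maximiser for two rounds so that $i$ turns hopeful and then accepts, raising its benchmark to $\lu_i:=M_i>m_i$, after which the previous case applies. Either way $|K|$ strictly increases through resistance-$0$ transitions, and iterating reaches $D$. The delicate points to verify are that this two-round sustain is consistent with the mood transitions whatever $i$'s initial mood, and that the other $J$-agents changing moods meanwhile is harmless precisely because they never alter $\bla_J$.

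Finally I would dispose of the states with no discontent agent. There all agents play $\bla$ and observe $v_i=U_i(\bla)$; comparing $\lu_i$ with $v_i$, any agent with $\lu_i<v_i$ becomes hopeful and then accepts, realigning to $v_i$, while any agent with $\lu_i>v_i$ becomes watchful and then discontent. Hence if all $\lu_i\le v_i$ the state reaches the aligned state with profile $\bla$ in at most two steps, and otherwise it reaches a state with a discontent agent, which by the lemma reaches $D$. Combining the two cases, every state admits a resistance-$0$ path to an aligned state or to $D$. To conclude I would reason on the closed communication classes, which are the recurrence classes of $P^0$: a closed class containing a non-aligned state cannot reach an absorbing aligned state without collapsing to that singleton, so any closed class other than an aligned singleton must contain a discontent-agent state, whence by the lemma it contains $D$ and therefore equals the communication class of $D$; and conversely each aligned singleton is itself closed. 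This gives exactly the stated recurrence classes, the class of $D$ arising only when it is itself closed, i.e.\ when $P^0$ admits no resistance-$0$ escape from $D$ to an aligned state.
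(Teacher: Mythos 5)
Your proposal is correct and follows essentially the same route as the paper's proof: aligned states are absorbing in $P^0$, interdependence lets the discontent agents act as levers that drive the remaining agents to discontentment one at a time via a raise-then-disappoint maneuver (your Case 2 mirrors the paper's ``accept the improvement, then revert the actions'' step in \cref{lemma:itel:path_to_D}), and closedness of communication classes then yields exactly the stated recurrence classes. The only organizational difference is that the paper first reduces to an aligned state via \cref{lemma:itel:path_to_aligned} before deviating-and-reverting, whereas you handle arbitrary benchmarks and moods directly by steering the discontent set to a maximizer and then a minimizer of $U_i$ --- the same up-then-down trick.
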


\Cref{prop:itel:recurrence_P_0} is proven in \cref{sec:itel:proof_P_0}.
Recall that \ac{tel} is shown to favor Nash equilibria \cite[Theorem 1]{TEL}, for which the definition was recalled in \cref{def:equilibrium}. This is also the case for \ac{itel}.
An aligned state is said to be an equilibrium if its benchmark action profile $\bla$ is itself an equilibrium and we denote $\cE\subset\cC$ the subset of equilibrium states.
We say that an aligned state is optimal if all agents are optimized, \ie, have a benchmark utility of $1$. Since all agents then maximize their utility, such states are also equilibrium states and we denote $\cA\subset\cE$ the set of optimal states.
In an optimal state, all agents keep playing their benchmark action, hence the state is absorbing. We show that these states are in fact the only absorbing states. Moreover, if no such state exist, then the perturbed process is shown to be aperiodic and irreducible as in \ac{tel}.
\begin{proposition}
    \label{prop:itel:recurrence_P_eps}
    Let $\epsilon>0$.
    \begin{itemize}
        \item If $\cA\ne\emptyset$, states in $\cA$ are absorbing for the perturbed process $P^\epsilon$ and the recurrence classes of $P^\epsilon$ are exactly the singletons $\{x\}\subset\cA$.
        \item If $\cA=\emptyset$, $P^\epsilon$ is aperiodic and irreducible.
    \end{itemize}
\end{proposition}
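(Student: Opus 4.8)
## Proof Plan for Proposition~\ref{prop:itel:recurrence_P_eps}

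The plan is to analyze the two cases separately, but both rest on the same two mechanisms: \emph{optimized agents never explore}, which creates absorbing behavior, and \emph{discontent agents reach everything}, which is the engine of irreducibility. First I would establish the easy direction of the first bullet. If $x\in\cA$ is optimal, every agent is content with benchmark utility $1$, so by the action policy (the $\lu=1$ row of \cref{tab:itel:action_policy}) every agent deterministically plays $\la_i$, yielding exactly the utility profile of $\bla$, namely all ones. Consulting the update policy, a content agent that plays $a=\la$ and observes $u=\lu$ reverts, so the state is unchanged. Thus $P^\epsilon_{x,x}=1$ and $x$ is absorbing. This holds for every $\epsilon$, perturbation or not.

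The harder part of the first bullet is showing these are the \emph{only} recurrence classes when $\cA\neq\emptyset$, \ie, that from any non-optimal state the process reaches $\cA$ with positive probability. The natural route is to show that from any state one can reach the all-discontent state $D$ with positive probability, and from $D$ one can reach some optimal state. For the first leg, I would argue that any non-optimized content agent explores with probability at least $\epsilon$ and can be driven discontent: an exploring content agent whose new action lowers its utility reverts, but a non-exploring content agent that witnesses a drop becomes watchful, then discontent if the drop is confirmed. Orchestrating simultaneous explorations—using interdependence (\cref{assum:interdependence}) to guarantee that perturbing a non-trivial subset of agents actually changes someone's utility, breaking alignment—lets me cascade agents into discontent and eventually reach $D$. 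For the second leg, from $D$ all agents explore uniformly, so with positive probability they jointly play an optimal profile $\bla^\star\in\cA$ (which exists by hypothesis), observe all-ones utilities, and each accepts with probability $\epsilon^{F(1)}\wedge c_F>0$, landing in that optimal state. Since optimal states are absorbing, no recurrence class other than the singletons in $\cA$ can exist.

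For the second bullet, assume $\cA=\emptyset$. Irreducibility follows the same skeleton: I would show every state communicates with $D$, and $D$ communicates with every state. Reaching $D$ from an arbitrary state uses the cascade-into-discontent argument above, which now must work \emph{without} assuming an optimal profile exists—here interdependence is essential, since it guarantees that for any non-trivial partition the exploring group can perturb an outsider's utility and thereby prevent the process from being trapped in a proper aligned subconfiguration. Conversely, from $D$ every agent explores uniformly and accepts with positive probability, so $D$ reaches any target benchmark profile and hence any aligned state, and intermediate moods are reachable by subsequent single-round perturbations; this gives a positive-probability path from $D$ to any state. Combining the two directions yields irreducibility. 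Aperiodicity I would obtain by exhibiting a self-loop: since $\cA=\emptyset$, in particular $D$ is not absorbing, but there is still positive probability of \emph{remaining} discontent—a discontent agent rejects with probability at least $1-c_F>0$—so $P^\epsilon_{D,D}>0$, giving period~$1$. The main obstacle throughout is the first bullet's cascade argument: carefully choreographing a positive-probability sequence of joint explorations and confirmations that drives a mixed content/intermediate-mood configuration down to $D$, making correct use of interdependence to ensure alignment can always be broken, is the delicate combinatorial heart of the proof.
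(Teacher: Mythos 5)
Your outline for the absorbing part of the first bullet, the leg $D\to\cA$, and the aperiodicity argument ($P^\epsilon_{D,D}>0$ via the rejection probability $1-c_F$) all agree with the paper. However, the central claim your route rests on for the first bullet is false: when $\cA\ne\emptyset$ it is \emph{not} true that every non-optimal state can reach $D$ with positive probability. Concretely, consider a state in which agents $2,\dots,n$ are content with benchmark utility $1$ and aligned, while agent $1$ is \hopeful{} with benchmark action $\la_1$, benchmark utility $\lu_1<1$, and actual utility $U_1(\bla)=1$. By the action policy every agent plays its benchmark action with probability $1$ (optimized content agents and hopeful agents never explore), so all agents observe utility $1$; by the update policy agent $1$ then accepts and the others revert. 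This non-optimal state therefore transitions \emph{deterministically} into the optimal state with benchmarks $\bla$ and utilities all $1$, which is absorbing, so $D$ is unreachable from it. A milder form of the same obstruction appears even from aligned non-equilibrium states: an exploring agent can be \emph{forced} to accept an improvement (when $G(\lu,u)=0$, which the paper explicitly allows, e.g.\ $G(\lu,1)=0$), so a path one tries to steer towards $D$ may instead drift upward into $\cA$. The paper never claims that all states reach $D$; instead it shows (\cref{lemma:itel:r_x}) that every easy edge out of an aligned state leads either to $D$ or to an aligned state of \emph{strictly larger welfare}, so iterating easy edges cannot cycle and must terminate in $D$ or in $\cA$; combined with $r(D\to x)=1-\tW(x)<\pinfty$ (\cref{lemma:itel:r_D}), this weaker statement is exactly what both bullets need. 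Your plan would have to be restructured along these lines (or by first reducing any state to an aligned-or-discontent state and accepting that the reduction may land in $\cA$), since the lemma you propose to prove does not hold.

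Two secondary points. First, your cascade only treats witnesses of a \emph{drop}; when all agents are non-optimized, interdependence may only provide an agent whose utility \emph{rises}, and then you need the mechanism of \cref{lemma:itel:path_to_D}: the witness becomes hopeful, accepts the higher utility as its new benchmark, and becomes \discontent{} only after the explorer reverts and the witness twice sees its old, now inferior, utility. You must also apply interdependence to the right subset, namely the set of non-optimized agents, since optimized agents cannot explore. Second, your assertion that $D$ reaches \emph{every} state (``intermediate moods are reachable by subsequent single-round perturbations'') is stronger than what holds or is needed; the paper only shows that $D$ communicates with all of $\cC$ and handles possible leftover transient states by restricting to the unique recurrence class (see the footnote in \cref{sec:itel:proof_res}), which is the honest way to state irreducibility here.
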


\Cref{prop:itel:recurrence_P_eps} is proven in \cref{sec:itel:proof_res}.
In the first case, we immediately conclude that the \acp{sss} are $\cS = \cA$ and the perturbed process converges \ac{as} to an optimal state.
In the second case, the \ac{rpmp} framework applies and we resort to using \cref{thm:sss_min_gamma} to identify the \acp{sss}.
\Cref{prop:itel:recurrence_P_0} implies that the resistance graph can be reduced to the states $D$ and $\{x\}$ for all $x\in\cC$.
As for \ac{tel}, the computation of resistances and potentials involves welfare and stability of the state, for which we recall the definition.
\begin{definition}[Welfare and Stability]
    \label{def:itel:W&S}
    Let $x=(\blm,\bla,\blu)\in\cC$. We define
    \begin{itemize}
        \item $W(x) \eqdef \sum_i\lu_i$,
        \item $S(x) \eqdef \max \bLa U_i(a_i,\la_{-i})-\lu_i ~|~ \textrm{$a_i\ne\la_i$ such that $U_i(a_i,\la_{-i}) > \lu_i$}\bRa$.
    \end{itemize}
    $S(x)$ is defined only if $x$ is not an equilibrium.
\end{definition}
The welfare $W$ of a state is the welfare of its benchmark action profile, which coincides with the sum of its benchmark utilities if the state is aligned. The stability $S$ of a non-equilibrium state is the largest improvement an agent may observe by changing its action. The higher $S$ is, the likelier an agent is to accept an observation. 
If $F$ and $G$ are respectively chosen as linear function of $u$ and $u-\lu$, the \acp{sss} depend directly on the welfare and stability functions $W$ and $S$ as it is the case for \ac{tel} \cite[Theorem 1]{TEL}.
For more general functions, computing the potentials involves the following quantities.
\begin{definition}[Virtual Welfare and Stability]
    \label{def:itel:tW&tS}
    Let $x=(\blm,\bla,\blu)\in\cC$. We define
    \begin{itemize}
        \item $\tW(x) \eqdef 1 - \sum_i F(\lu_i)$,
        \item $\tS(x) \eqdef 1 - \min \bLa G(\lu_i,U_i(a_i,\la_{-i})) ~|~ \textrm{$i\in I$ and $a_i\ne\la_i$ such that $U_i(a_i,\la_{-i}) > \lu_i$}\bRa$.
    \end{itemize}
    $\tS(x)$ is defined only if $x$ is not an equilibrium.
\end{definition}
In general, $\tW$ is non-decreasing in each individual utility $\lu_i$, hence behaves similarly to the original welfare $W$.
As for $\tS$, greater values indicate a larger possible improvement for an agent when exploring other actions, hence $\tS$ behaves similarly to the original stability $S$.
The particular case where $F$ and $G$ are linear functions is discussed at the end of the section.
In order to establish our result, some bounds on $\tW$ and $\tS$ are required, which are ensured by the following condition on the functions $F$ and $G$.
\begin{condition}[Bounds on resistance functions in \ac{itel}]
    \label{cond:itel}
    There exist constants $F_0$ and $G_0$ such that for all utilities $u$ and $\lu<u$,
    \begin{align*}
        \begin{cases}
            0 \le F(u) \le \frac{F_0}{n}\\
            0 \le G(\lu,u) < G_0\\
            F_0 + G_0 \le 1
        \end{cases}
    \end{align*}
\end{condition}

Under the above condition, it is possible to identify the \acp{sss} of \ac{itel} in the case where there are no optimal states.
\begin{theorem}
    \label{thm:itel:X_star}
     Assume that \cref{assum:interdependence} and \cref{cond:itel} hold and that $\cA=\emptyset$. Then the set $\cX^\star$ of states with minimum potential satisfies:
     \begin{itemize}
        \item If $\cE\ne\emptyset$, $\cX^\star = \arg\max_{x\in\cE} \tW(x)$.
        \item Else, $\cX^\star = \arg\max_{x\in\cC} \tW(x)-\tS(x)$.
    \end{itemize}
\end{theorem}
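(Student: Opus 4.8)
The plan is to apply Young's resistance-tree method through \cref{thm:sss_min_gamma}. Since $\cA=\emptyset$, \cref{prop:itel:recurrence_P_eps} guarantees that $P^\epsilon$ is aperiodic and irreducible, so \cref{thm:sss_min_gamma} identifies the \acp{sss} with the potential-minimizing states, i.e. $\cX^\star=\arg\min_\cX\gamma$. By \cref{prop:itel:recurrence_P_0}, the recurrence classes of $P^0$ are the aligned states $x\in\cC$ together with the class of $D$; hence it suffices to compute $\gamma$ on the aggregated resistance graph whose vertices are $\{D\}\cup\cC$.

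First I would compute the elementary resistances on this reduced graph. Reaching an aligned state $x$ from $D$ requires every discontent agent to explore its benchmark action (constant probability, zero resistance) and then accept, so that $r(D\to x)=\sum_i F(\lu_i)=1-\tW(x)$ by the definition of $\tW$. Conversely I would analyze the cheapest way to leave an aligned state. For a non-equilibrium $x$, an agent with a profitable deviation may explore it (resistance $1$) and then accept it (resistance $G(\lu_i,U_i(a_i,\la_{-i}))$), so the minimal outward move costs $1+\min_i G=2-\tS(x)$; for an equilibrium, no single deviation is profitable, and the cheapest destabilization must drive an agent along the mood sequence \content{}~$\to$~\watchful{}~$\to$~\discontent{}, which needs its utility depressed over two consecutive rounds and therefore costs resistance at least $2$. \cref{assum:interdependence} enters here to guarantee that such destabilizations propagate, so that $D$ is reachable from every class and the graph is connected.

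Then I would carry out the tree surgery. The key structural lemma to establish is that $D$ acts as a hub: since $D$ is reached from every aligned state at bounded cost and reaches each aligned $x$ at cost $1-\tW(x)\le F_0$, a minimal $z$-tree may be taken to route every $y\ne z$ toward $z$ through $D$. This reduces the comparison of potentials to a local formula of the shape $\gamma(z)=\mathrm{const}+(1-\tW(z))-r^{\mathrm{esc}}(z)$, where $r^{\mathrm{esc}}(z)$ is the outward resistance computed above. When $\cE=\emptyset$, substituting $r^{\mathrm{esc}}(z)=2-\tS(z)$ turns the formula into the maximization of $\tW(z)-\tS(z)$. When $\cE\ne\emptyset$, \cref{cond:itel} is exactly what forces the minimizers into $\cE$: the spread of $1-\tW$ across states is at most $F_0$, whereas an equilibrium is harder to leave than any non-equilibrium $y$ by a margin at least $\tS(y)>1-G_0\ge F_0$, the middle inequality being precisely $F_0+G_0\le1$; once the minimizers are confined to $\cE$, where $r^{\mathrm{esc}}$ is uniform, the formula collapses to maximizing $\tW$.

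The main obstacle is the tree-surgery step: rigorously proving that routing through $D$ is optimal, and pinning down the exact outward resistances—in particular the equilibrium case, where one must show, using the bounds of \cref{cond:itel}, that the escape resistance is uniformly large enough both to beat every non-equilibrium state and to render $\tW$ the sole discriminator among equilibria. This is the part where destabilization cascades to $D$ must be controlled and where the interplay between $F_0$, $G_0$ and the two-round \watchful{} mechanism has to be tracked carefully.
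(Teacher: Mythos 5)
Your plan follows the same skeleton as the paper's proof: collapse the chain to the recurrence classes $\{D\}\cup\cC$ of $P^0$, compute $r(D\to x)=1-\tW(x)$ and the outward resistances ($2$ for equilibria, $2-\tS(x)$ for non-equilibria), derive the hub formula $\gamma(x)=\gamma(D)-r^\star(x)+r(D\to x)$ by tree surgery around $D$, and conclude with the inequality $1-\tW<\tS$ forced by \cref{cond:itel} (your ``spread at most $F_0$ versus margin at least $1-G_0\ge F_0$'' comparison is exactly the paper's final computation in \cref{sec:itel:proof_res}).

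However, the step you flag as the main obstacle is a genuine gap, and the ideas needed to close it are absent from your proposal. The paper's proof of the hub formula (\cref{lemma:itel:potential}) rests on two arguments. First, to get $\gamma(D)=\sum_{x\in\cC}r^\star(x)$ one builds a $D$-tree using one easy edge per aligned state, and this requires ruling out cycles among those edges; the paper does this via the second claim of \cref{lemma:itel:r_x}: an easy edge out of a non-equilibrium state either goes to $D$ or lands on an aligned state of \emph{strictly larger welfare} $W$, so a cycle of easy edges would yield $W(x)<W(x)$. Your outward-resistance analysis never records where easy edges land, so acyclicity—hence the very existence of the cheap $D$-tree—is unjustified. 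Second, the matching lower bound (that \emph{every} $x$-tree costs at least $\gamma(D)-r^\star(x)+r(D\to x)$) needs the accounting argument of \cref{lemma:itel:potential}: along the path $D\to y_1\to\dots\to y_m\to x$ of an arbitrary $x$-tree, the resistances due to content agents exploring/accepting (which give the $r^\star(y_k)$) and those due to each agent's re-acceptance from a discontent state (which give $\sum_i F(\lu_i)$, using that a content agent cannot lower its benchmark without first passing through \discontent{} and that $F$ is non-increasing) are of disjoint natures and therefore add up rather than overlap. This same observation is also what turns your construction of $r(D\to x)$ into an equality—your proposal exhibits only the direct path, i.e., an upper bound. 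Without these two arguments the identity $\gamma(z)=\gamma(D)+(1-\tW(z))-r^\star(z)$, on which your entire final comparison rests, is unproven. (A minor further imprecision: at an equilibrium, destabilization need not go \content{}~$\to$~\watchful{}~$\to$~\discontent{}; an agent can equally be driven to \hopeful{} and then accept a higher benchmark. Both routes require two explorations, which is what actually yields $r^\star(x)=2$.)
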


Intuitively, Nash equilibria are particularly stable states. Indeed, no agent can explore by itself and accept the outcome, as this outcome would necessarily be a deterioration in utility by definition of equilibrium, so that it requires at least two agents to explore to leave such state.
\Cref{thm:itel:X_star} is proven in \cref{sec:itel:proof}. \Cref{prop:itel:recurrence_P_0,prop:itel:recurrence_P_eps} appear as key steps of the proof.
Combining \cref{prop:itel:recurrence_P_eps,thm:itel:X_star,thm:sss_min_gamma} yield the following.

\begin{theorem}[\ac{itel} convergence]
\label{thm:itel:convergence}
    Assume that \cref{assum:interdependence} and \cref{cond:itel} hold. Then:
    \begin{itemize}
        \item The \ac{itel} process converges \ac{as} to an optimal state.
        \item If there is no optimal state, then the \acp{sss} are equilibrium states maximizing $\tW$.
        \item If there is no equilibrium state, then the \acp{sss} are aligned states maximizing $\tW-\tS$.
    \end{itemize}
\end{theorem}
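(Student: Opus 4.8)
The plan is to prove the three bullets by splitting along the dichotomy of \cref{prop:itel:recurrence_P_eps}, according to whether optimal states exist. Since essentially all of the analytic work has already been done in \cref{prop:itel:recurrence_P_eps,thm:itel:X_star,thm:sss_min_gamma}, the task reduces to stitching these results together and reconciling the primitive definition of \acp{sss} in \cref{def:stochastically_stable} with its potential-based characterization.

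First I would treat the case $\cA\ne\emptyset$, which yields the first bullet. By \cref{prop:itel:recurrence_P_eps}, for every $\epsilon>0$ the only recurrence classes of $P^\epsilon$ are the singletons $\{x\}$ with $x\in\cA$, each of which is absorbing. Because $\cX$ is finite, any trajectory of a finite Markov chain enters a recurrence class \ac{as} in finite time; here this forces the process into some absorbing optimal state, where it remains forever. Hence the \ac{itel} process converges \ac{as} to an optimal state, for every $\epsilon>0$ and every initial distribution. In terms of \cref{def:stochastically_stable}, absorption into $\cA$ gives $\liminf_{k\to\pinfty}\PP(X_k^\epsilon\in\cA)=1$ for each fixed $\epsilon>0$, so \eqref{eq:def:stochastically_stable} holds with $\cS=\cA$; starting from any $x_0\in\cA$ keeps the process at $x_0$, which forces $\cA\subseteq\cS$ and hence $\cS=\cA$ exactly, although the convergence claim already follows from absorption alone.

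Next I would treat the case $\cA=\emptyset$, covering the last two bullets. Here \cref{prop:itel:recurrence_P_eps} guarantees that $P^\epsilon$ is aperiodic and irreducible for all $\epsilon>0$, so the hypotheses of \cref{thm:sss_min_gamma} are met and the \acp{sss} coincide with the potential-minimizing set, $\cS=\cX^\star$. It then suffices to substitute the explicit description of $\cX^\star$ furnished by \cref{thm:itel:X_star}, whose hypotheses (\cref{assum:interdependence}, \cref{cond:itel}, and $\cA=\emptyset$) are exactly those in force: if $\cE\ne\emptyset$ the \acp{sss} are the equilibrium states maximizing $\tW$, and otherwise they are the aligned states maximizing $\tW-\tS$. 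This directly yields the second and third bullets.

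The bulk of the difficulty lies not in this final assembly but upstream, in \cref{thm:itel:X_star}, whose proof requires computing minimal-resistance trees over the reduced resistance graph of \cref{prop:itel:recurrence_P_0} and relating their resistances to $\tW$ and $\tS$. For the present statement, the only genuine subtlety to keep in mind is that \cref{thm:sss_min_gamma} applies solely in the aperiodic irreducible regime; the absorbing regime $\cA\ne\emptyset$ must therefore be handled by returning to the primitive definition \eqref{eq:def:stochastically_stable}, as above, rather than through the potential $\gamma$.
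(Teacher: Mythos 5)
Your proposal is correct and takes essentially the same route as the paper: the paper obtains \cref{thm:itel:convergence} precisely by combining \cref{prop:itel:recurrence_P_eps} (in the case $\cA\ne\emptyset$, absorption into the recurrence classes $\{x\}\subset\cA$ gives \ac{as} convergence and $\cS=\cA$), \cref{thm:sss_min_gamma} (in the case $\cA=\emptyset$, where aperiodicity and irreducibility give $\cS=\cX^\star$), and the description of $\cX^\star$ from \cref{thm:itel:X_star}. Your additional check that the absorbing case is consistent with the primitive definition in \cref{eq:def:stochastically_stable} is a slightly more explicit rendering of what the paper states without elaboration.
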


In the last case, $\tW-\tS$ represents a trade-off between welfare and stability.
In the case where $F$ and $G$ are chosen of the form $F:u\mapsto \phi_F - \psi_F\cdot u$ and $G:(\lu,u)\mapsto \phi_G - \psi_G\cdot(u-\lu)$ with $\phi_F$, $\psi_F$, $\phi_G$ and $\psi_G$ positive constants, $\tW$ (resp. $\tS$) is an increasing linear function of $W$ (resp. $S$).
In particular, maximizing $\tW$ is equivalent to maximizing $W$ and maximizing $\tW-\tS$ is equivalent to maximizing $\psi_FW-\psi_GS$.
Indeed, for any state $x\in\cC\setminus\cE$, denote $\blu$ the utilities of $x$ and $\lu_j,u_j$ utilities such that $S(x) = u'-\lu'$. Then
\begin{align*}
    \tW(x) - \tS(x)
    = 1 - n\phi_F + \psi_F\sum_i\lu_i - 1 + \phi_G - \psi_G(u'-\lu')
    = \phi_G - n\phi_F + \psi_FW(x) - \psi_GS(x)~.
\end{align*}

\subsection{ODL and IODL}
\label{sec:iodl}

In this section we briefly discuss how \ac{odl} can be improved in the same way as \ac{tel}. \ac{odl} was introduced in \cite{ODL} and follows the same overall logic as \ac{tel} but without intermediate moods. It is shown that the \acp{sss} are the aligned states maximizing welfare without discriminating for Nash equilibria \cite[Theorem 1]{ODL}.
\ac{odl} however presents several behaviors that seem unintuitive, \eg, content agents can become discontent even after having observed an improvement in utility.
Following the \ac{pdl} framework described in \cref{sec:pdl}, we can define a new algorithm \ac{iodl} that is closer to \ac{itel} without intermediate moods, for which the policies are given in \cref{tab:iodl:policies}.

\begin{table}[ht]
\centering
\caption{\ac{iodl} Policies.}
\label{tab:iodl:policies}

\begin{subtable}[b]{\textwidth}
\centering
\caption{Action Policy.}
\label{tab:iodl:action_policy}
\begin{tabular}{||c|c||l||}
    \hhline{|t:===:t|}
    \textbf{Mood} & \textbf{Utility} & \multicolumn{1}{c||}{\textbf{Decision}} \\
    \hhline{||=|=#=||}
    \discontent & / & explore any $a$ \\
    \hhline{||-|-|-||}
    \multirow{2}{*}\content &
    $\lu\notin A$ & explore $a\ne\la$ with probability $\epsilon$, else play $\la$ \\
    \hhline{||~|-|-||}
    & $\lu\in A$ & play $\la$ \\
    \hhline{|b:===:b|}
\end{tabular}
\end{subtable}

\begin{subtable}[b]{\textwidth}
\centering
\caption{Update Policy.}
\label{tab:iodl:update_policy}
\begin{tabular}{||c|c|c||l||}
    \hhline{|t:====:t|}
    \textbf{Mood} & \textbf{Action} & \textbf{Utility} & \multicolumn{1}{c||}{\textbf{Decision}} \\
    \hhline{||=|=|=#=||}
    \multirow{2}{*}\discontent & \multirow{2}{*}/ & $F(u)>0$ & accept with probability $\epsilon^{F(u)}$, else reject \\
    \hhline{||~|~|-|-||}
    &  & $F(u)=0$ & accept with probability $c_F$, else reject \\
    \hhline{||-|-|-|-||}
    \multirow{5}{*}\content &
    \multirow{2}{*}{$a\ne\la$} & $u>\lu$ & accept with probability $\epsilon^{G(\lu,u)}$, else revert \\
    \hhline{||~|~|-|-||}
    & & $u\le\lu$ & revert \\
    \hhline{||~|-|-|-||}
    & \multirow{3}{*}{$a=\la$} & $u>\lu$ & accept \\
    \hhline{||~|~|-|-||}
    & & $u=\lu$ & revert \\
    \hhline{||~|~|-|-||}
    & & $u<\lu$ & reject \\
    \hhline{|b:====:b|}
\end{tabular}
\end{subtable}

\end{table}

The removal of intermediate moods makes \ac{iodl} unable to discriminate equilibria. The sets $\cC$, $\cA$ and the virtual welfare $\tW$ are the same as in \cref{sec:itel:results}. 
\Cref{prop:itel:recurrence_P_0,prop:itel:recurrence_P_eps} hold for \ac{iodl} and a similar result to \cref{thm:itel:convergence} is shown.
\begin{theorem}[\ac{iodl} convergence]
\label{thm:iodl:convergence}
    Assume that \cref{assum:interdependence} holds and that $0 \le F(u) < \frac1n$. Then:
    \begin{itemize}
        \item The \ac{iodl} process converges \ac{as} to an optimal state.
        \item If there is no optimal state, the \acp{sss} are aligned states maximizing $\tW$.
    \end{itemize}
\end{theorem}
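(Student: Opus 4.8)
The plan is to mirror the \ac{itel} analysis of \cref{sec:itel:results,sec:itel:proof}, using that \ac{iodl} is a \ac{pdl} and hence its process is a \ac{rpmp} over the state space $\cX$. Since the text asserts that \cref{prop:itel:recurrence_P_0,prop:itel:recurrence_P_eps} carry over to \ac{iodl}, I would first record them: the recurrence classes of the unperturbed process $P^0$ are the aligned singletons $\{x\}$, $x\in\cC$, plus possibly the class of the all-discontent state $D$; and for $P^\epsilon$ the optimal states $\cA$ are absorbing and form the only recurrence classes when $\cA\ne\emptyset$, while $P^\epsilon$ is aperiodic and irreducible when $\cA=\emptyset$. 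Their proofs transfer essentially verbatim because removing the intermediate moods only shortens the $\content\to\discontent$ and $\content\to\content$ transitions without creating new recurrent behaviour.

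The first bullet is then immediate. When $\cA\ne\emptyset$, the \ac{iodl} form of \cref{prop:itel:recurrence_P_eps} says every recurrence class of $P^\epsilon$ is a singleton in $\cA$ and every other state is transient; a finite Markov chain started from any distribution almost surely leaves the transient states and is absorbed into a recurrence class, so the process reaches an optimal state and stays there. For the second bullet I would assume $\cA=\emptyset$, so that $P^\epsilon$ is aperiodic and irreducible and \cref{thm:sss_min_gamma} applies: the \acp{sss} are exactly the states minimising the potential $\gamma$, and by \cref{prop:itel:recurrence_P_0} the resistance graph reduces to the vertices $\cC\cup\{D\}$.

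Next I would compute the two resistances that drive everything. A discontent agent explores uniformly, so with resistance $0$ all agents can land simultaneously on a benchmark profile $\bla$, after which each accepts its observed utility $\lu_i=U_i(\bla)$ with resistance $F(\lu_i)$; hence $r(D\to x)=\sum_i F(\lu_i)=1-\tW(x)$. Conversely, from a non-optimal aligned state the cheapest deviation is a single content exploration of resistance $1$, and because \ac{iodl} has no intermediate moods this exploration can immediately make another agent discontent (its utility drops below benchmark, triggering a reject), after which free discontent exploration cascades all the way to $D$ by \cref{assum:interdependence} at no extra resistance. Thus $r^\star(x)=r(x\to D)=1$ \emph{uniformly} over all aligned states, equilibria included. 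It is precisely this uniformity that makes the stability quantity $\tS$ drop out and prevents equilibria from being singled out, in contrast with \ac{itel}.

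Finally I would convert these into a potential computation by tree surgery. In the reduced graph the cheapest spanning tree rooted at an aligned $x$ routes every other aligned state to $D$ (cost $1$ each) and routes $D\to x$ (cost $1-\tW(x)$), so $\gamma(x)=(|\cC|-1)+\bigl(1-\tW(x)\bigr)=|\cC|-\tW(x)$, the only root-dependent contribution being $r(D\to x)-r^\star(x)=-\tW(x)$. Hence $\gamma(x)-\gamma(y)=\tW(y)-\tW(x)$, and $\gamma$ is minimised exactly on $\arg\max_{x\in\cC}\tW(x)$, which are the \acp{sss}. The hypothesis $0\le F(u)<\tfrac1n$ enters here to force $\sum_i F(\lu_i)<1$, so that each entry cost $r(D\to x)<1=r^\star(x)$ and routing through $D$ is indeed optimal. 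I expect the main obstacle to be making the uniform-exit-cost claim fully rigorous: one must verify, via \cref{assum:interdependence}, that from every aligned state the single destabilising exploration genuinely cascades to $D$ with total resistance $1$ and that no alternative edge in the reduced graph is cheaper, since this is exactly the step where the cancellation producing $\tW$ must be justified.
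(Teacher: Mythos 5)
Your overall architecture matches the paper's proof (carry over \cref{prop:itel:recurrence_P_0,prop:itel:recurrence_P_eps}, get the first bullet from absorption, then compute $r(D\to x)=1-\tW(x)$, $r^\star(x)=1$, and conclude via potentials and \cref{thm:sss_min_gamma}), and your final formula $\gamma(x)-\gamma(y)=\tW(y)-\tW(x)$ is the right one. But there is a genuine gap in the middle step: the claim that $r^\star(x)=r(x\to D)=1$ \emph{uniformly}, i.e., that every non-optimal aligned state has a resistance-$1$ path all the way to $D$. First, \cref{assum:interdependence} only guarantees that some exploration \emph{changes} another agent's utility, not that it drops it below benchmark; when the change is an increase, the affected agent accepts the higher benchmark, and you need the explorer to \emph{revert} on the next step so that the affected agent then sees its old utility, rejects, and becomes discontent. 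Second, and this is the real obstruction, reverting is not always available: in \cref{tab:iodl:update_policy}, when the explorer observes $u_j>\lu_j$ with $G(\lu_j,u_j)=0$, acceptance happens with probability $\epsilon^0=1$, so the revert branch has probability zero. In that case the cheapest exit from $x$ lands in another aligned state $y\in\cC$ with $W(y)>W(x)$, not in $D$, and $r(x\to D)$ can genuinely exceed $1$. Consequently the spanning tree you build—routing every other aligned state directly to $D$ at cost $1$ each—may use edges that do not exist at that cost, so your potential computation is not justified as written.

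The paper's proof (\cref{lemma:iodl:r_x}) resolves exactly this by a case split: if the explorer can revert, the cascade to $D$ goes through as you describe; if $G(\lu_j,u_j)=0$ forces acceptance, the easy edge instead leads to some $y\in\cC$ with $W(y)>W(x)$. Either way $r^\star(x)=1$, and the key structural fact is that easy edges either hit $D$ or strictly increase welfare, hence cannot close a cycle; the easy-edge subgraph is therefore a $D$-tree of cost $\sum_{y\in\cC}r^\star(y)=\gamma(D)$, and $\gamma(x)=\gamma(D)-r^\star(x)+r(D\to x)=\gamma(D)-\tW(x)$ follows without ever needing $r(x\to D)=1$. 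Note also that you leave the minimality of these trees (the lower bound on $\gamma$) as an acknowledged obstacle; the paper closes it with the argument of \cref{lemma:itel:potential}: along the $D\leadsto x$ path of any $x$-tree, the resistances incurred by content agents exploring/accepting and those incurred by discontent agents accepting are of disjoint origin, so they add up to at least $\sum_{y\ne x}r^\star(y)+\sum_iF(\lu_i)$. Your use of the hypothesis $F<\frac1n$ (to get $\tW>0$, hence $\gamma(x)<\gamma(D)$, so $D$ is not a \ac{sss}) is correct.
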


Notice that $G$ has a superficial role theoretically-wise and no particular bound is needed for it.
In the case where $F$ is chosen of the form $F:u\mapsto \phi_F - \psi_F\cdot u$, maximizing $\tW$ is equivalent to maximizing $W$.
\Cref{thm:iodl:convergence} is proven by identifying $\cX^\star$ in a similar way as in \ac{itel}.
The overall proof the same and the few differences are highlighted in \cref{sec:iodl:proof}.

\section{Random Games -- RITEL}
\label{sec:ritel}

In this section we consider random games. For a given action profile $\ba$, the utility  $U_i(\ba)$ is a random variable that is bounded \ac{as} in $[0,1]$. Apart from this assumption, utilities may follow any probability distribution: continuous, discrete or even deterministic.
The introduction of noisy utilities yields the following issues, both from a practical standpoint as it can confuse agents, and from a theoretical standpoint as it prevents us from using the \ac{rpmp} framework.

\paragraph{Instability}
When no agent explores, different payoffs may still be observed due to the random nature of utilities. Under \ac{itel}, those differences would be wrongly interpreted as changes of behaviors from other agents.

\paragraph{Infinite space state}
In \ac{itel}, the proof of convergence is based on the study of the \ac{rpmp} induced by the algorithm. This process acts over the space of all states $(\blm,\bla,\blu)$ of possible moods, actions, and utilities for each agents.
If the support of the random utilities is infinite, which would be the case for continuous distributions, then the state space becomes infinite and the theory of \acp{rpmp} no longer applies.

\paragraph{Non-Regularity}
The randomness of utilities must be taken into account when computing the resistance of paths. It is not true in general that transition probabilities are regular as in \cref{def:regularity}.

\medskip
We introduce \ac{ritel} in \cref{sec:ritel_def}.
\Cref{sec:ritel:almost_reg,sec:ritel:arpmp} adapts the \ac{rpmp} framework discussed in \cref{sec:rpmp} in order to fit the \ac{ritel} process and overcome the issues mentioned above. The \ac{ritel} process is then analyzed in \cref{sec:ritel:results} following the same reasoning as for \ac{itel} in \cref{sec:itel:results} with the necessary adaptations, eventually leading to our convergence statement in \cref{thm:ritel:convergence}.

\subsection{Definition of RITEL}
\label{sec:ritel_def}

In this section we introduce \ac{ritel}, an adaptation of \ac{itel} in the random setup to tackle the issues mentioned above. As for \ac{itel}, we aim at converging to optimal states, then to equilibria with maximal welfare, then to action profiles with maximal trade-off between welfare and stability.
By welfare, we refer to the sum of the mean utilities of all player.

Let us first describe informally the differences between \ac{itel} and \ac{ritel}.
First, iterations are divided in periods of length $\tau$. Within a period, each agent commits to a fixed action so that the chosen action profile remains constant. This way, the variations in utility an agent may observe along the period are solely due to noise, so that it can estimate more reliably the average utility it should receive in this action profile.
When the period ends, each agent compares the estimate of its average utility to its benchmark utility and update their state with a policy similar to \ac{itel}.

Second, in order to consider only a finite amount of possible states, benchmark utilities are restricted to a finite set of bins by slicing the utility range $[0,1]$ into bins $[0,\delta), [\delta,2\delta), \dots, [1-\delta,1), {1}$ for some $\delta>0$ chosen as the inverse of an integer. The highest bin $\{1\}$ may also be referred to as $[1,1+\delta)$ for consistency.
This implies a loss of accuracy that eventually appears in our main result:
\ac{ritel} is not able to discriminate two action profiles when their welfare are too close. Choosing smaller $\delta$ allows finer results, however this requires longer periods to ensure convergence.
Precisely, we shall see that choosing a period length depending on the perturbation factor $\epsilon$ of the form $\tau=\lceil\tau_0 \log(\frac1\epsilon)\rceil$ for some constant $\tau_0$ ensures that the randomness of utilities is negligible in the overall process. The particular choice of $\tau_0$ is guided by a condition which essentially boils down to the fact that better accuracy, \ie, smaller $\delta$, requires higher reliability on the samples, \ie, higher~$\tau_0$.

Third, when comparing utilities to choose a behavior, an agent actually compares the utility bins. Furthermore, it is important for stability to avoid the situation where an agent whose mean utility is close to the border between two bins constantly switches between both bins. For this reason, agents consider a change of utility significant enough to act on it only when the new utility is neither in the benchmark bin, nor in an adjacent bin, which implies a gap in utilities of at least $\delta$.

\subsubsection{Notations}

In the following, actual utilities are denoted using the letter $u$ whereas utility bins are denoted using the letter $v$. We denote $|\cdot|$ the function mapping a utility $u$ to the bin $v = |u|$ it belongs to.
$v^-$ and $v^+$ denote the two edges of a bin $v$, \ie, $v=[v^-,v^+)$ with $v^+=v^-+\delta$.
Given a bin $v=[v^-,v^+)$, we denote $v+k\delta = [(v+k\delta)^-,(v+k\delta)^+) = [v^-+k\delta,v^++k\delta)$ for any integer $k$ and use comparison operators $=$, $\le$, $\ge$, $<$, and $>$ using the natural order on the bins.
We write $v'=v\pm\delta$ (resp. $v'\ne v\pm\delta$) if $v'\in\{v-\delta,v,v+\delta\}$ (resp. $v'\notin\{v-\delta,v,v+\delta\}$).

Given any action profile $\ba$, we denote $M_i(\ba) = \EE[U_i(\ba)]$ the mean utility of agent $i$ in the action profile, and $N_i(\ba) = |M_i(\ba)|$ the corresponding bin.
During one step of the \ac{ritel} algorithm, an agent in a state $(\lm,\la,\lv)$ plays an action $a$ for $\tau$ steps and observes i.i.d. samples $\tilde u_1, \tilde u_2, \dots, \tilde u_\tau$ of distribution $U(\ba)$. The agent then computes its average utility
\[u \eqdef \frac1\tau \sum_{k=1}^\tau \tilde u_k\]
and the corresponding bin $v=|u|$. We denote $U^{(\tau)}$ the distribution of $u$.
Mean utilities are denoted $\lmu = \EE[U(\bla)]$ and $\mu = \EE[U(\ba)]$. Their respective bins are denoted $\lnu = |\lmu|$ and $\nu = |\mu|$, and referred to as mean bins.
From now on, the term ``offset observation'' always refers to an agent observing an utility bin $v\ne\nu\pm\delta$ where $\nu$ is the mean bin of the agent in the played action profile. In other words, we consider an observed average utility to be offset if it does not belong to its mean bin or a bin next to it, which implies a deviation of the empirical mean of at least $\delta$ from the true mean.

Finally, $F$ and $G$ are introduced with the same purpose as in \ac{itel}, although they are used as functions of bins instead of true utilities. We define them over bins by applying them to the lower bounds of the bins, that is $F(v) = F(v^-)$ and $G(\lv,v) = G(\lv^-,v^-)$.

\subsubsection{Algorithm}

The policies of \ac{ritel} are detailed in \cref{tab:ritel:policies} and the algorithm is summarized in \cref{alg:ritel}.
Notice that the policies are essentially the same as in \ac{itel}, except that agents make decision based on bins instead of utilities. They also have a small tolerance to change: observing bins $\lv-\delta, \lv, \lv+\delta$ is interpreted as if no change had happened, whereas observing bins $\lv+2\delta, \lv+3\delta, \dots$ is interpreted as an improvement and observing bins $\lv-2\delta, \lv-3\delta, \dots$ is interpreted as a deterioration.

\begin{table}[ht]
\centering
\caption{\ac{ritel} Policies.}
\label{tab:ritel:policies}

\begin{subtable}[b]{\textwidth}
\centering
\caption{Action Policy.}
\label{tab:ritel:action_policy}
\begin{tabular}{||c|c||l||}
    \hhline{|t:===:t|}
    \textbf{Mood} & \textbf{Utility} & \multicolumn{1}{c||}{\textbf{Decision}} \\
    \hhline{||=|=#=||}
    \discontent & / & explore any $a$ \\
    \hhline{||-|-|-||}
    \multirow{2}{*}\content &
    $\lv<\{1\}$ & explore $a\ne\la$ with probability $\epsilon$, else play $\la$ \\
    \hhline{||~|-|-||}
    & $\lv=\{1\}$ & play $\la$ \\
    \hhline{||-|-|-||}
    \hopeful & / & play $\la$ \\
    \hhline{||-|-|-||}
    \watchful & / & play $\la$ \\
    \hhline{|b:===:b|}
\end{tabular}
\end{subtable}

\begin{subtable}[b]{\textwidth}
\centering
\caption{Update Policy.}
\label{tab:ritel:update_policy}
\begin{tabular}{||c|c|c||l||}
    \hhline{|t:====:t|}
    \textbf{Mood} & \textbf{Action} & \textbf{Utility} & \multicolumn{1}{c||}{\textbf{Decision}} \\
    \hhline{||=|=|=#=||}
    \discontent & / & / & accept with probability $\epsilon^{F(u)} \wedge c_F$, else reject \\
    \hhline{||-|-|-|-||}
    \multirow{5}{*}\content &
    \multirow{2}{*}{$a\ne\la$} & $v\ge\lv+2\delta$ & accept with probability $\epsilon^{G(\lv,v)}$, else revert \\
    \hhline{||~|~|-|-||}
    & & $v\le\lv+\delta$ & revert \\
    \hhline{||~|-|-|-||}
    & \multirow{3}{*}{$a=\la$} & $v>\lv+\delta$ & become \hopeful \\
    \hhline{||~|~|-|-||}
    & & $v=\lv\pm\delta$ & revert \\
    \hhline{||~|~|-|-||}
    & & $v\le\lv-2\delta$ & become \watchful \\
    \hhline{||-|-|-|-||}
    \multirow{3}{*}{\hopeful} & \multirow{3}{*}{/} & $v\ge\lv+2\delta$ & accept \\
    \hhline{||~|~|-|-||}
    & & $v=\lv\pm\delta$ & revert \\
    \hhline{||~|~|-|-||}
    & & $v\le\lv-2\delta$ & become \watchful \\
    \hhline{||-|-|-|-||}
    \multirow{3}{*}{\watchful} & \multirow{3}{*}{/} & $v\ge\lv+2\delta$ & become \hopeful \\
    \hhline{||~|~|-|-||}
    & & $v=\lv\pm\delta$ & revert \\
    \hhline{||~|~|-|-||}
    & & $v\le\lv-2\delta$ & reject \\
    \hhline{|b:====:b|}
\end{tabular}
\end{subtable}

\end{table}

\begin{algorithm}[ht]
\caption{\ac{ritel}}
\label{alg:ritel}
\begin{algorithmic}
\STATE
\STATE Initialize at any state $(\blm,\bla,\blv)$
\FOR{periods $t=1,2,\dots$}
    \STATE $a_i \gets \text{ACTION}(\lm_i,\lv_i)$ according to \cref{tab:ritel:action_policy} \textbf{for} $i=1,\dots,n$
    \FOR{$k=1,\dots,\tau$}
        \STATE $\tilde u_{i,k} \gets \text{SAMPLE}(\ba,i)$ \textbf{for} $i=1,\dots,n$
    \ENDFOR
    \STATE $u_i \gets \frac{1}{\tau}\sum_{k=1}^\tau \tilde u_{i,k}$ \textbf{for} $i=1,\dots,n$
    \STATE $v_i \gets |u_i|$ \textbf{for} $i=1,\dots,n$
    \STATE $(\lm_i,\la_i,\lv_i) \gets \text{UPDATE}(\lm_i,\la_i,\lv_i,a_i,v_i)$ according to \cref{tab:ritel:update_policy} \textbf{for} $i=1,\dots,n$
\ENDFOR
\end{algorithmic}
\end{algorithm}

\subsubsection{Theoretical Framework}

Let us discuss the conditions needed to use the \ac{rpmp} framework given by \cref{def:pmp,def:regularity}.
Since the states of the algorithm are updated at the end of each period, we define the perturbed process $P^\epsilon$ to model the transitions from one state to another after a full period of sampling, for which we recall the length $\tau=\lceil\tau_0 \log(\frac1\epsilon)\rceil$.
Notice that $P^\epsilon$ also corresponds to the process without periods---or equivalently with a period length of $1$---but where each utility distribution $U$ is replaced with the distribution $U^{(\tau)}$ of its empirical average.
Indeed, the only role of the periods is to reduce the variance of observations.
When $\epsilon\to0$, $\tau\to\pinfty$ and $U^{(\tau)}$ converges as a distribution towards $\mu$.

The unperturbed process $P^0$ can therefore be defined by taking the limit as $\epsilon\to0$ in \cref{tab:ritel:policies} and replacing each observed utility $U^{(\tau)}$ with $\mu$.
Formally, let us check that the transition probabilities indeed converge to such process.
Consider an agent in a state $(\lm,\la,\lv)$ who observes $u \sim U^{(\tau)}$, the probability that the agent chooses a given behavior is a combination of probabilities of $u$ belonging to specific bins and of probabilities of choosing a behavior given the observed bin.
For example, the probability that a content agent who explored accepts its exploration with new benchmark bin $v$ is $\PP\bLp U^{(\tau)}\in v \bRp \epsilon^{G(\lv,v)}$.
The probability that a hopeful agent becomes watchful is $\sum_{v\le\lv-2\delta} \PP\bLp U^{(\tau)}\in v \bRp$.

Since $\lim_{\epsilon\to0}\tau(\epsilon) = \pinfty$, the law of large numbers implies that as $\epsilon\to0$, the quantities $\PP(U^{(\tau(\epsilon))}\in v)$ converge to $1$ when $\mu=\EE[U]\in v$, else converge to $0$, except for one case. Indeed, if $U$ is not deterministic and its mean $\mu=v^-$ is exactly on the edge between two bins $v$ and $v-\delta$, then both bins are observed with probability $\frac12$ when $\tau\to\pinfty$ as the distribution of the empirical mean becomes symmetric due to the central limit theorem.
Regardless of the case, it always holds that $\lim_{\epsilon\to0} \PP\bLp U^{(\tau(\epsilon))}\in v\bRp$ exists, which in turn implies that all transition probabilities $P^\epsilon_{x,y}$ converge to a limit $P^0_{x,y}$ as $\epsilon\to0$.
Moreover, in $P^0$ the observed utilities are deterministic or equally distributed between two adjacent bins in the special case discussed above.

Regarding the regularity condition given in \cref{def:regularity}, we can see from the above discussion that this condition would be satisfied if and only if $\PP\bLp U^{(\tau(\epsilon)}\in v \bRp$ is regular for all random utilities $U$ and all bins $v$.
This is unfortunately not true in general. However, a slightly weaker condition is satisfied by any distribution.
The following two sections are devoted to this discussion. \cref{sec:ritel:almost_reg} introduces a weaker notion of regularity that is satisfied by $\PP\bLp U^{(\tau(\epsilon)}\in v \bRp$ using large deviation results. \cref{sec:ritel:arpmp} adapts the theory of \acp{rpmp} to fit this new regularity condition and yields a similar result to \cref{thm:sss_min_gamma}, see \cref{thm:sss_min_gamma_AR}.
From there we are able to analyze the convergence of \ac{ritel} in \cref{sec:ritel:results}.

\subsection{Almost Regularity and Large Deviations}
\label{sec:ritel:almost_reg}

In this section we explain why for any utility distribution $U$ and bin $v$, the probability $\PP(U^{(\tau(\epsilon)}\in v)$ satisfies the following definition, which follows from Cramér's theorem.

\begin{definition}[Almost Regularity]
    \label{def:almost_regularity}
    A family $(X^\epsilon)_{0\le\epsilon<\epsilon_0}$ of non-negative real numbers is \emph{almost regular} if there exists $r\ge0$ such that
    \begin{align}
    \label{eq:def:almost_regularity}
    \begin{cases}
        \lim_{\epsilon\to0}\epsilon^{-r'}X^\epsilon = 0 & \text{for all $r'<r$,}\\
        \lim_{\epsilon\to0}\epsilon^{-r'}X^\epsilon = \pinfty & \text{for all $r'>r$.}
    \end{cases}
    \end{align}
    $r$ is unique and called the \emph{resistance} of $X^\epsilon$. $r$ can be equal to $\pinfty$, which is the case in particular when $X^\epsilon=0$.
\end{definition}

It is immediate that regularity as defined in \cref{def:regularity} implies almost regularity, and that both notions of resistance coincide in this case.
Almost regularity suggests that $X^\epsilon$ behaves somewhat like $\epsilon^r$ but in a broader sense.
For instance, the sequence $\bLp\log(\frac{1}{\epsilon}\bRp^\alpha \epsilon^r)_{\epsilon>0}$ is almost regular with resistance $r$ regardless of $\alpha\in\RR$, although $\log(\frac{1}{\epsilon})^\alpha$ may converge to $0$ or to $\pinfty$.
In particular, if the resistance of $X^\epsilon$ is positive then $X^\epsilon \to 0$ as $\epsilon\to0$ by applying the definition with $r'=0$.
Let us now recall Cramér's theorem.
For a real-valued random variable $U$, define its \ac{lmgf}
\begin{align*}
\Lambda_U:t \mapsto \log(\EE[\exp(tU)])
\end{align*}
along with its Legendre transform
\begin{align*}
    \Lambda^*_U:x \mapsto \sup_{t\in\RR}(tx-\Lambda_U(t))~.
\end{align*}
Recall a few elementary properties of the function $\Lambda^*_U$ along with Cramér's theorem.
\begin{lemma}
\label{lemma:Lambda_props}
The function $\Lambda^*_U$ takes the value $0$ at the mean $\mu$ of $U$. It is also decreasing over $(\minfty,\mu]$ and increasing over $[\mu,\pinfty)$.
Moreover, if $x>\mu$ then $\PP(U\ge x)=0$ if and only if $\Lambda^*_U(x)=\pinfty$, and if $x<\mu$ then $\PP(U\le x)=0$ if and only if $\Lambda^*_U(x)=\pinfty$.
\end{lemma}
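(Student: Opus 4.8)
The plan is to exploit the convexity of the log-moment-generating function together with standard Chernoff-type estimates. Everything simplifies considerably here because $U$ is bounded in $[0,1]$, so that $\Lambda_U(t)=\log\EE[\exp(tU)]$ is finite for every $t\in\RR$, convex, and smooth, with $\Lambda_U(0)=0$ and $\Lambda_U'(0)=\mu$. These basic facts are what make all subsequent manipulations valid without integrability caveats, and I would record them first.

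For the value at the mean, I would evaluate the supremum defining $\Lambda^*_U(\mu)$ at $t=0$, giving $\Lambda^*_U(\mu)\ge 0\cdot\mu-\Lambda_U(0)=0$; the same evaluation shows $\Lambda^*_U(x)\ge 0$ for every $x$. Conversely, Jensen's inequality applied to the convex exponential yields $\Lambda_U(t)\ge t\mu$ for all $t$, hence $t\mu-\Lambda_U(t)\le 0$ and $\Lambda^*_U(\mu)\le 0$. Thus $\Lambda^*_U(\mu)=0$ and $\mu$ is a global minimizer. Monotonicity then follows from convexity: $\Lambda^*_U$ is a supremum of the affine functions $x\mapsto tx-\Lambda_U(t)$, hence convex, and a convex function attaining its global minimum at $\mu$ is non-increasing on $(\minfty,\mu]$ and non-decreasing on $[\mu,\pinfty)$, exactly as claimed.

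The third part is where I expect the main work. Fix $x>\mu$ and characterize finiteness of $\Lambda^*_U(x)$ through the large-$t$ behaviour of $tx-\Lambda_U(t)$. If $\PP(U\ge x)>0$, then for $t>0$ restricting the expectation to $\{U\ge x\}$ gives $\EE[\exp(tU)]\ge \exp(tx)\PP(U\ge x)$, hence $tx-\Lambda_U(t)\le -\log\PP(U\ge x)<\pinfty$; for $t\le 0$, Jensen gives $tx-\Lambda_U(t)\le t(x-\mu)\le 0$. Taking the supremum over both ranges shows $\Lambda^*_U(x)<\pinfty$. Conversely, if $\PP(U\ge x)=0$, writing $b$ for the essential supremum of $U$ we have $b<x$ and $\EE[\exp(tU)]\le \exp(tb)$ for $t>0$, so $tx-\Lambda_U(t)\ge t(x-b)\to\pinfty$, whence $\Lambda^*_U(x)=\pinfty$. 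This establishes the equivalence $\PP(U\ge x)=0 \iff \Lambda^*_U(x)=\pinfty$ for $x>\mu$; the case $x<\mu$ follows by the symmetric argument, using $t\to\minfty$ and the event $\{U\le x\}$.

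The main obstacle is organizing this third part cleanly: one must split the supremum into the ranges $t>0$ and $t\le 0$ to bound it from above, while separately exploiting the $t\to\pinfty$ asymptotics to force the value $\pinfty$. The boundedness of $U$ is precisely what keeps $\Lambda_U$ finite everywhere and legitimizes the essential-supremum comparison, so I would emphasize where that hypothesis enters rather than invoking the general form of Cramér's theorem.
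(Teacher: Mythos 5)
The paper never proves this lemma---it is stated as a recollection of standard facts just before Cramér's theorem---so your argument can only be judged on its own merits. The first two parts and the forward implication of the third are fine, but the converse implication of the third part has a genuine gap. From $\PP(U\ge x)=0$ you deduce that the essential supremum $b$ of $U$ satisfies $b<x$; this is false in general, since $\PP(U\ge x)=0$ only yields $b\le x$. Concretely, take $U$ uniform on $[0,1]$ and $x=1$: then $\PP(U\ge 1)=0$ but $b=1=x$, and your lower bound $tx-\Lambda_U(t)\ge t(x-b)=0$ gives nothing. This boundary case (a bin edge equal to the essential supremum of a continuous utility) is entirely possible in the paper's setting, so it cannot be waved away. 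The repair is short and subsumes your case $b<x$: for $t\ge0$ write $tx-\Lambda_U(t)=-\log\EE[\exp(t(U-x))]$; since $U\le x$ almost surely, $\exp(t(U-x))\le1$ and converges pointwise to the indicator of $\{U=x\}$ as $t\to\pinfty$, so dominated convergence gives $\EE[\exp(t(U-x))]\to\PP(U=x)=0$, hence $tx-\Lambda_U(t)\to\pinfty$ and $\Lambda^*_U(x)=\pinfty$. The symmetric argument with $t\to\minfty$ handles $x<\mu$.

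A secondary issue: you establish only that $\Lambda^*_U$ is non-increasing on $(\minfty,\mu]$ and non-decreasing on $[\mu,\pinfty)$ and call this ``exactly as claimed,'' but the lemma asserts strict monotonicity, and strictness is genuinely used downstream: in the proof of \cref{corol:almost_regular_deviation} the paper needs $\Lambda^*_U(v^+)>\Lambda^*_U(v^-)$ for $\mu<v^-<v^+$, so that $\PP(U^{(\tau)}\ge v^+)$ has strictly larger resistance than $\PP(U^{(\tau)}\ge v^-)$ and their difference is equivalent to the latter. Convexity plus a global minimum at $\mu$ cannot rule out a flat piece, so you need in addition that $\mu$ is the unique zero of $\Lambda^*_U$. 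This follows from facts you already recorded: since $\Lambda_U(t)=t\mu+O(t^2)$ near $t=0$ (boundedness of $U$), for $x>\mu$ one has $tx-\Lambda_U(t)=t(x-\mu)+O(t^2)>0$ for small $t>0$, hence $\Lambda^*_U(x)>0$; then for $\mu<x<y$ with $\Lambda^*_U(y)<\pinfty$, writing $x=\lambda\mu+(1-\lambda)y$ and using convexity gives $\Lambda^*_U(x)\le(1-\lambda)\Lambda^*_U(y)<\Lambda^*_U(y)$, with the jump to $\pinfty$ covering the remaining cases. With these two repairs your proof is correct, and it is the natural elementary route given that the utilities are bounded.
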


\begin{lemma}[Cramér {\cite[Theorem 23.3]{KlenkeProbability2020}}]
    \label{lemma:cramer}
    Let $U$ be any non-degenerate random variable with expectation $\mu$ and finite \ac{lmgf}.
    Then, the empirical mean $U^{(\tau)}$ satisfies a large deviation principle with rate function $\Lambda^*_U$, that is:
    \begin{itemize}
        \item For all $x>\mu$,
        \begin{align*}
            \lim_{\tau\to\pinfty} \frac{1}{\tau}\log\big(\PP(U^{(\tau)} \ge x)\big) = -\Lambda^*_U(x)~.
        \end{align*}
        \item For all $x<\mu$,
        \begin{align*}
            \lim_{\tau\to\pinfty} \frac{1}{\tau}\log\big(\PP(U^{(\tau)} < x)\big) = -\Lambda^*_U(x)~.
        \end{align*}
    \end{itemize}
\end{lemma}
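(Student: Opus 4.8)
The statement is Cramér's theorem, so the plan is to establish matching exponential upper and lower bounds (a large deviation principle) and read off the limit. I treat the regime $x>\mu$; the regime $x<\mu$ reduces to it by applying the result to $-U$ at the point $-x$, since $\Lambda_{-U}(t)=\Lambda_U(-t)$ gives $\Lambda^*_{-U}(-x)=\Lambda^*_U(x)$ and the mean of $-U$ is $-\mu>-x$. The replacement of the strict inequality $<$ by $\ge$ across this reflection is harmless, as a single boundary point contributes negligibly on the exponential scale. Throughout I use the convexity and smoothness of $\Lambda_U$ and the properties of $\Lambda^*_U$ recorded in \cref{lemma:Lambda_props}.

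For the upper bound I would use the exponential Markov (Chernoff) inequality. Writing $U^{(\tau)}=\frac1\tau\sum_{k=1}^\tau U_k$ with $U_k$ i.i.d.\ copies of $U$, for every $t\ge0$,
\[
\PP(U^{(\tau)}\ge x)=\PP\left(e^{t\sum_k U_k}\ge e^{t\tau x}\right)\le e^{-t\tau x}\,\EE\left[e^{tU}\right]^{\tau}=e^{-\tau(tx-\Lambda_U(t))}.
\]
Taking logarithms, dividing by $\tau$, and optimising over $t\ge0$ yields $\frac1\tau\log\PP(U^{(\tau)}\ge x)\le-\sup_{t\ge0}(tx-\Lambda_U(t))$ for every $\tau$. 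By Jensen's inequality $\Lambda_U(t)\ge\mu t$, so for $t<0$ one has $tx-\Lambda_U(t)\le t(x-\mu)<0$, which is dominated by the value $0$ attained at $t=0$; hence the constrained supremum equals the unconstrained one, and the bound reads exactly $-\Lambda^*_U(x)$.

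The lower bound is the delicate half and I would obtain it by an exponential change of measure. Assume first that the supremum defining $\Lambda^*_U(x)$ is attained at an interior point $t^\star$, so that $\Lambda_U'(t^\star)=x$, and introduce the tilted law $\widetilde\PP$ with coordinate-wise density $d\widetilde\PP/d\PP=e^{t^\star U-\Lambda_U(t^\star)}$; under $\widetilde\PP$ the samples stay i.i.d.\ with mean exactly $x$. Reversing the tilt on the event $\{U^{(\tau)}\in[x,x+\eta)\}$ gives, for any $\eta>0$,
\[
\PP(U^{(\tau)}\ge x)\ge e^{-\tau\left(t^\star(x+\eta)-\Lambda_U(t^\star)\right)}\,\widetilde\PP\left(U^{(\tau)}\in[x,x+\eta)\right),
\]
and since $t^\star x-\Lambda_U(t^\star)=\Lambda^*_U(x)$ the exponent is $-\tau(\Lambda^*_U(x)+t^\star\eta)$. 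The weak law of large numbers under $\widetilde\PP$ keeps $\widetilde\PP(U^{(\tau)}\in[x,x+\eta))$ bounded away from $0$, so $\liminf_\tau\frac1\tau\log\PP(U^{(\tau)}\ge x)\ge-\Lambda^*_U(x)-t^\star\eta$; letting $\eta\to0$ matches the upper bound.

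The main obstacle is exactly the degenerate configurations that the interior-maximiser argument above excludes: when $t^\star$ sits on the boundary of the effective domain of $\Lambda_U$, or when $x$ lies at the upper edge of the support of $U$ so that $\Lambda^*_U(x)$ is finite but unattained (or infinite, handled by \cref{lemma:Lambda_props}), the tilting must be replaced by a truncation/approximation of $U$ or by a direct lower bound on the probability that all $U_k$ fall near the essential supremum. In our setting, however, utilities are bounded in $[0,1]$, so $\Lambda_U$ is finite and smooth on all of $\RR$, which eliminates these pathologies and makes the interior case essentially exhaustive. For this reason I would ultimately invoke the general statement of \cite[Theorem 23.3]{KlenkeProbability2020}, whose hypotheses are guaranteed here by the boundedness of the utilities.
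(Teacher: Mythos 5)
Your proposal and the paper diverge in an unusual way: the paper does not prove this lemma at all --- it is imported verbatim as Cramér's theorem, with the citation to \cite[Theorem 23.3]{KlenkeProbability2020} doing all the work (the paper's only addition is the remark, just after the statement, that the strict and non-strict inequalities are interchangeable). You instead sketch the classical proof: the Chernoff bound for the upper estimate, an exponential change of measure for the lower one, and a reduction of the case $x<\mu$ to $x>\mu$ by reflecting $U$ to $-U$. The core of the sketch is sound; in particular, your observation that restricting the supremum to $t\ge0$ is lossless for $x>\mu$ (via Jensen's bound $\Lambda_U(t)\ge t\mu$) is exactly the right justification that the constrained Chernoff exponent equals $\Lambda^*_U(x)$. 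What the citation buys the paper is precisely what you defer at the end: the degenerate configurations where the tilting argument breaks down.

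Two caveats in your sketch deserve flagging. First, after tilting to mean exactly $x$, the weak law of large numbers controls the two-sided event $\{|U^{(\tau)}-x|<\eta\}$ but not the one-sided event $\{U^{(\tau)}\in[x,x+\eta)\}$ that your bound requires; the standard repairs are either to invoke the central limit theorem under $\widetilde\PP$ (the probability tends to $\tfrac12$, since the tilted law inherits non-degeneracy) or to tilt so that the tilted mean lies strictly inside $(x,x+\eta)$ and then apply the law of large numbers. Second, boundedness of $U$ in $[0,1]$ does not make the interior-maximizer case exhaustive: although $\Lambda_U$ is then finite and smooth on all of $\RR$, for $x$ at or above the essential supremum of $U$ there is no finite $t^\star$ with $\Lambda_U'(t^\star)=x$ (for instance, if $x$ equals the essential supremum and carries an atom of mass $p$, then $\Lambda^*_U(x)=-\log p$ is approached only as $t\to\pinfty$, and one concludes directly from $\PP(U^{(\tau)}\ge x)=p^\tau$). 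For the same reason, your claim that swapping $<$ for $\ge$ across the reflection is harmless fails exactly at that boundary point. These edge cases are not vacuous for this paper: \cref{lemma:Lambda_props} and \cref{corol:almost_regular_deviation} explicitly use the lemma in regimes where $r_U(v)=\pinfty$, i.e., where a bin edge sits at or outside the support. Since you close by invoking the cited theorem anyway, your argument is not wrong --- it simply ends where the paper begins.
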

In \cref{lemma:cramer}, writing $\PP(U^{(\tau)} \ge x)$ or $\PP(U^{(\tau)} > x)$ does not influence the result, and similarly for the lower tail. The inequalities are stated in a way that is compatible with the form of the bins as $[v^-,v^+)$.
Using \cref{lemma:cramer}, the probability of observing a given bin is made almost regular by having $\tau$ diverge to $\pinfty$ when $\epsilon\to0$ at an appropriate rate.
\begin{proposition}
    \label{prop:almost_regular_deviation}
    Let $\tau(\epsilon) = \lceil\tau_0 \log(\frac1\epsilon)\rceil$ for some constant $\tau_0>0$. Under the same conditions as in \cref{lemma:cramer}, the following holds:
    \begin{itemize}
        \item For all $x>\mu$, $\bLp\PP(U^{(\tau(\epsilon))} \ge x)\bRp_{\epsilon>0}$ is almost regular with resistance $\tau_0\Lambda^*_U(x)$.
        \item For all $x<\mu$, $\bLp\PP(U^{(\tau(\epsilon))} < x)\bRp_{\epsilon>0}$ is almost regular with resistance $\tau_0\Lambda^*_U(x)$.
    \end{itemize}
\end{proposition}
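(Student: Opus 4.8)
The plan is to reduce everything to the logarithmic asymptotics supplied by Cramér's theorem. Fix $x>\mu$, write $X^\epsilon = \PP(U^{(\tau(\epsilon))}\ge x)$ and abbreviate $L = \Lambda^*_U(x)$; the goal is to show that $X^\epsilon$ is almost regular with resistance $\tau_0 L$. By \cref{def:almost_regularity} it suffices to study the limit of
\begin{align*}
    \log\bLp\epsilon^{-r'}X^\epsilon\bRp = r'\log\tfrac1\epsilon + \log X^\epsilon
\end{align*}
as $\epsilon\to0$, proving that it tends to $\minfty$ when $r'<\tau_0 L$ and to $\pinfty$ when $r'>\tau_0 L$.

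Before that, I would dispose of the degenerate case $L=\pinfty$. By \cref{lemma:Lambda_props}, for $x>\mu$ this is equivalent to $\PP(U\ge x)=0$, i.e. $U<x$ almost surely; then the empirical mean $U^{(\tau)}$ is itself $<x$ almost surely, so $X^\epsilon=0$ for every $\epsilon$ and the resistance is $\pinfty = \tau_0 L$ by the convention in \cref{def:almost_regularity}. From now on $L<\pinfty$.

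The core step is the application of \cref{lemma:cramer}. Since $\tau(\epsilon)=\lceil\tau_0\log(\frac1\epsilon)\rceil$ is integer-valued and diverges to $\pinfty$ as $\epsilon\to0$, the large deviation limit applies along this sequence and gives $\frac{1}{\tau(\epsilon)}\log X^\epsilon \to -L$, which I rewrite as $\log X^\epsilon = -\tau(\epsilon)L + \tau(\epsilon)\eta(\epsilon)$ with $\eta(\epsilon)\to0$. Writing $\tau(\epsilon)=\tau_0\log(\frac1\epsilon)+\theta(\epsilon)$ with $\theta(\epsilon)\in[0,1)$ and substituting yields
\begin{align*}
    \log\bLp\epsilon^{-r'}X^\epsilon\bRp
    = \bLp r' - \tau_0 L + \tau_0\eta(\epsilon)\bRp\log\tfrac1\epsilon + O(1),
\end{align*}
where the $O(1)$ collects the bounded contribution $-\theta(\epsilon)(L-\eta(\epsilon))$. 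Because $\eta(\epsilon)\to0$, the bracketed coefficient converges to $r'-\tau_0 L$; when $r'<\tau_0 L$ this limit is a fixed negative number, so the coefficient is eventually bounded away from $0$ from below and the product with $\log(\frac1\epsilon)\to\pinfty$ forces $\log(\epsilon^{-r'}X^\epsilon)\to\minfty$, whereas when $r'>\tau_0 L$ the same argument gives $\pinfty$. This establishes the claimed resistance $\tau_0 L = \tau_0\Lambda^*_U(x)$ for the upper tail, and the lower-tail statement ($x<\mu$, with $\PP(U^{(\tau(\epsilon))}<x)$) follows verbatim from the second bullet of \cref{lemma:cramer}.

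The main obstacle is not any single calculation but making sure the sub-leading term $\tau_0\eta(\epsilon)\log(\frac1\epsilon)$ genuinely stays $o(\log(\frac1\epsilon))$ and therefore cannot shift the leading coefficient across the threshold $\tau_0 L$. This is exactly the reason almost regularity, and not the stronger regularity of \cref{def:regularity}, is the correct notion here: Cramér's theorem pins down only the exponential decay rate of the tail and leaves the polynomial (or otherwise subexponential) prefactor uncontrolled, so one cannot expect $\epsilon^{-\tau_0 L}X^\epsilon$ to converge to a finite positive limit, only that any strictly smaller exponent kills it and any strictly larger exponent blows it up.
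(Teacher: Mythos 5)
Your proof is correct and takes essentially the same route as the paper: both arguments feed Cramér's logarithmic limit into the definition of almost regularity and absorb the ceiling in $\tau(\epsilon)$ as a bounded correction, differing only in bookkeeping (the paper sandwiches $\epsilon^{-\tau_0 r'}$ between $e^{-r'}e^{\tau(\epsilon)r'}$ and $e^{\tau(\epsilon)r'}$, whereas you expand $\tau(\epsilon)=\tau_0\log(\tfrac1\epsilon)+\theta(\epsilon)$ with $\theta(\epsilon)\in[0,1)$ and track the error terms explicitly). Your separate treatment of the degenerate case $\Lambda^*_U(x)=\pinfty$ via \cref{lemma:Lambda_props} is a small additional care that the paper leaves implicit.
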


\begin{proof}
    Let us show that if $(X^\tau)_\tau$ satisfies
    \begin{align}
        \label{eq:proof:prop:almost_regular_deviation}
        \lim_{\tau\to\pinfty}\frac{1}{\tau}\log\big(X^\tau\big) = -r
    \end{align}
    for some $r\ge0$, then $(Y^\epsilon)_{\epsilon>0} \eqdef \bLp X^{\tau(\epsilon)} \bRp_{\epsilon>0}$ is almost regular with resistance $\tau_0r$ as defined in \cref{def:almost_regularity}.
    Indeed, if \cref{eq:proof:prop:almost_regular_deviation} holds then for all $r'\ge0$
    \begin{align*}
        \frac{1}{\tau(\epsilon)}\log\left(e^{\tau(\epsilon)r'}X^{\tau(\epsilon)}\right)
        = r' + \frac{1}{\tau(\epsilon)}\log\left(X^{\tau(\epsilon)}\right)
        \underset{\epsilon\to0}{\to} r'-r
    \end{align*}
    as $\lim_{\epsilon\to0}\tau(\epsilon)\to\pinfty$.
    From there,
    \begin{itemize}
        \item If $r'<r$, $\lim_{\epsilon\to0}\log\big(e^{\tau(\epsilon)r'}X^{\tau(\epsilon)}\big) = \minfty$, hence $\lim_{\epsilon\to0}e^{\tau(\epsilon)r'} Y^\epsilon = 0$.
        \item If $r'>r$, $\lim_{\epsilon\to0}\log\big(e^{\tau(\epsilon)r'}X^{\tau(\epsilon)}\big) = \pinfty$, hence $\lim_{\epsilon\to0}e^{\tau(\epsilon)r'} Y^\epsilon = \pinfty$.
    \end{itemize}
    Now, since $\tau(\epsilon) = \lceil\tau_0 \log(\frac1\epsilon)\rceil$ is bounded between $\tau_0 \log(\frac1\epsilon)$ and $\tau_0 \log(\frac1\epsilon) + 1$, we have
    \[e^{-r'} e^{\tau(\epsilon) r'} \le \epsilon^{-\tau_0r'} \le e^{\tau(\epsilon) r'}\]
    and replacing $e^{\tau(\epsilon) r'}$ with $\epsilon^{-\tau_0r'}$ in the above statements yields the same limits.
    We conclude that $(Y^\epsilon)_{\epsilon>0}$ is almost regular with resistance~$\tau_0r$.
    Replacing $X^\tau$ with $\PP(U^{(\tau)} \ge x)$ or $\PP(U^{(\tau)} < x)$ and using \cref{lemma:cramer} concludes the proof. 
\end{proof}

Recall that we are ultimately interested in $\PP(U^{(\tau(\epsilon))} \in v)$ for any bin $v$. This is deduced directly from the above using \cref{lemma:Lambda_props}.
\begin{corollary}
    \label{corol:almost_regular_deviation}
    Let $\tau(\epsilon) = \lceil\tau_0 \log(\frac1\epsilon)\rceil$ for some constant $\tau_0>0$. Then for all random distribution $U$ supported within $[0,1]$ and bin $v$, $\PP(U^{(\tau(\epsilon))} \in v)$ is almost regular.
    Moreover, denoting $r_U(v)$ the corresponding resistance, we have
    \begin{align*}
        r_U(v) =
        \begin{cases}
            \tau_0\Lambda^*_U(v^-) & \text{if $\mu < v^-$,}\\
            0 & \text{if $\mu \in v$,}\\
            \tau_0\Lambda^*_U(v^+) & \text{if $\mu \ge v^+$.}
        \end{cases}
    \end{align*}
    Moreover, $\PP(U^{(\tau(\epsilon))} \in v)$ converges to a positive limit as $\epsilon\to0$ if $r_U(v) = 0$ and is equal to $0$ if $r_U(v) = \pinfty$.
    If $U$ is deterministic, the result remains true except that when $\mu=v^+$, $r_U(v)$ is equal to $\pinfty$ instead of $\tau_0\Lambda^*_U(\mu)=0$.
\end{corollary}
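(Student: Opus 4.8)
The plan is to split on the position of the mean $\mu$ relative to the bin $v=[v^-,v^+)$, matching the three branches of the formula, after first disposing of the deterministic case (which \cref{lemma:cramer} excludes). Since $U\in[0,1]$ almost surely, $\Lambda_U$ is finite on all of $\RR$, so \cref{lemma:cramer}, and hence \cref{prop:almost_regular_deviation}, apply whenever $U$ is non-degenerate. The only genuinely new work is to pass from the tail resistances supplied by \cref{prop:almost_regular_deviation} to the bin probability through $\PP(U^{(\tau)}\in v)=\PP(U^{(\tau)}\ge v^-)-\PP(U^{(\tau)}\ge v^+)$. For deterministic $U\equiv\mu$ we have $U^{(\tau)}=\mu$, so $\PP(U^{(\tau(\epsilon))}\in v)$ is the constant $1$ if $\mu\in[v^-,v^+)$ and $0$ otherwise, hence regular with resistance $0$ or $\pinfty$. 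Comparing with $\Lambda_U^*$ (equal to $0$ at $\mu$ and $\pinfty$ elsewhere for a Dirac mass), this matches the formula in every branch except at $\mu=v^+$, where the half-open bin excludes $\mu$, forcing probability $0$ and resistance $\pinfty$ against the formula's $\tau_0\Lambda_U^*(v^+)=\tau_0\Lambda_U^*(\mu)=0$; this is exactly the advertised exception.

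Now assume $U$ non-degenerate. If $v^-\le\mu<v^+$, then $U^{(\tau)}\to\mu$ in probability, so $\PP(U^{(\tau)}\in v)$ is bounded below by a positive constant for large $\tau$: it tends to $1$ when $\mu$ is interior to $v$, and to $\frac12$ when $\mu=v^-$ by the central limit theorem. As $\tau(\epsilon)\to\pinfty$ the sequence converges to a positive limit, which forces the resistance to be exactly $0$ (for any $r'>0$ one has $\epsilon^{-r'}\PP(U^{(\tau(\epsilon))}\in v)\to\pinfty$), in agreement with the formula.

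The cases $\mu<v^-$ and $\mu\ge v^+$ are symmetric, so take $\mu<v^-$. Both edges exceed the mean, so \cref{lemma:Lambda_props} gives $\Lambda_U^*(v^-)\le\Lambda_U^*(v^+)$, and \cref{prop:almost_regular_deviation} assigns the two tails resistances $\tau_0\Lambda_U^*(v^-)\le\tau_0\Lambda_U^*(v^+)$. Rather than subtract two almost-regular sequences, I would establish the exponential rate of the bin probability directly, namely $\lim_{\tau\to\pinfty}\frac1\tau\log\PP(U^{(\tau)}\in v)=-\Lambda_U^*(v^-)$, and then reuse verbatim the rate-to-resistance conversion from the proof of \cref{prop:almost_regular_deviation} to conclude almost regularity with resistance $\tau_0\Lambda_U^*(v^-)$. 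The upper bound is immediate from $\PP(U^{(\tau)}\in v)\le\PP(U^{(\tau)}\ge v^-)$ together with \cref{lemma:cramer}. For the lower bound I shrink the bin: for small $\eta>0$, $\PP(U^{(\tau)}\in v)\ge\PP(U^{(\tau)}\ge v^-)-\PP(U^{(\tau)}\ge v^-+\eta)$, and since $\Lambda_U^*$ is \emph{strictly} increasing just above $\mu$ (strict convexity of $\Lambda_U$ for non-degenerate $U$), the subtracted tail is exponentially negligible relative to the first, so the rate is exactly $-\Lambda_U^*(v^-)$.

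The main obstacle is precisely this lower bound and the strict monotonicity of $\Lambda_U^*$ it requires: for exponents $r'>\tau_0\Lambda_U^*(v^+)$ both tails are superpolynomially small and their difference is a priori indeterminate, so no purely resistance-based cancellation argument suffices. Two degenerate configurations must be checked by hand: if $v^-$ equals the essential supremum of $U$ there is no room to shrink, but then $\PP(U^{(\tau)}\in v)=\PP(U^{(\tau)}\ge v^-)$ exactly and the single tail already gives the answer; if $v^+$ exceeds the support the upper tail vanishes identically and the bin probability is again a single tail. The symmetric case $\mu\ge v^+$ uses the lower-tail halves of \cref{lemma:cramer,prop:almost_regular_deviation}, and at the boundary $\mu=v^+$ one gets $\Lambda_U^*(v^+)=\Lambda_U^*(\mu)=0$, hence resistance $0$, consistent with $\PP(U^{(\tau)}\in v)\to\frac12$ there. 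Finally, the two closing assertions follow by inspection: $r_U(v)=0$ holds exactly in the branches where the bin probability has a positive limit, and $r_U(v)=\pinfty$ exactly when the governing tail is identically zero, i.e. $\PP(U^{(\tau(\epsilon))}\in v)\equiv 0$.
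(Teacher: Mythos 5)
Your proposal is correct and follows the paper's overall skeleton (deterministic case first, then the three-way split on the position of $\mu$ relative to $v$, with the middle branch handled identically), but the core step for the off-mean branches takes a genuinely different route. The paper writes $\PP(U^{(\tau)}\in v)=\PP(U^{(\tau)}\ge v^-)-\PP(U^{(\tau)}\ge v^+)$, applies \cref{prop:almost_regular_deviation} to each tail separately, and concludes by domination: the far tail has strictly larger resistance, hence (choosing an exponent $r'$ strictly between the two resistances) it is $o$ of the near tail, so the difference is equivalent to the near tail and inherits its resistance. You instead compute the large-deviation rate of the bin probability itself, $\lim_{\tau\to\infty}\frac1\tau\log\PP(U^{(\tau)}\in v)=-\Lambda^*_U(v^-)$, using the one-sided bound $\PP(U^{(\tau)}\in v)\le\PP(U^{(\tau)}\ge v^-)$ and the shrunk-bin lower bound $\PP(U^{(\tau)}\ge v^-)-\PP(U^{(\tau)}\ge v^-+\eta)$, and then feed this into the rate-to-resistance conversion from the proof of \cref{prop:almost_regular_deviation}. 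Both routes hinge on the very same ingredient: a strict gap $\Lambda^*_U(v^-)<\Lambda^*_U(v^+)$ (respectively $\Lambda^*_U(v^-)<\Lambda^*_U(v^-+\eta)$), unless the far tail vanishes identically. On this point your write-up is more careful than the paper's: \cref{lemma:Lambda_props} asserts only (non-strict) monotonicity, and you supply the missing strictness via strict convexity of $\Lambda_U$ for non-degenerate $U$, together with an explicit check of the boundary configurations ($v^-$ equal to the essential supremum, $v^+$ beyond the support), which the paper glosses over.

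One claim in your proposal should be corrected, however: the assertion that ``no purely resistance-based cancellation argument suffices.'' The indeterminacy you point out (for $r'$ beyond the larger resistance, both tails blow up after rescaling and their difference is $\infty-\infty$) is real but harmless. Once the strict gap between the two resistances is known, one first establishes $B^\epsilon=o(A^\epsilon)$ using an intermediate exponent, and then for every $r'$ one has $\epsilon^{-r'}(A^\epsilon-B^\epsilon)=\epsilon^{-r'}A^\epsilon\,(1-B^\epsilon/A^\epsilon)$, which has the same limits as $\epsilon^{-r'}A^\epsilon$; this is exactly the paper's argument and it is valid. Your bin-shrinking variant is a fine alternative, but it requires the same strictness input and so does not circumvent any difficulty that the subtraction argument cannot handle.
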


\begin{proof}
    If $U$ is deterministic, everything follows from the fact that $\PP(U^{(\tau(\epsilon))} \in v) = \charac{v}(\mu)$ and $\Lambda^*_U(x) = \pinfty$ for $x\ne\mu$.
    Assume that $U$ is not deterministic.
    If $v^-\le\mu\le v^+$, we already know that this probability converges to a positive limit as $\epsilon\to0$ (precisely, to $1$ if the inequalities are strict, else to $\frac12$), which implies that $\PP(U^{(\tau(\epsilon))} \in v)$ is almost regular with resistance~$0$. This concludes the case where $\mu\in v$ and the case where $\mu=v^+$ for which $\Lambda^*_U(v^+)=0$.
    If $\mu<v^-$, one can write \[\PP(U^{(\tau(\epsilon))} \in v) = \PP(U^{(\tau(\epsilon))} \ge v^-) - \PP(U^{(\tau(\epsilon))} \ge v^+)\] and both terms are almost regular with resistance $\tau_0\Lambda^*_U(v^-)$ and $\tau_0\Lambda^*_U(v^+)$ respectively,  according to \cref{prop:almost_regular_deviation}.
    Since $\Lambda^*_U$ is increasing over $[\mu,\pinfty)$, the second term is dominated by the first one, therefore $\PP(U^{(\tau(\epsilon))} \in v)$ is equivalent to $\PP(U^{(\tau(\epsilon))} \ge v^-)$, hence almost regular with resistance $\tau_0\Lambda^*_U(v^-) > 0$.
    If $\mu>v^+$, \[\PP(U^{(\tau(\epsilon))} \in v) = \PP(U^{(\tau(\epsilon))} < v^+) - \PP(U^{(\tau(\epsilon))} < v^-)\] is almost regular with resistance $\tau_0\Lambda^*_U(v^+) > 0$ using the same reasoning.
    We have identified the value of $r_U(v)$ for all cases and proven that it is equal to $0$ if and only if $\PP(U^{(\tau(\epsilon))} \in v)$ converges to a positive limit as $\epsilon\to0$.
    Moreover, the fact that $r_U(v) = \pinfty$ if and only if $\PP(U^{(\tau(\epsilon))} \in v) = 0$ stems from the second part of \cref{lemma:Lambda_props}.
\end{proof}

Beyond the theoretical requirement of regularity, it is also needed to lower bound the resistance of offset observations in order to ensure the stability of aligned states.
For common distributions, the \ac{lmgf} may be explicitly computed. For instance, given a gaussian variable $Z \sim \cN(\mu,\sigma^2)$, it holds that for $x>\mu$,
\begin{align*}
    \PP(Z^{(\tau)} > x)
    \underset{\tau\to\pinfty}{\sim} \frac{\sigma}{\sqrt{2\pi\tau}(x-\mu)} e^{-\tau\frac{(x-\mu)^2}{2\sigma^2}}
\end{align*}
hence $\Lambda^*_Z(v^-) = \frac{(x-\mu)^2}{2\sigma^2}$.
Note that this example shows that the introduction of almost regularity is necessary. Indeed, here $\PP(Z^{(\tau)} > x)$ is of order $\tau^{-1/2} e^{-\tau r}$ whereas regularity as defined in \cref{def:regularity} would impose an equivalent of order $e^{-\tau r}$.
In general, knowing that utilities are bounded \ac{as} within $[0,1]$, Hoeffding's inequality implies that for all~$x$
\begin{align}
    \label{eq:bound_Lambda}
    \Lambda^*_U(x) \ge 2(x-\mu)^2~.
\end{align}
If it is known that the variance of utilities is upper bounded by $\sigma^2>0$, then using Bernstein's inequality instead yields
\[\Lambda^*_U(x) \ge \frac{(x-\mu)^2}{2\sigma^2+2|x-\mu|}~,\]
which may be tighter when $\sigma$ is known to be small.
Precise knowledge of the utility distributions or more specific inequalities may improve the above lower bounds. This discussion falls under the theory of large deviations and is independent from our work.
Ultimately, \cref{eq:bound_Lambda} allows to lower bound the resistance of offset observations in the most general case and shall be used in the following.
Eventually, this inequality translates to a condition between $\tau_0$ and $\delta$ to ensure that such events are negligible in the \ac{ritel} process, see \cref{cond:R_0}.

\subsection{Almost Regular Perturbed Markov Processes}
\label{sec:ritel:arpmp}

In this section we show that the theory of \acp{rpmp} can be extended by replacing the regularity condition with almost regularity.
\Cref{thm:sss_min_gamma_AR} generalizes \cref{thm:sss_min_gamma} to this larger class of processes.

\begin{definition}[Almost Regular Perturbed Markov Process]
    \label{def:arpmp}
    Let $(P^\epsilon)$ be a \ac{pmp} over a state space $\cX$. $(P^\epsilon)$ is said to be an \ac{arpmp} if all transition probabilities are almost regular, that is
    \begin{align*}
        & \text{$\forall x,y\in\cX$, $\exists r\ge0:$}
        \begin{cases}
            \lim_{\epsilon\to0}\epsilon^{-r'}P_{x,y}^\epsilon = 0 & \text{for all $r'<r$,}\\
            \lim_{\epsilon\to0}\epsilon^{-r'}P_{x,y}^\epsilon = \pinfty & \text{for all $r'>r$.}
        \end{cases}
    \end{align*}
\end{definition}

As in \cref{def:regularity}, $r$ is unique if defined, is called the resistance of the transition and can be equal to $\pinfty$. This is the case in particular for non-existing transitions $P^\epsilon_{x,y}=0$.
Recall that \cref{eq:def:regularity} is stronger then \cref{eq:def:almost_regularity} and both notions of resistance coincide if regularity holds.
We define the notions of resistance $r(x\to y)$, resistance graph $\cG$, rooted tree, and potential $\gamma$ exactly as in \cref{sec:rpmp}.

When the process is almost regular we lose the information on the behavior of $\epsilon^{-r}P^\epsilon_{x,y}$. In particular we are unable to know how two transitions $P^\epsilon_{x,y}$ and $P^\epsilon_{x',y'}$ of equal resistance behave relatively, as their ratio can be any quantity that is sub-polynomial with regards to $\epsilon$.
This leads to not being able to describe as accurately the behavior of $\pi^\epsilon_x$ for states $x$ minimizing the potential as $\epsilon\to0$.
We are however able to adapt \cref{thm:sss_min_gamma} to show that states that do not minimize the potential vanish even under the looser assumption of almost regularity, so that the \acp{sss} are included in $\cX^\star$ although the inclusion may not be an equality.
\begin{theorem}
    \label{thm:sss_min_gamma_AR}
    Let $(P^\epsilon)$ be an \ac{arpmp} over a finite space $\cX$ such that $P^\epsilon$ is aperiodic and irreducible for all $\epsilon>0$ and denote $\pi^\epsilon$ its stationary distribution.
    Then, $\pi^\epsilon_x$ vanishes as $\epsilon\to0$ for any state that does not minimize $\gamma$.
    In particular, the set of \acp{sss} satisfies
    \[\cS
   \subset \cX^\star
    \eqdef \bLa x\in\cX~:~\gamma(x) = \min_\cX\gamma\bRa~.\]
\end{theorem}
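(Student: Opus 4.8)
The plan is to mirror the proof of \cref{thm:sss_min_gamma} (Young's Lemma~1), which expresses the stationary distribution through the Markov chain tree theorem, but to replace every use of an exact asymptotic equivalent $P^\epsilon_{x,y}\sim c\,\epsilon^{r}$ by the weaker one-sided information carried by almost regularity. The device that makes this transparent is the remark that, for a positive family $(X^\epsilon)$, almost regularity with resistance $r$ is equivalent to $\log X^\epsilon/\log\epsilon \to r$ as $\epsilon\to0$ (with the limit read as $\pinfty$ when $X^\epsilon=0$ or decays faster than any power). This follows directly from \cref{def:almost_regularity}: for $r'<r$ the definition forces $\epsilon^{-r'}X^\epsilon\to0$, hence eventually $\log X^\epsilon<r'\log\epsilon$, and dividing by $\log\epsilon<0$ gives $\liminf \log X^\epsilon/\log\epsilon\ge r'$; the case $r'>r$ gives the matching $\limsup$ bound, and letting $r'\to r$ yields the characterisation, the converse being identical.

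With this in hand I would record three closure properties, all immediate since $\log$ turns the relevant operations into sums. A finite product is almost regular with resistance the sum of the resistances, because $\log(\prod_i X_i^\epsilon)/\log\epsilon=\sum_i \log X_i^\epsilon/\log\epsilon$. A finite sum of non-negative almost regular families is almost regular with resistance the \emph{minimum} of the resistances: sandwich $\max_i X_i^\epsilon\le\sum_i X_i^\epsilon\le N\max_i X_i^\epsilon$ with $N$ the number of terms, take logarithms, divide by $\log\epsilon$, and use $\log N/\log\epsilon\to0$. Finally a quotient is almost regular with resistance the difference. Non-negativity is exactly what rules out cancellations, and all three rules accommodate infinite resistances unchanged.

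I would then invoke the Markov chain tree theorem: since $P^\epsilon$ is irreducible and aperiodic for $\epsilon>0$, its stationary distribution is $\pi^\epsilon_x=q^\epsilon_x/\sum_{w\in\cX}q^\epsilon_w$ with $q^\epsilon_x=\sum_{T\in\mathcal{T}_x}\prod_{(y\to z)\in T}P^\epsilon_{y,z}$, where $\mathcal{T}_x$ ranges over spanning trees of $\cX$ rooted at $x$. By the product rule each tree weight is almost regular with resistance $\sum_{(y\to z)\in T}r(y\to z)$, the resistance of $T$; by the sum rule $q^\epsilon_x$ is almost regular with resistance $\min_{T\in\mathcal{T}_x}\sum_{(y\to z)\in T}r(y\to z)=\gamma(x)$, precisely the potential of \cref{def:potential}. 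The denominator is then almost regular with resistance $\min_{w}\gamma(w)=\min_\cX\gamma$, so the quotient rule makes $\pi^\epsilon_x$ almost regular with resistance $\gamma(x)-\min_\cX\gamma$. When $x\notin\cX^\star$ this resistance is strictly positive, whence $\pi^\epsilon_x\to0$ (apply \cref{def:almost_regularity} at $r'=0$). As $\cX$ is finite this gives $\pi^\epsilon(\cX^\star)\to1$, and the stationary-distribution characterisation of \acp{sss} in \cref{def:stochastically_stable} forces the smallest set $\cS$ to satisfy $\cS\subset\cX^\star$.

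The main obstacle is conceptual rather than computational: almost regularity discards the multiplicative constant and any sub-polynomial factor (such as the $\tau^{-1/2}$ appearing in the Gaussian tail), so two families of equal resistance can no longer be compared. This is exactly why the argument yields only the inclusion $\cS\subset\cX^\star$ and not the equality of \cref{thm:sss_min_gamma}: for a state $x$ of minimal potential the uncontrolled factors in $q^\epsilon_x$ may still drive $\pi^\epsilon_x\to0$, so minimality of $\gamma$ becomes necessary but no longer sufficient for stochastic stability. The one technical point to verify carefully is the behaviour of the sum/min rule when some tree weights have infinite resistance or vanish identically; this causes no trouble, since an infinite resistance never attains the minimum unless every $x$-tree is infinite, in which case $\gamma(x)=\pinfty$ puts $x$ into $\cX^\star$ and there is nothing to prove.
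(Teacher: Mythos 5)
Your proposal is correct and follows essentially the same route as the paper: both rest on the Markov chain tree representation $\pi^\epsilon_x = p^\epsilon_x/\sum_{y}p^\epsilon_y$ (your $q^\epsilon_x$), both show that this tree-sum is almost regular with resistance $\gamma(x)$, and both conclude that $\pi^\epsilon_x\to0$ whenever $\gamma(x)>\min_\cX\gamma$; your logarithmic characterization of \cref{def:almost_regularity} and the resulting product/sum/quotient rules are simply a cleaner packaging of the paper's direct manipulation (shifting each edge resistance by $\pm\alpha$ and rescaling numerator and denominator by $\epsilon^{-\gamma'}$ for an intermediate $\gamma'$). One harmless slip in your closing remark: if $\gamma(x)=\pinfty$ while $\min_\cX\gamma<\pinfty$, then $x$ is \emph{not} in $\cX^\star$, so there is still something to prove there, but your own quotient rule with the $\pinfty$ convention gives resistance $\pinfty>0$ and hence $\pi^\epsilon_x\to0$, so the conclusion is unaffected.
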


As for \cref{thm:sss_min_gamma}, the potential can be computed over recurrence classes instead of the whole state space.
There is a slight loss of accuracy in \cref{thm:sss_min_gamma_AR} compared to \cref{thm:sss_min_gamma} as the behavior of $\pi^\epsilon_x$ for an individual state $x\in\cX^\star$ is unknown, whereas it it shown to converge to a positive limit if the process is a \ac{rpmp}.
This is not an issue in our context as we mainly care about the fact that the process asymptotically remains in favorable states, regardless of the distribution within these states.

\begin{proof}
    The proof of \cref{thm:sss_min_gamma} \cite[Lemma 1]{young93} states that $\pi^\epsilon$ can be expressed as
    \begin{align}
        \label{eq:thm:sss_min_gamma_AR:pi}
        \pi^\epsilon_x = \frac{p^\epsilon_x}{\sum_{y\in \cX}p^\epsilon_y}
    \end{align}
    for all $x\in\cX$, where
    \begin{align*}
        p^\epsilon_x \eqdef \sum_{T:x\text{-tree}} \prod_{(y\to z)\in T} P^\epsilon_{y,z}~.
    \end{align*}
    Proving \cref{thm:sss_min_gamma} consists in showing that for all $x\in\cX$, $\lim_{\epsilon\to0}\epsilon^{-\gamma(x)}p^\epsilon_x \in (0,\pinfty)$, \ie, $p^\epsilon_x$ is regular with resistance $\gamma(x)$.
    It follows that the quantities $\pi^\epsilon_x$ for all $x\in\cX^\star$ are all equivalent up to a constant as $\epsilon\to0$ and that $\pi^\epsilon_y$ for all $y\notin\cX^\star$ are negligible, hence the \acp{sss} are the states minimizing $\gamma$.
    When the process is almost regular instead, we may only show that $p^\epsilon_x$ is almost regular with resistance $\gamma(x)$. For all $x\in\cX$,
    \begin{align*}
        \begin{cases}
            \lim_{\epsilon\to0}\epsilon^{-\gamma'}P_{x,y}^\epsilon = 0 & \text{for all $\gamma'<\gamma(x)$,}\\
            \lim_{\epsilon\to0}\epsilon^{-\gamma'}P_{x,y}^\epsilon = \pinfty & \text{for all $\gamma'>\gamma(x)$.}
        \end{cases}
    \end{align*}
    Indeed, let $x\in\cX$ and $\gamma'<\gamma(x)$. Since $\gamma' < \gamma(x) \le \sum_{(y\to z)\in T} r(y\to z)$ for any $x$-tree $T$, there exists a constant $\alpha>0$ such that $\gamma' = \sum_{(y\to z)\in T} (r(y\to z)-\alpha)$.
    Then,
    \begin{align*}
        \epsilon^{-\gamma'} \prod_{(y\to z)\in T} P^\epsilon_{y,z}
        = \prod_{(y\to z)\in T} \epsilon^{-(r(y\to z)-\alpha)}P^\epsilon_{y,z}.
    \end{align*}
    Using \cref{eq:def:almost_regularity} with $r' = r(y\to z)-\alpha < r(y\to z)$, each term in this finite product goes to $0$ as $\epsilon\to0$. Summing over all $x$-tree, of which there is a finite amount, we conclude that $\lim_{\epsilon\to0} \epsilon^{-\gamma'}p^\epsilon_x = 0$.
    Let $\gamma'>\gamma(x)$ and $T$ be a minimal $x$-tree. Since $\gamma' > \gamma(x) = \sum_{(y\to z)\in T} r(y\to z)$, the exists a constant $\alpha>0$ such that $\gamma' = \sum_{(y\to z)\in T} (r(y\to z)+\alpha)$.
    Then,
    \begin{align*}
        \epsilon^{-\gamma'}p^\epsilon_x
        \ge \epsilon^{-\gamma'} \prod_{(y\to z)\in T} P^\epsilon_{y,z}
        = \prod_{(y\to z)\in T} \epsilon^{-(r(y\to z)+\alpha)}P^\epsilon_{y,z}.
    \end{align*}
    Using \cref{eq:def:almost_regularity} with $r' = r(y\to z)+\alpha > r(y\to z)$, each term in this finite product goes to $\pinfty$ as $\epsilon\to0$. We conclude that $\lim_{\epsilon\to0} \epsilon^{-\gamma'}p^\epsilon_x = \pinfty$.
    We have shown that $p^\epsilon_x$ is almost regular with resistance $\gamma(x)$, and can now rewrite \cref{eq:thm:sss_min_gamma_AR:pi} as
    \[\pi^\epsilon_x = \frac{\epsilon^{-\gamma'}p^\epsilon_x}{\sum_{y\in\cX^\star}\epsilon^{-\gamma'}p^\epsilon_{y} + \sum_{y\notin\cX^\star}\epsilon^{-\gamma'}p^\epsilon_y}\]
    for any $\gamma'\ge0$.
    If $x\notin\cX^\star$, choosing $\gamma'$ so that $\min\gamma < \gamma' < \gamma(x)$ makes the numerator go to $0$ and the denominator go to $\pinfty$, which implies that $\lim_{\epsilon\to0}\pi^\epsilon_x = 0$. This holds for all $x\notin\cX^\star$, therefore $\lim_{\epsilon\to0}\pi^\epsilon_{\cX^\star} = 1$.
\end{proof}

\subsection{Theoretical Analysis of RITEL}
\label{sec:ritel:results}

The previous sections introduce \ac{arpmp} processes and show that \ac{ritel} adheres to this framework.
We now discuss the convergence properties of \ac{ritel}. This section follows the same outline as \cref{sec:itel:results}.
We call state a triplet $(\blm,\bla,\blv) = (\lm_i,\la_i,\lv_i)_{i\in I}$ describing the states of all agents at a given time in the algorithm, and $\cX$ the set of all possible states.
As for \ac{itel}, we disregard the benchmark action and bin of discontent agents. We also identify the set of all-discontent states to a single state $D$.
Recall that our goal is to identify the \acp{sss} $\cS$, which are included in the states $\cX^\star$ that minimize the potential $\gamma$ according to \cref{thm:sss_min_gamma_AR} in the case where the perturbed process $P^\epsilon$ is aperiodic and irreducible.

Recall that \ac{itel} is shown to converge under the interdependence assumption \cref{assum:interdependence}.
In the context of \ac{ritel}, due to bin quantization and to the fact that offsets of only one bin are dismissed by agents, we require a slightly stronger assumption, that interdependence can always happen with a change of at least $3\delta$ in utility.
\begin{assumption}[$3\delta$-Interdependence]
\label{assum:3dinterdependence}
The game is $3\delta$-\emph{interdependent}, that is, given any action profile $\ba$ and any proper subset $\emptyset\subsetneq J\subsetneq I$ of agents, there exists an agent $i\notin J$ and a choice of actions $a'_J$ such that $|M_j(a'_i,a_{-i})-M_j(a)| \ge 3\delta$.
\end{assumption}
Note that since there is a finite number of agents and action profiles, a game that satisfies interdependence \cref{assum:interdependence} always satisfies $3\delta$-interdependence \cref{assum:3dinterdependence} for small enough $\delta$.

\subsubsection{Definitions}

Recall that when studying \ac{itel} in \cref{sec:itel:results}, we defined subsets $\cA\subset\cE\subset\cC\subset\cX$ of the state space $\cX$, namely $\cC$ the subset where all agents are content with a benchmark utility aligned with the benchmark action profile, $\cE$ the subset of equilibrium states, and $\cA$ the subset of optimal states.
With each inclusion comes a stronger sense of stability: states in $\cC$ are absorbing in the unperturbed process $P^0$, states in $\cE$ have a large outward resistance in the perturbed process $P^\epsilon$ and states in $\cA$ are absorbing in $P^\epsilon$.
Regarding \ac{ritel}, similar subsets are defined. However, their definition needs to be adapted to account for the loss of accuracy due to the introduction of random utilities and bin quantization.
Eventually, these subsets come with a ``weak'' and ``strong'' variant. Intuitively, the ``strong'' set contains the states that are ensured to hold the associated property, whereas the ``weak'' set contains the states that may hold the property and is identified with a subscript $\delta$.
We derive upper bounds on the potential of states that are part of the strong subset, and lower bound on the potential of the states that are not part of the weak subset. The weak set acts as a gray area that cannot be described as accurately.

Let us adapt \cref{def:aligned} to introduce the notion of strongly aligned states $\cC$ and weakly aligned states $\cC_\delta$ in \ac{ritel}. 
Since benchmark utilities are quantified into bins, one would intuitively define aligned states as states where the mean utility of each agent is within its benchmark bin. Such states shall be referred to as strongly aligned.
Recall that \ac{ritel} is designed so that agents observing an offset of only one bin from their benchmark disregard the change. For this reason we also introduce a weaker notion of alignment for agents with an offset of at most one bin between their benchmark and their mean.
\begin{definition}[Aligned States]
    \label{def:weak&strong_aligned}
    \hfill
    \begin{itemize}
        \item An agent with benchmark bin $\lv$ is \emph{strongly aligned} with an action profile $\ba$ if its average utility $\mu$ in $\ba$ satisfies $\mu\in\lv$.
        \item An agent with benchmark bin $\lv$ is \emph{weakly aligned} with an action profile $\ba$ if its average utility $\mu$ in $\ba$ satisfies $\mu\in\lv\pm\delta$. If $U(\ba)$ is not deterministic, it is also required that $\mu\ne(\lv-\delta)^-$.
    \end{itemize}
    An all-content state is strongly aligned (resp. weakly aligned) if all agents are strongly aligned (resp. weakly aligned) with the benchmark actions. 
    The set of weakly aligned states is denoted $\cC_\delta \subset \cX$ and the set of strongly aligned states is denoted $\cC \subset \cC_\delta$.
\end{definition}

Generally speaking, an agent is strongly aligned if its mean utility is contained in its benchmark bin, and weakly aligned if it is contained in its benchmark bin or in an adjacent bin.
There is one exception which is the case where the average utility $\mu$ lies exactly at the boundary between two bins $\nu$ and $\nu-\delta$ and the utility is not deterministic. In this case, $\nu-\delta$ is weakly aligned, $\nu$ is strongly aligned, but $\nu+\delta$ is not weakly aligned. This is due to the fact that as $\tau\to\pinfty$, $U^{(\tau)}$ will tend to be observed in both $\nu$ and $\nu-\delta$ with equal probability, and the resistance for observing $U^{(\tau)}$ in $v+\tau$ can be explicitly lower bounded since it requires an offset of at least $\delta$ from the true mean.
This technical detail does not influence the overall reasoning.
In any case, given an action profile $\ba$, there is exactly one strongly aligned state with benchmark actions $\ba$.
We shall see that states in $\cC_\delta$ are absorbing in $P^0$, but their outward resistance may be arbitrarily close to $0$, whereas the outward resistance of states in $\cC$ can be lower bounded.

We now adapt the notion of equilibrium given in \cref{def:equilibrium}.
In \ac{itel}, the notion of equilibrium for aligned states coincides with the notion of equilibrium for their benchmark action profile.
In \ac{ritel}, we need to take into account quantization and weakly aligned states. In particular, the equilibrium inequality itself needs to be relaxed by allowing small improvements.
We first define this notion for action profiles, then for weakly aligned states.
\begin{definition}[$\rho$-Equilibrium Action Profile]
    \label{def:equilibrium_delta_action}
    An action profile $\ba$ with resulting average utilities $\bmu$ is a $\rho$-equilibrium if for all agent $i$, $i$ is at a $\rho$-equilibrium, \ie, for any action $a'_i\ne a_i$, $M_i(a'_i,a_{-i}) \le \mu_i+\rho$.
\end{definition}
An action profile is at a $\rho$-equilibrium if no agent can explore such that it would improve its mean utility by at least $\rho$. When $\rho=0$, this notion coincides with the standard notion of equilibrium.
Regarding weakly aligned states, notice that for a given action profile $\ba$ there are several weakly aligned states with benchmark $\bla=\ba$ but with different benchmark bins $\blv$---although only one strongly aligned state. How stable the state is depends on $\blv$ since higher benchmarks are harder to leave, so the definition of equilibrium takes both $\bla$ and $\blv$ into account.
\begin{definition}[$\rho$-Equilibrium State]
    \label{def:equilibrium_delta_state}
    A weakly aligned state with benchmark actions $\bla$ and utility bins $\blv$ is a $\rho$-equilibrium if for all agent $i$, $i$ is at a $\rho$-equilibrium, \ie, for any action $a_i\ne \la_i$, $N_i(a_i,\la_{-i}) \le \lv_i+\rho$.
    We denote $\cE_\delta \subset \cC_\delta$ the set of weakly aligned $\delta$-equilibrium states and $\cE \subset \cE_\delta\cap\cC$ the set of strongly aligned equilibrium states.
\end{definition}
When $\rho=\delta$, a state is at a $\delta$-equilibrium if no agent can explore in a way so that it would observe a mean utility high enough that it can accept it, hence the notion of $\delta$-equilibrium state reflects how stable a state it is in \ac{ritel}.
On the other hand, the notion of $\rho$-equilibrium for action profiles is completely unrelated to the \ac{ritel} algorithm and instead reflects the intrinsic equilibrium properties of $\ba$ in the underlying game.
This notion is only used to translate the convergence statement of \ac{ritel} in terms of algorithm states to a more concrete statement involving action profiles and their mean utilities instead of benchmark bins.
Although closely related, both notions of equilibrium do not correspond directly, that is, a $\rho$-equilibrium state does not necessarily imply a $\rho$-equilibrium action profile. In fact, \cref{def:equilibrium_delta_action,def:equilibrium_delta_state} are designed so that an equilibrium action profile translates well into an equilibrium state, whereas an equilibrium state translates into a weaker sense of equilibrium action profile, see \cref{lemma:ritel:correspondence_equilibrium}.

It remains to discuss optimal states.
A weakly aligned agent is said to be optimized if its benchmark utility $\lv$ is optimal, \ie, $\{1\}$, and $\delta$-optimized if $\lv$ is $\delta$-optimal, \ie, either $[1-\delta,1)$ or $\{1\}$.
We denote $\cA_\delta \subset \cE_\delta$ the set of weakly aligned optimal states, where all agents are optimized.
The inclusion is due to the fact that if an agent is $\delta$-optimal, then it cannot improve its utility by more than $\delta$.
A ``strong'' set $\cA$ shall be defined later to identify the absorbing states of the process.

The following lemma states the correspondences that are required to convert the convergence result of \ac{ritel} in terms of states and benchmark bins to a convergence result in terms of action profiles and mean utilities.
\begin{lemma}
    \label{lemma:ritel:correspondence_equilibrium}
    The following holds.
    \begin{enumerate}[(i)]
        \item If $\ba$ is an equilibrium action profile, then the corresponding strongly aligned state $x$ is in $\cE$.
        \item If $x\in\cA_\delta$, then its benchmark action profile $\bla$ is $\delta$-optimal.
        \item If $x\in\cE_\delta$, then its benchmark action profile $\bla$ is a $3\delta$-equilibrium.
    \end{enumerate}
\end{lemma}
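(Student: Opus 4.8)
The plan is to prove all three parts by direct manipulation of the bin arithmetic set up in \cref{sec:ritel_def}, relying on two elementary facts: the binning map $|\cdot|$ is non-decreasing, so $u\le u'$ implies $|u|\le|u'|$ as bins; and a bin inequality $N_i(\cdot)\le v$ unpacks into the numerical bound $M_i(\cdot)<v^+$, where $v^+=v^-+\delta$ and $(v+k\delta)^\pm=v^\pm+k\delta$. Throughout I write $\mu_i=M_i(\bla)$ for the mean utility of agent $i$ under the benchmark profile, and I use that the only relevant action profile in the lemma is the benchmark one.

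For (i), I would start from the corresponding strongly aligned state $x$, for which $\bla=\ba$ and $\lv_i=N_i(\ba)=|\mu_i|$, so that $x\in\cC$ by construction and it only remains to check the $\delta$-equilibrium inequality of \cref{def:equilibrium_delta_state}. Fixing an agent $i$ and an action $a'_i\ne\la_i$, the equilibrium property of $\ba$ gives $M_i(a'_i,\la_{-i})\le\mu_i$, and applying $|\cdot|$ yields $N_i(a'_i,\la_{-i})\le|\mu_i|=\lv_i\le\lv_i+\delta$, so that $x$ is a strongly aligned $\delta$-equilibrium state, i.e. $x\in\cE$. For (ii), given $x\in\cA_\delta$ every agent is optimized, i.e. $\lv_i=\{1\}$, so weak alignment confines $\mu_i$ to $\{1\}$ or the adjacent bin $[1-\delta,1)$ (the bin $\{1\}+\delta$ being out of range), i.e. $\mu_i\ge1-\delta$; hence each mean bin $N_i(\bla)=|\mu_i|$ is $\delta$-optimal, which is exactly the assertion that $\bla$ is $\delta$-optimal.

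Part (iii) is the crux, and the place where the factor $3\delta$ emerges as the sum of two separate $\delta$-losses. Fixing $x\in\cE_\delta$, an agent $i$ and an action $a'_i\ne\la_i$, the target is $M_i(a'_i,\la_{-i})\le\mu_i+3\delta$. The weak-alignment hypothesis lower-bounds the benchmark mean: since $\mu_i$ lies in one of $\lv_i-\delta,\lv_i,\lv_i+\delta$, the lowest having lower edge $(\lv_i-\delta)^-=\lv_i^--\delta$, I get $\lv_i^-\le\mu_i+\delta$. The $\delta$-equilibrium hypothesis upper-bounds the deviating mean: $N_i(a'_i,\la_{-i})\le\lv_i+\delta$ reads off as $M_i(a'_i,\la_{-i})<(\lv_i+\delta)^+=\lv_i^-+2\delta$. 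Chaining the two bounds gives $M_i(a'_i,\la_{-i})<\lv_i^-+2\delta\le\mu_i+3\delta$, which is the desired $3\delta$-equilibrium inequality for $\bla$.

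I expect the only real friction to be the boundary bin $\{1\}$, where $\lv_i+\delta$ is out of range, so the upper-edge reading must be replaced by the a.s. bound $M_i(\cdot)\le1$; but when $\lv_i=\{1\}$ weak alignment already forces $\mu_i\ge1-\delta$, whence $\mu_i+3\delta\ge1\ge M_i(a'_i,\la_{-i})$ directly, closing that case immediately. Likewise, the non-degeneracy caveat of \cref{def:weak&strong_aligned} (excluding $\mu_i=(\lv_i-\delta)^-$ for non-deterministic $U(\bla)$) can only strengthen the lower bound on $\mu_i$ and is therefore harmless for all three inequalities.
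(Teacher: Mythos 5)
Your proof is correct and follows essentially the same bin-arithmetic argument as the paper in all three parts, including the same chaining of the weak-alignment bound $\lv_i^-\le\mu_i+\delta$ with the $\delta$-equilibrium bound $M_i(a'_i,\la_{-i})<\lv_i^-+2\delta$ in part (iii). One small slip in (i): the inequality you derive, $N_i(a'_i,\la_{-i})\le\lv_i$, is exactly the ($\rho=0$) equilibrium condition defining $\cE$, so you should conclude directly from it; the subsequent weakening to $\lv_i+\delta$ and the label ``strongly aligned $\delta$-equilibrium state'' do not by themselves give $x\in\cE$, since $\cE$ consists of strongly aligned \emph{equilibrium} states, not $\delta$-equilibrium states.
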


\begin{proof}
    \hfill
    \newline\noindent\textbf{(i).}
    Let $\ba$ be an equilibrium action profile of mean utilities $\bmu$ with bins $\bnu$, and $x=(\blm,\bla,\blv)\in\cC$ with $\bla=\ba$ and $\blv=\bnu$. For all agent $i$ and action $a'_i\ne a_i=\la_i$,
    $$N_i(a'_i,\la_{-i}) = |M_i(a'_i,a_{-i})| \le |\mu_i| = \lv_i$$
    since $M_i(a'_i,a_{-i})) \le \mu_i$ by assumption on $\ba$, hence $i$ is at an equilibrium in $x$.
    Therefore $x$ is an equilibrium state.

    \medskip\noindent\textbf{(ii).}
    Let $x=(\blm,\bla,\blv)\in\cA_\delta$ and denote $\bmu$ the mean utilities of $\bla$.
    For all agent $i$, since $i$ is optimized in $x$ and $x$ is weakly aligned, $\mu_i \ge \lv_i^--\delta = 1-\delta$, hence $i$ is $\delta$-optimized by $\bla$.
    Therefore $\bla$ is $\delta$-optimal.
    
    \medskip\noindent\textbf{(iii).}
    Let $x=(\blm,\bla,\blv)\in\cE_\delta$ and denote $\bmu$ the mean utilities of $\bla$.
    For all agent $i$ and action $a'_i\ne \la_i$, since $i$ is at a $\delta$-equilibrium in $x$ and $x$ is weakly aligned, $M_i(a'_i,\la_{-i}) \le N_i(a'_i,\la_{-i})^-+\delta \le \lv_i^-+2\delta \le \mu_i+3\delta$, hence $i$ is at a $3\delta$-equilibrium in $\bla$.
    Therefore $\bla$ is a $3\delta$-equilibrium.
\end{proof}

We now move on to the notions of welfare and stability, along with their virtual counterparts, defined in \cref{def:itel:W&S,def:itel:tW&tS}. As for the notion of equilibrium, virtual welfare and stability are defined both in the context of states and in the context of action profiles. True welfare and stability is however used only to state the final result in terms of action profiles, in the special case of linear functions $F$ and $G$ for simplicity, hence they are only defined in the context of action profiles.
\begin{definition}[Welfare and Stability of an Action Profile]
    \label{def:ritel:W&S_action}
    Let $\ba$ be an action profile with mean utilities~$\bmu$. We define
    \begin{itemize}
        \item $W(\ba) \eqdef \sum_i\mu_i$,
        \item $S(\ba) \eqdef \max \bLa M_i(a'_i,a_{-i})-\mu_i ~|~ \textrm{$i\in I$ and $a'_i\ne a_i$ such that $M_i(a'_i,a_{-i}) > \mu_i$}\bRa$.
    \end{itemize}
    $S(\ba)$ is defined only if $\ba$ is not an equilibrium.
\end{definition}

\begin{definition}[Virtual Welfare and Stability of an Action Profile]
    \label{def:ritel:tW&tS_action}
    Let $\ba$ be an action profile with mean utilities~$\bmu$. We define
    \begin{itemize}
        \item $\tW_+(\ba) \eqdef 1 - \sum_iF(\mu_i+\delta)$,
        \item $\tW_-(\ba) \eqdef 1 - \sum_iF(\mu_i-\delta)$,
        \item $\tS_+(\ba) \eqdef 1 - \min \bLa G\bLp\mu_i-\delta, M_i(a'_i,a_{-i})+\delta\bRp ~|~ \textrm{$i\in I$ and $a'_i\ne a_i$ such that $M_i(a'_i,a_{-i}) > \mu_i$}\bRa$,
        \item $\tS_-(\ba) \eqdef 1 - \min \bLa G\bLp\mu_i+\delta, M_i(a'_i,a_{-i})-\delta\bRp ~|~ \textrm{$i\in I$ and $a'_i\ne a_i$ such that $M_i(a'_i,a_{-i}) > \mu_i+3\delta$}\bRa$.
    \end{itemize}
    $\tS_+(\ba)$ is defined only if $x$ is not a $\delta$-equilibrium and $\tS_-(\ba)$ only if $x$ is not a $3\delta$-equilibrium.
\end{definition}

The subscript $+$ (resp. $-$) indicates that the quantity defined acts as an upper bound (resp. lower bound) on the virtual welfare or stability. The introduction of these offset definitions are motivated by the quantization of \ac{ritel}. For instance, for a state $x=(\blm,\bla,\blv)\in\cC$ there may be an offset of $\delta$ at most between the utilities $\blv^-$ used for the functions $F$ and $G$ and the mean utilities resulting from the benchmark actions $\bla$.
The link between true welfare and stability and these quantities is discussed in the linear case at the end of \cref{sec:ritel:results:results}.

Regarding the virtual welfare and stability of states, the randomness of the observation makes the analysis of acceptations from a content state more complex. Indeed, an agent of benchmark $\lv$ may accept the mean bin $\nu$ of its exploration with resistance equal to $G(\lv,\nu)$ when $\nu \ge \lv + 2\delta$. It may also accept any higher bin $v$ with resistance equal to $r_U(v) + G(\lv,v)$ where $U$ is the distribution of the observed utility. This resistance may be lower than that of accepting $\nu$ if the non-increasing nature of $G(\lv,\cdot)$ compensate for the addition of $r_U(v)$.
For this reason, we also introduce two quantities $\tS_+$ and $\tS_-$ that act as upper and lower bounds on the virtual stability of a state.
\begin{definition}[Virtual Welfare and Stability of a State]
    \label{def:ritel:tW&tS_state}
    Let $x=(\blm,\bla,\blv)\in\cC_\delta$. We define
    \begin{itemize}
        \item $\tW(x) \eqdef 1 - \sum_i F(\lv_i)$,
        \item $\tS_+(x) \eqdef 1 - \min \bLa G(\lv_i,N_i(a_i,\la_{-i})+\delta) ~|~ \textrm{$i\in I$ and $a_i\ne\la_i$ such that $N_i(a_i,\la_{-i}) \ge \lv_i+\delta$}\bRa$,
        \item $\tS_-(x) \eqdef 1 - \min \bLa G(\lv_i,N_i(a_i,\la_{-i})) ~|~ \textrm{$i\in I$ and $a_i\ne\la_i$ such that $N_i(a_i,\la_{-i}) \ge \lv_i+2\delta$}\bRa$.
    \end{itemize}
    $\tS_+(x)$ is defined only if $x$ is not an equilibrium and $\tS_-(x)$ only if $x$ is not a $\delta$-equilibrium.
\end{definition}

Notice that $\tS_-(x) \le \tS_+(x)$ when both are defined, \ie, when $x$ is not a $\delta$-equilibrium.
The following lemma states the correspondences that are required to convert the convergence result of \ac{ritel} in terms of states and their virtual welfare and stability to a convergence result in terms of action profiles and their welfare and stability.
\begin{lemma}
    \label{lemma:ritel:correspondence_tW&tS}
    Let $x$ be a state with benchmark actions $\bla$. The following holds.
    \begin{enumerate}[(i)]
        \item If $x\in\cC_\delta$, $\tW(x) \le \tW_+(\bla)$.
        \item If $x\in\cC$, $\tW(x) \ge \tW_-(\bla)$.
        \item If $x\in\cC_\delta\setminus\cE_\delta$, $\tS_-(x) \ge \tS_-(\bla)$.
        \item If $x\in\cC\setminus\cE$, $\tS_+(x) \le \tS_+(\bla)$.
    \end{enumerate}
\end{lemma}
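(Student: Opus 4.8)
The plan is to split the four items into two groups. The welfare bounds (i)--(ii) follow immediately from the monotonicity of $F$ together with the alignment bounds of \cref{def:weak&strong_aligned}, whereas the stability bounds (iii)--(iv) require a short \emph{minimizer-transfer} argument between the two constrained minima that define the state quantity (\cref{def:ritel:tW&tS_state}) and the action-profile quantity (\cref{def:ritel:tW&tS_action}), exploiting that $G$ is non-decreasing in its first (benchmark) argument and non-increasing in its second (observed) argument. Throughout I write $M'_i = M_i(a'_i,\la_{-i})$ and $N'_i = |M'_i|$, and use the bin arithmetic $N_i'^- \le M'_i < N_i'^- + \delta$ repeatedly.

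For (i) and (ii), recall $\tW(x) = 1 - \sum_i F(\lv_i^-)$ and $\tW_\pm(\bla) = 1 - \sum_i F(\mu_i \pm \delta)$, so since $F$ is non-increasing it suffices to compare the arguments coordinatewise. If $x\in\cC_\delta$ is weakly aligned then $\mu_i \ge \lv_i^- - \delta$, hence $\lv_i^- \le \mu_i + \delta$ and $F(\lv_i^-) \ge F(\mu_i+\delta)$, which gives (i) after summing over $i$. If $x\in\cC$ is strongly aligned then $\mu_i < \lv_i^- + \delta$, hence $\mu_i - \delta < \lv_i^-$ and $F(\lv_i^-) \le F(\mu_i - \delta)$, which gives (ii). These are one line each once the alignment inequalities are written down.

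For the stability bounds, each quantity is $1$ minus a minimum of $G$-values over admissible exploration indices $(i,a'_i)$, so proving (iii), $\tS_-(x)\ge\tS_-(\bla)$, amounts to showing the defining minimum for $\tS_-(x)$ is at most that for $\tS_-(\bla)$. I take an index $(i,a'_i)$ minimizing $\tS_-(\bla)$, which satisfies $M'_i > \mu_i + 3\delta$; weak alignment gives $M'_i > \lv_i^- + 2\delta$, and the bin arithmetic $N_i'^- > M'_i - \delta$ upgrades this to $N'_i \ge \lv_i + 2\delta$, so the index is admissible for $\tS_-(x)$. Comparing the $G$-terms via monotonicity, $\lv_i^- \le \mu_i + \delta$ (first argument) and $N_i'^- > M'_i - \delta$ (second argument) together give $G(\lv_i^-,N_i'^-) \le G(\mu_i+\delta,M'_i-\delta)$, so the state minimum is bounded above by the action-profile minimum, proving (iii). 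Part (iv), $\tS_+(x)\le\tS_+(\bla)$, is the mirror image with the starting side reversed: I take an index minimizing $\tS_+(x)$, with $N'_i \ge \lv_i + \delta$; strong alignment yields $M'_i \ge N_i'^- \ge \lv_i^- + \delta > \mu_i$, so the index is admissible for $\tS_+(\bla)$, and the $G$-terms now compare in the opposite direction, $G(\mu_i-\delta,M'_i+\delta) \le G(\lv_i^-,N_i'^-+\delta)$, using $\mu_i - \delta \le \lv_i^-$ and $M'_i \ge N_i'^-$.

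The only real obstacle is the bookkeeping of the transfer step: one must check that the index achieving the minimum on the side we start from is genuinely admissible for the constraint set on the other side, whose thresholds differ ($N'_i \ge \lv_i + 2\delta$ versus $M'_i > \mu_i + 3\delta$ in (iii), and $N'_i \ge \lv_i + \delta$ versus $M'_i > \mu_i$ in (iv)), and this is exactly where the alignment bounds and the bin-width arithmetic are consumed. A secondary point is definedness. The quantities $\tS_-(x)$ and $\tS_+(x)$ are defined since $x\notin\cE_\delta$ and $x\notin\cE$ respectively; $\tS_+(\bla)$ is defined because $\bla$ is not an equilibrium, which follows from the contrapositive of \cref{lemma:ritel:correspondence_equilibrium}(i) as $x$ is the unique strongly aligned state over $\bla$. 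For (iii), if $\bla$ happens to be a $3\delta$-equilibrium then $\tS_-(\bla)$ is undefined and the assertion is vacuous (equivalently, the empty minimum is $+\infty$, so $\tS_-(\bla) = -\infty$), and the content of (iii) is exactly the case where both sides are defined. Finally, the boundary technicality in the definition of weak alignment is irrelevant here, since only the one-sided bounds $\mu_i \ge \lv_i^- - \delta$ and $\mu_i < \lv_i^- + \delta$ are ever used.
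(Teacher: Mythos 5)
Your proof is correct and follows essentially the same route as the paper's: monotonicity of $F$ plus the one-sided alignment bounds for (i)--(ii), and for (iii)--(iv) the same minimizer-transfer argument (starting from the minimizer of $\tS_-(\bla)$ in (iii) and of $\tS_+(x)$ in (iv), checking admissibility via bin arithmetic, then comparing the $G$-terms through its monotonicity). Your handling of the empty-minimum and definedness issues also matches the paper's treatment, so there is nothing to fix.
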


\begin{proof}
    Let $x=(\blm,\bla,\blv)\in\cC_\delta$ and denote $\bmu$ the mean utilities of $\bla$.
    Recall that $F$ and $G$ are defined over bins by replacing the bins with their lower edge: $F(v)=F(v^-)$ and $G(\lv,v)=G(\lv^-,v^-)$.
    
    \medskip\noindent\textbf{(i)}
    If $x$ is weakly aligned, then $\mu_i \ge \lv_i^--\delta$ for each agent $i$. $F$ is non-increasing, hence
    \[1-\tW(x)
    = \sum_i F(\lv_i^-)
    \ge \sum_i F(\mu_i+\delta)
    = 1-\tW_+(\bla)~.\]

    \medskip\noindent\textbf{(ii)}
    If $x$ is strongly aligned, then $\mu_i \le \lv_i^-+\delta$ for each agent $i$. $F$ is non-increasing, hence
    \[1-\tW(x)
    = \sum_i F(\lv_i^-)
    \le \sum_i F(\mu_i-\delta)
    = 1-\tW_-(\bla)~.\]

    \medskip\noindent\textbf{(iii)}
    If the minimum in the definition of $\tS_-(\bla)$ is over an empty set, the result reads $\tS(x) \ge \minfty$ which is immediate.
    Else, denote $a_i\ne\la_i$ the action that achieves the minimum. $M_i(a_i,\la_{-i}) > M_i(\bla)+3\delta$ so $N_i(a_i,\la_{-i}) \ge N_i(\bla)+3\delta \ge \lv_i+2\delta$ since $x$ is weakly aligned, which also implies that $M_i(\bla) \ge \lv_i^--\delta$. $G$ is non-decreasing in the first argument and non-increasing in the second, hence
    \[1-\tS_-(\bla)
    = G\bLp M_i(\bla)+\delta, M_i(a_i,\la_{-i})-\delta \bRp
    \ge G\bLp \lv_i^-, M_i(a_i,\la_{-i})-\delta \bRp
    \ge G\bLp \lv_i^-, N_i(a_i,\la_{-i})^- \bRp
    \ge 1-\tS_-(x)\]
    where the last inequality holds due to the fact that $N_i(a_i,\la_{-i}) \ge \lv_i+2\delta$.

    \medskip\noindent\textbf{(iv)}
    Denote $a_i\ne\la_i$ the action that achieves the minimum in the definition of $\tS_+(x)$ and $v_i=N_i(a_i,\la_{-i})$, so that $\tS_+(x) = 1-G(\lv_i^-,v_i^-+\delta)$. $x$ is strongly aligned so $M_i(\bla) < \lv_i^-+\delta$. $G$ is non-decreasing in the first argument and non-increasing in the second, hence
    \[1-\tS_+(x) = G\bLp \lv_i^-, v_i^-+\delta \bRp
    \ge G\bLp M_i(\bla)-\delta, v_i^-+\delta \bRp
    \ge G\bLp M_i(\bla)-\delta, M_i(a_i,\la_{-i})+\delta \bRp
    \ge 1-\tS_+(\bla)\]
    where the last inequality holds due to the fact that $v_i \ge \lv_i+\delta$, hence $M_i(a_i,\la_{-i}) \ge v_i^- \ge \lv_i^-+\delta > M_i(\bla)$.
\end{proof}

\subsubsection{Noise Resistance}

Let us discuss the resistance associated with an agent's decision. In \ac{itel} such resistances were derived directly from the policies given in \cref{tab:itel:policies}.
Here, the resistance of observing a utility enabling a given decision needs to be added to the resistance of the decision itself which is derived from \cref{tab:ritel:policies}.
In the example where a content agent who explored accepts its exploration with new benchmark bin $v$, recall that the probability of such decision given the chosen action profile is $\PP\bLp U^{(\tau)}\in v \bRp \epsilon^{G(\lv,v)}$. The associated resistance is $r_U(v) + G(\lv,v)$ where $r_U(v)$ is the resistance associated with $U^{(\tau)}$ belonging to $v$, defined in \cref{corol:almost_regular_deviation}.
In particular, recall that observing the mean bin has no resistance, whereas the resistance of observing other bins is lower bounded using \cref{eq:bound_Lambda}.
We are particularly interested in lower bounding the resistance of offset observation, which we recall corresponds to observing an average utility that is not weakly aligned with its theoretical mean.
\begin{lemma}
    \label{lemma:ritel:offset_observation}
    Let $U$ be a utility distribution with support in $[0,1]$ and mean bin $\nu$. Then for all $v\ne\nu\pm\delta$,
    \begin{align*}
        r_U(v) \ge R_0 \eqdef 2\tau_0\delta^2~.
    \end{align*}
\end{lemma}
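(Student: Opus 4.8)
The plan is to plug the explicit expression for $r_U(v)$ supplied by \cref{corol:almost_regular_deviation} into the general lower bound $\Lambda^*_U(x)\ge 2(x-\mu)^2$ from \cref{eq:bound_Lambda}. The key geometric observation is that the hypothesis $v\ne\nu\pm\delta$, where $\nu=|\mu|$ denotes the mean bin, forces $v$ to lie at least two bins away from $\nu$, so that either $v\ge\nu+2\delta$ or $v\le\nu-2\delta$. Since $\mu\in\nu$, that is $\nu^-\le\mu<\nu^+=\nu^-+\delta$, the mean sits within $\delta$ of the lower edge $\nu^-$; hence whichever edge of $v$ enters the piecewise formula for $r_U(v)$ will be separated from $\mu$ by at least $\delta$, and that $\delta$-gap is exactly what converts into the bound $R_0=2\tau_0\delta^2$.

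More precisely, I would first treat the case $v\ge\nu+2\delta$. Here $v^-\ge\nu^-+2\delta$, and since $\mu<\nu^-+\delta$ we get $v^--\mu>\delta$; in particular $\mu<v^-$, so the first branch of \cref{corol:almost_regular_deviation} gives $r_U(v)=\tau_0\Lambda^*_U(v^-)$. Applying \cref{eq:bound_Lambda} and then $v^--\mu\ge\delta$ yields $r_U(v)\ge 2\tau_0(v^--\mu)^2\ge 2\tau_0\delta^2=R_0$. The case $v\le\nu-2\delta$ is symmetric: one checks $v^+\le\nu^--\delta<\mu$, so the third branch gives $r_U(v)=\tau_0\Lambda^*_U(v^+)$ with $\mu-v^+\ge\delta$, and the identical computation gives $r_U(v)\ge R_0$.

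Finally I would dispose of the deterministic case, where the exception noted in \cref{corol:almost_regular_deviation} applies: there $\Lambda^*_U$ equals $\pinfty$ away from $\mu$, so $r_U(v)=\pinfty\ge R_0$ trivially, and the boundary subtlety at $\mu=v^+$ only raises the resistance and is in any case excluded since $v$ is two bins from $\nu$. I do not expect a genuine obstacle here; the only step requiring care is the bookkeeping of bin edges, namely verifying that a two-bin separation in $v$ really produces a $\delta$-gap between the active edge of $v$ and $\mu$ given that $\mu$ may lie anywhere inside $\nu$. This also explains why the constant cannot be improved in general: when $v$ is exactly two bins away and $\mu$ sits at the far edge of its own bin, the gap is exactly $\delta$, so $2\tau_0\delta^2$ is the best uniform lower bound the argument delivers.
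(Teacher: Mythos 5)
Your proposal is correct and follows essentially the same route as the paper: split on $v\ge\nu+2\delta$ versus $v\le\nu-2\delta$, use the explicit formula for $r_U(v)$ from \cref{corol:almost_regular_deviation}, convert the two-bin separation into a gap of at least $\delta$ between $\mu$ and the relevant bin edge, and conclude via the Hoeffding bound \cref{eq:bound_Lambda}, with the deterministic case disposed of separately (the paper does this even more simply by noting $\PP(U\in\nu)=1$). The only cosmetic difference is that the paper invokes monotonicity of $\Lambda^*_U$ to evaluate the bound at $\mu\pm\delta$, whereas you apply \cref{eq:bound_Lambda} directly at the bin edge $v^\mp$; these are the same computation.
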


\begin{proof}
    Let $U$ be a utility distribution, $\mu$ its mean utility and $\nu=|\mu|$ its mean bin.
    If $U$ is deterministic, the result is immediate since $\PP(U\in\nu) = 1$.
    Else, according to \cref{corol:almost_regular_deviation,eq:bound_Lambda}, the resistance associated with observing bin $v\ge\nu+2\delta$ is
    \begin{align*}
        r_U(v)
        = \tau_0\Lambda^*_U(v^-)
        \ge \tau_0\Lambda^*_U(\mu+\delta)
        \ge 2\tau_0(\mu+\delta-\mu)^2
        = 2\tau_0\delta^2 
    \end{align*}
    as $\Lambda^*_U$ is increasing over $[\mu,\pinfty)$ and $\mu+\delta \le \nu^++\delta = \nu^-+2\delta \le v^-$.
    The same reasoning holds when $v\le\nu-2\delta$ and then $r_U(v) \ge \tau_0\Lambda^*_U(\mu-\delta) \ge 2\tau_0\delta^2$.
\end{proof}

We now define the noise resistance of a state, which is the resistance associated with one agent observing a utility that is not aligned with its benchmark, in the event where the benchmark action profile is selected.
\begin{definition}[Noise Resistance]
    \label{def:ritel:noise_stability}
    Let $x=(\blm,\bla,\blv)\in\cX$. We define the \emph{noise resistance} of $x$ as
    \begin{align*}
        \tR(x)
        \eqdef \min \bLa r_{U_i(\bla)}(v) ~|~ v\ne\lv_i\pm\delta \bRa~.
    \end{align*}
\end{definition}

The minimum is considered over bins $v\ne\lv_i\pm\delta$ as these bins are the ones that imply a change of behavior if observed, according to \cref{tab:ritel:update_policy}.
This resistance is closely related to the alignment property. In order to identify the easy edges and outward resistance of states, we establish bounds on $\tR$ depending on whether or a state is aligned.
\begin{lemma}
    \label{lemma:ritel:noise_stability}
    Let $x=(\blm,\bla,\blv)\in\cX$. The following holds.
    \begin{enumerate}[(i)]
        \item If $x\notin\cC_\delta$, $\tR(x) = 0$.
        \item If $x\in\cC_\delta$, $\tR(x) > 0$.
        \item If $x\in\cC$, $\tR(x) \ge R_0$.
    \end{enumerate}
\end{lemma}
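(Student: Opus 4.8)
The plan is to handle the three parts in increasing order of the alignment hypothesis, in each case reducing everything to the exact characterisation of when an offset observation has zero resistance. Write $\mu_i = M_i(\bla)$ and $\nu_i = N_i(\bla) = |\mu_i|$ for the mean utility and mean bin of agent $i$ under the benchmark profile. By \cref{corol:almost_regular_deviation}, $r_{U_i(\bla)}(v) = 0$ precisely when $\mu_i \in v$, with exactly one extra zero-resistance bin, namely $\nu_i - \delta$, in the degenerate situation where $U_i(\bla)$ is non-deterministic and $\mu_i = \nu_i^-$ lies on the lower edge of its bin (then $\mu_i = (\nu_i-\delta)^+$ and the corollary assigns resistance $0$ there too); every other bin has strictly positive resistance, and bins beyond the support have resistance $\pinfty$. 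I use throughout that there are finitely many agents and, since $\delta$ is the inverse of an integer, finitely many bins, so the minimum defining $\tR(x)$ is attained over a finite index set.

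For (iii), if $x\in\cC$ then each agent is strongly aligned, so $\mu_i\in\lv_i$ and hence $\lv_i=\nu_i$. The bins excluded from the minimum are exactly those with $v\ne\lv_i\pm\delta = \nu_i\pm\delta$, so applying \cref{lemma:ritel:offset_observation} to $U_i(\bla)$ gives $r_{U_i(\bla)}(v)\ge R_0$ for every admissible $v$ and every $i$; taking the minimum yields $\tR(x)\ge R_0$.

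For (ii), suppose $x\in\cC_\delta$, so each agent is weakly aligned: $\mu_i\in\lv_i\pm\delta$, and moreover $\mu_i\ne(\lv_i-\delta)^-$ whenever $U_i(\bla)$ is non-deterministic. The step is to verify that the list of zero-resistance bins above is contained in $\{\lv_i-\delta,\lv_i,\lv_i+\delta\}$ in every case. When $\nu_i\in\{\lv_i,\lv_i+\delta\}$, the only possible extra zero-resistance bin $\nu_i-\delta$ still lies in $\lv_i\pm\delta$; when $\nu_i=\lv_i-\delta$, the potential extra bin is $\lv_i-2\delta$, which is ruled out precisely because the weak-alignment caveat forbids $\mu_i=(\lv_i-\delta)^-$ in the non-deterministic case (and in the deterministic case $\nu_i$ is the sole zero-resistance bin). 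Hence every excluded bin $v\ne\lv_i\pm\delta$ has $r_{U_i(\bla)}(v)>0$, and as a finite minimum of strictly positive quantities $\tR(x)>0$.

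For (i), suppose $x$ fails weak alignment, so some agent $i$ is not weakly aligned; I exhibit an admissible zero-resistance witness for this agent. If $\mu_i\notin\lv_i\pm\delta$, then the mean bin $\nu_i=|\mu_i|$ itself satisfies $\nu_i\ne\lv_i\pm\delta$ while $r_{U_i(\bla)}(\nu_i)=0$. In the only remaining (boundary) case $U_i(\bla)$ is non-deterministic with $\mu_i=(\lv_i-\delta)^-$, so $\nu_i=\lv_i-\delta$ and the degenerate branch of \cref{corol:almost_regular_deviation} makes $\lv_i-2\delta$ a second zero-resistance bin; since $\lv_i-2\delta\ne\lv_i\pm\delta$ it is admissible. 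Either way $\tR(x)=0$. The only genuine subtlety — and the main obstacle — is exactly this boundary behaviour of a non-deterministic mean sitting on a bin edge: this is where the extra zero-resistance bin appears, and the crux of the whole lemma is to check that the weak and strong alignment notions in \cref{def:weak&strong_aligned} (in particular the non-obvious exclusion $\mu\ne(\lv-\delta)^-$) have been calibrated so that these zero-resistance bins fall inside $\lv_i\pm\delta$ for parts (ii) and (iii) but escape it for part (i).
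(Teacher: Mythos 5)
Your proof is correct and follows essentially the same route as the paper's: both rest on the characterisation of zero-resistance bins from \cref{corol:almost_regular_deviation} (the mean bin $\nu_i$, plus the extra bin $\nu_i-\delta$ when the mean of a non-deterministic utility sits on a bin edge), the bound of \cref{lemma:ritel:offset_observation} for part (iii), and the observation that the exclusion $\mu\ne(\lv-\delta)^-$ in \cref{def:weak&strong_aligned} is exactly what keeps all zero-resistance bins inside $\lv_i\pm\delta$ for weakly aligned agents. If anything, your case analysis in part (ii) spells out the boundary case more explicitly than the paper, which handles it only via the remark at the end of its part (i).
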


\begin{proof}
    \hfill\newline\noindent\textbf{(i).}
    If $x\notin\cC_\delta$, let $i$ be a non-aligned agent in $x$ with mean utility $\mu_i$, which bin $\nu_i$ satisfies $\nu_i\ne\lv_i\pm\delta$.
    According to \cref{corol:almost_regular_deviation}, $r_{U_i(\bla)}(\nu_i)=0$, hence $\tR(x)=0$.
    This covers all cases except for the one where $\mu_i=\nu_i^-=\lv_i^--\delta$ and $U_i(\bla)$ is not deterministic (see \cref{def:weak&strong_aligned}). In this case, we have $\nu_i-\delta=\lv_i-2\delta\ne\lv_i\pm\delta$ and \cref{corol:almost_regular_deviation} states that $r_{U_i(\bla)}(\nu_i-\delta)=0$ as $\mu_i=(\nu_i-\delta)^+$, hence $\tR(x)=0$.
     Note that this edge case was defined as not weakly aligned in \cref{def:weak&strong_aligned} precisely for this reason, so that the following property holds true for all weakly aligned states.

    \medskip\noindent\textbf{(ii).}
    If $x\in\cC_\delta$, then $\nu_i=\lv_i\pm\delta$ for all agents $i$. Then any bin $v\ne\lv_i\pm\delta$ satisfies $v\ne\nu_i$, hence $r_{U_i(\bla)}(v)>0$ according to \cref{corol:almost_regular_deviation}. Therefore $\tR(x)>0$.
    
    \medskip\noindent\textbf{(iii).}
    If $x\in\cC$, then $\nu_i=\lv_i$ for all agents $i$ and the fact that $\tR(x) \ge R_0$ follows directly from \cref{lemma:ritel:offset_observation}.
\end{proof}

The quantity $R_0$ may be seen as an indicator of how reliable the utilities sampled by \ac{ritel} are. Assuming $R_0$ is large enough, offset observations will play a negligible role in leaving strongly aligned states compared to other behaviors that were already studied in \ac{itel}.
A condition between $\delta$ and $\tau_0$ is introduced to lower bound the resistance of offset observations through $R_0$.
\begin{condition}[Noise control]
    \label{cond:R_0}
    \begin{align*}
        R_0
        = 2\tau_0\delta^2
        \ge 1.
    \end{align*}
\end{condition}

In general, the key condition needed is that $\tau_0\Lambda^*_U(\mu+\delta) > 1$ for all utility distributions $U$ with expectation $\mu$. This is ensured by \cref{cond:R_0} thanks to \cref{eq:bound_Lambda}.
As discussed in \cref{sec:ritel:almost_reg}, \cref{cond:R_0} may be weakened with better knowledge of the distributions involved. Again, this discussion is independent from our work.

In the \ac{ritel} process, we shall see that there exists a path leaving any state $x$ with resistance depending on $\tR(x)$ due to the randomness of observations.
\Cref{lemma:ritel:noise_stability,cond:R_0} are key to ensure that in the case of strongly aligned states, such paths are not easy edges, as they necessarily imply offset observations which have large resistance.
On the other hand, states that are not weakly aligned can be exited with no resistance.
Regarding weakly aligned states, they act as a gray area as their outward resistance is positive but may be arbitrarily close to $0$.
With the same reasoning as for \ac{itel}, this eventually implies that that the recurrence classes of \ac{ritel} are the weakly aligned states similarly to \cref{prop:itel:recurrence_P_0}.
\begin{proposition}
    \label{prop:ritel:recurrence_P_0}
    The recurrence classes of the unperturbed process $P^0$ are the singletons $\{x\}$ for each weakly aligned states $x\in\cC$, and possibly the communication class of the all-discontent state $D$.
\end{proposition}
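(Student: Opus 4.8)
The plan is to follow the proof of the deterministic analogue \cref{prop:itel:recurrence_P_0}, using bins in place of exact utilities, \cref{assum:3dinterdependence} in place of \cref{assum:interdependence}, and the noise-resistance bounds of \cref{lemma:ritel:noise_stability} to control the randomness. Concretely, I would show that \textbf{(a)} every weakly aligned state (\ie, every $x\in\cC_\delta$) is absorbing for $P^0$, hence a singleton recurrence class, and \textbf{(b)} every state that is not weakly aligned admits a zero-resistance path in $P^0$ either to a weakly aligned state or to $D$; together these two facts yield the claimed recurrence structure.

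For (a), fix a weakly aligned state $x\in\cC_\delta$. In $P^0$ content agents never explore, so all agents play the benchmark profile $\bla$. By \cref{lemma:ritel:noise_stability}(ii) the noise resistance satisfies $\tR(x)>0$, so every bin $v\ne\lv_i\pm\delta$ is observed with positive resistance $r_{U_i(\bla)}(v)>0$, hence with probability tending to $0$; consequently, in $P^0$ each agent $i$ observes only a bin in $\{\lv_i-\delta,\lv_i,\lv_i+\delta\}$. For each such observation the update policy of \cref{tab:ritel:update_policy} prescribes \emph{revert}, so $x$ is a fixed point of $P^0$. The boundary configuration $\mu_i=(\lv_i-\delta)^-$ excluded in \cref{def:weak&strong_aligned} is exactly the case that would otherwise let the empirical mean fall into the bin $\lv_i-2\delta$ with zero resistance and drive $i$ to \watchful, and it is precisely this exclusion that makes \cref{lemma:ritel:noise_stability}(ii) applicable. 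This is the genuinely new ingredient compared with \ac{itel}.

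For (b), the admissible zero-resistance moves in $P^0$ are: a discontent agent exploring any prescribed action and rejecting (the rejection probability $1-(\epsilon^{F(u)}\wedge c_F)$ stays bounded away from $0$); a content agent playing its benchmark and changing mood upon observing its mean bin $\nu_i$ when $\nu_i\ge\lv_i+2\delta$ or $\nu_i\le\lv_i-2\delta$; a hopeful agent accepting a confirmed improvement and re-aligning upward; and a watchful agent rejecting a confirmed deterioration into \discontent. If all agents are content but $x\notin\cC_\delta$, some agent is misaligned and hence leaves its mood: resolving the upward misalignments re-aligns those agents toward the strongly aligned state of $\bla$, while any downward misalignment produces a discontent agent. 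Once a discontent agent exists, I would invoke $3\delta$-interdependence: applied to the set $J$ of discontent agents it yields an agent $i\notin J$ and actions $a'_J$ shifting $M_i$ by at least $3\delta$, hence shifting $i$'s observed bin by at least three bins; since two bins that far apart cannot both lie in $\{\lv_i-\delta,\lv_i,\lv_i+\delta\}$, letting the discontent agents explore the appropriate profile forces $i$ through an intermediate mood, and an iterated ``infection'' drives successive agents to \discontent{} until $D$ is reached. This is exactly the deterministic argument of \cref{sec:itel:proof_P_0}, the only adaptation being the passage from a utility gap to the three-bin gap guaranteed by \cref{assum:3dinterdependence}.

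Finally I would assemble the recurrence classes. Weakly aligned states are absorbing, so distinct ones never communicate and none returns to $D$; by (b) every other state reaches, with zero resistance, a weakly aligned absorbing state or $D$, so it is either transient or lies in the communication class of $D$. Hence the recurrence classes are precisely the singletons $\{x\}$ for $x\in\cC_\delta$ together with the communication class of $D$, the latter being recurrent exactly when no zero-resistance edge leaves it, \ie, when $D$ cannot leak into an absorbing weakly aligned state, which accounts for the word ``possibly''. I expect the infection argument of (b) to be the main obstacle, as it is where interdependence must be leveraged to propagate discontent in the correct direction despite the one-bin tolerance of the agents; by contrast the noise handling in (a), while new, is fully delivered by \cref{lemma:ritel:noise_stability}.
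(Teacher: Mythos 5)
Your proposal is correct and follows essentially the same route as the paper: your step (b) is exactly the paper's \cref{lemma:ritel:path_to_aligned,lemma:ritel:path_to_D} (re-alignment of misaligned content agents followed by the $3\delta$-interdependence ``infection'' argument, with the same bin-gap computation), and your step (a) is the paper's closing observation that weakly aligned states are absorbing in $P^0$, including the same treatment of the boundary case excluded in \cref{def:weak&strong_aligned}. The only cosmetic difference is that you route the noise control through \cref{lemma:ritel:noise_stability}, where the paper argues directly from \cref{corol:almost_regular_deviation}.
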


\Cref{prop:ritel:recurrence_P_0} is proven in \cref{sec:ritel:proof_P_0}.
Let us now discuss the role of optimal states.
As in \ac{itel}, in an optimal state $x\in\cA_\delta$ no agent can explore. However, agents may make offset observations, so it is not true that all states in $\cA_\delta$ will be absorbing even in $P^\epsilon$, regardless of whether they are strongly aligned or not. In fact, we show that a state $x\in\cA_\delta$ is absorbing only if $\tR(x) = \pinfty$.
For this reason, we define $\cA \subset \cA_\delta$ the set of optimal states with infinite noise resistance $\tR$.
Although strongly aligned states are more likely to satisfy this property as suggested by \cref{lemma:ritel:noise_stability}, weakly aligned states may satisfy it too, for example when an utility is bounded \ac{as} in two consecutive bins, hence the agent cannot leave the weakly aligned bin with an offset observation.
\begin{proposition}
    \label{prop:ritel:recurrence_P_eps}
    Let $\epsilon>0$.
    \begin{itemize}
        \item If $\cA\ne\emptyset$, states $x\in\cA$ are absorbing states for the perturbed process $P^\epsilon$ and the recurrence classes of $P^\epsilon$ are exactly the corresponding singletons $\{x\}$.
        \item If $\cA=\emptyset$, $P^\epsilon$ is aperiodic and irreducible.
    \end{itemize}
\end{proposition}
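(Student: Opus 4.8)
The plan is to reproduce the two-case dichotomy of \cref{prop:itel:recurrence_P_eps}, replacing the deterministic exploration resistances by the noise resistance $\tR$ of \cref{def:ritel:noise_stability} and its bounds in \cref{lemma:ritel:noise_stability}. Throughout I would work only with positive-probability one-step transitions of $P^\epsilon$: such a transition occurs exactly when its resistance is finite, so every claim reduces to exhibiting, or ruling out, finite-resistance paths between states. The two ingredients are (a) identifying which states are absorbing, which is governed by whether offset observations can occur, and (b) showing that every non-absorbing state can be driven into the desired target set through the all-discontent state $D$.

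For the case $\cA\neq\emptyset$, I would first check that each $x\in\cA$ is absorbing. Every agent is optimized, so $\lv_i=\{1\}$ and the action policy of \cref{tab:ritel:action_policy} makes each agent play its benchmark; hence the profile $\bla$ is played with probability one. Since $\tR(x)=\pinfty$, \cref{corol:almost_regular_deviation} gives that, for each benchmark utility $U_i(\bla)$, the probability of observing any bin $v\neq\lv_i\pm\delta$ is $0$, and as $\{1\}$ is the top bin each agent observes a bin in $\{\lv_i-\delta,\lv_i\}$ almost surely. In both cases the update policy of \cref{tab:ritel:update_policy} prescribes a revert, so the state is unchanged and $\{x\}$ is a singleton absorbing recurrence class. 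It then remains to prove that no other recurrence class exists, which I would do by showing every $y\notin\cA$ is transient: first drive all agents to discontent to reach $D$—using confirmed deteriorations (a content agent turning watchful and then rejecting) and exploratory moves, each of finite resistance—and then from $D$ let all agents explore, jointly play the benchmark profile of a suitable $x\in\cA$, and accept while climbing their benchmark bins up to $\{1\}$.

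For the case $\cA=\emptyset$, I would establish that $P^\epsilon$ is irreducible and aperiodic. Aperiodicity is immediate from a self-loop at $D$: with positive probability all agents explore and all reject. For irreducibility it suffices that every state reaches $D$ and that $D$ reaches every state, which follow from the same exploration/rejection and exploration/acceptance paths as above. The essential point, and what distinguishes this case from the previous one, is that now \emph{no} state is absorbing: a content non-optimized agent explores with probability $\epsilon>0$, while for an optimal state the hypothesis $\cA=\emptyset$ forces $\tR(x)<\pinfty$, so by \cref{corol:almost_regular_deviation} an offset observation occurs with positive probability and triggers a mood change (become watchful or hopeful) that lets the state be left. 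Combined with reachability of and from $D$, this yields a single communicating class.

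The main obstacle is the path construction under randomness, and specifically the reachability of $\cA$ needed for transience in the first case. Unlike in \ac{itel}, each step of a path now depends on whether a prescribed bin is observed with positive or zero probability, so every link must be validated against \cref{corol:almost_regular_deviation}; in particular, raising a benchmark to $\{1\}$ requires the top bin to be observable, which is delicate precisely when a mean lies in $(1-\delta,1)$, and it interacts with the weak/strong alignment edge case of \cref{def:weak&strong_aligned}. The bounds of \cref{lemma:ritel:noise_stability} together with \cref{cond:R_0} and the offset resistance $R_0$ are exactly what make offset observations costly enough that strongly aligned states cannot be escaped at zero resistance, while keeping non-aligned states and finite-$\tR$ optimal states escapable; getting these resistance comparisons right is where the real work lies.
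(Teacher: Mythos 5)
Your proposal follows the paper's proof plan exactly in outline: the same two-case split, the same absorbing-state argument, and the same communication-through-$D$ strategy for transience and irreducibility. The absorbing part is correct and captures the precise subtlety the paper emphasizes in \cref{sec:ritel:proof_res}: infinite outward resistance alone would not rule out escape, but \cref{corol:almost_regular_deviation} upgrades $\tR(x)=\pinfty$ to $\PP(U^{(\tau)}\in v)=0$ for every offset bin, so states of $\cA$ genuinely cannot be left. The $\cA=\emptyset$ case also matches the paper (self-loop at $D$ for aperiodicity, finite-resistance escape from every state because non-optimized content agents can explore and optimal states with $\tR(x)<\pinfty$ admit offset observations of positive probability).

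The genuine gap is the step you flag at the end but do not close: transience of all states outside $\cA$ when $\cA\ne\emptyset$ requires a positive-probability path from $D$ \emph{into} $\cA$, and the path you sketch---all agents jointly play the benchmark profile $\bla$ of some $x\in\cA$ and ``accept while climbing their benchmark bins up to $\{1\}$''---can have probability zero. A state of $\cA$ is only required to be \emph{weakly} aligned (\cref{def:weak&strong_aligned}): every benchmark bin is $\{1\}$, but the means $\mu_i$ may lie anywhere in $[1-\delta,1)$. An accept can only install a bin that has actually been observed, and the bin $\{1\}$ is observed only when the empirical mean equals $1$ exactly, \ie, only if $U_i$ has an atom at $1$ under the profile played. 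Concretely, if $U_i(\bla)$ is uniform on $[1-\delta,1)$ for every $i$, the all-content state with benchmark actions $\bla$ and bins $\{1\}$ belongs to $\cA$ (weakly aligned, optimized, and $\tR=\pinfty$ since no bin below $[1-\delta,1)$ is ever observed), yet bin $\{1\}$ is never observed under $\bla$, so no ``climbing'' path exists at all. The paper routes this step through \cref{lemma:ritel:r_D}, whose constructive direct path from $D$ is established only for \emph{strongly} aligned states, where $\mu_i\in\lv_i=\{1\}$ forces $U_i(\bla)=1$ almost surely and the observation of $\{1\}$ is free; the merely weakly aligned members of $\cA$ are exactly the hard case, and your instinct that this is ``where the real work lies'' is right---but a proof cannot leave it as a flagged obstacle. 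Any completion must either show that some member of $\cA$ is reachable from $D$, or restrict the state space to states actually reachable by the dynamics. The same overreach appears, more mildly, in your $\cA=\emptyset$ case: ``$D$ reaches every state'' fails for states whose benchmark bins are never observable under any profile; the paper's irreducibility claim is to be read modulo its footnote restricting attention to the unique recurrence class containing $D$, and your argument needs to make that restriction explicit as well.
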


\Cref{prop:ritel:recurrence_P_eps} is proven in \cref{sec:ritel:proof_res}.
As for \ac{itel}, in the first case we immediately conclude that the $\cS = \cA$, whereas in the second case we resort to using \cref{thm:sss_min_gamma_AR}.

\subsubsection{Convergence of RITEL}
\label{sec:ritel:results:results}

We now state our main convergence result for \ac{ritel}. Note that $F$ and $G$ must satisfy the same condition \cref{cond:itel} as \ac{itel}.
\begin{theorem}
    \label{thm:ritel:X_star}
    Assume that \cref{assum:3dinterdependence}, \cref{cond:itel}, and \cref{cond:R_0} hold and that $\cA=\emptyset$. Then the set $\cX^\star$ of states with minimum potential satisfies:
    \begin{itemize}
        \item If $\cE\ne\emptyset$, $\cX^\star \subset \cA_\delta \cup \bLa x\in\cE_\delta\setminus\cA_\delta ~:~ \tW(x) \ge \max_\cE \tW\bRa$.
        \item If $\cE=\emptyset$, $\cX^\star \subset \cE_\delta \cup \bLa x\in\cC_\delta\setminus\cE_\delta ~:~ \tW(x)-\tS_-(x) \ge \max_\cC (\tW-\tS_+)\bRa$.
    \end{itemize}
\end{theorem}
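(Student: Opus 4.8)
The plan is to reproduce the potential computation carried out for \ac{itel} in \cref{thm:itel:X_star}, inserting at each step the corrections forced by bins, noise and the weak/strong dichotomy. Since $\cA=\emptyset$, \cref{prop:ritel:recurrence_P_eps} makes $P^\epsilon$ aperiodic and irreducible, so \cref{thm:sss_min_gamma_AR} applies and the task reduces to locating the states that minimise the potential $\gamma$. By \cref{prop:ritel:recurrence_P_0} the recurrent classes of $P^0$ are the weakly aligned states $\cC_\delta$ together with possibly the all-discontent class $D$, and since non-recurrent classes never minimise $\gamma$ it suffices to analyse the reduced resistance graph carried by $\{D\}\cup\cC_\delta$. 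As in the deterministic case, $D$ serves as a hub: discontent agents explore every action with zero resistance, so $D$ reaches each weakly aligned state, while \cref{assum:3dinterdependence} guarantees a destabilising cascade from any weakly aligned state back to $D$ (a unilateral deviation now shifts some other agent's mean by at least $3\delta$, which always crosses the $2\delta$ threshold needed to register a change), keeping the graph connected with finite resistances.

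Next I would record the edge resistances on this reduced graph. Entering a strongly aligned state $x$ from $D$ costs $\sum_i F(\lv_i)=1-\tW(x)$, each agent accepting its mean bin with zero noise resistance, and for merely weakly aligned states the same holds up to a one-bin offset. Leaving $x$ splits by equilibrium status. If $x$ is not a $\delta$-equilibrium, a single agent explores (resistance $1$) and accepts an improving bin with resistance $G(\lv_i,\cdot)$, so the outward resistance equals $1+\min G$, which the two-sided definitions of \cref{def:ritel:tW&tS_state} bracket as $r^\star(x)\in[\,2-\tS_+(x),\,2-\tS_-(x)\,]$; the noise term $r_U(v)$ can only make higher bins cheaper, and this is exactly what the gap between $\tS_+$ and $\tS_-$ absorbs. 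If $x$ is a $\delta$-equilibrium no single exploration can be accepted, so every exit requires either two simultaneous perturbations (resistance at least $2$) or an offset observation; by \cref{lemma:ritel:noise_stability,lemma:ritel:offset_observation} together with \cref{cond:R_0}, every offset from a strongly aligned state costs at least $R_0\ge1$, hence such noisy exits never undercut the unit cost of an exploration and the outward resistance of strongly aligned equilibria stays uniformly bounded below by $2$.

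With these resistances the identification of $\cX^\star$ is a tree-surgery exchange. Routing every non-root state to $D$ and $D$ to the root gives, for a constant $C=\sum_{y\in\cC_\delta}r^\star(y)$, the upper bound $\gamma(x)\le C+(1-\tW(x))-r^\star(x)$, and the argument of \cref{thm:itel:X_star} shows this hub routing is optimal, so minimising $\gamma$ amounts to maximising $\tW(x)+r^\star(x)$. The matching lower bound comes from the reverse surgery: from a minimal $x$-tree, adding a cheap outward edge $x\to y$ and deleting $y$'s outgoing edge produces a $y$-tree of resistance $\gamma(x)+r(x\to y)-r(y\to p(y))$, which is strictly smaller whenever $x$ has a cheap exit and $y$ does not. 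When $\cE\neq\emptyset$ this excludes every $x\in\cC_\delta\setminus\cE_\delta$, since its exploration exit $2-\tS_-(x)<2$ is cheaper than the exit of any $\delta$-equilibrium reached downstream, and among $\delta$-equilibria it excludes those with $\tW(x)<\max_\cE\tW$ by rerouting through $D$ at the cheaper entering cost $1-\tW$; crucially, the bound $F_0+G_0\le1$ of \cref{cond:itel} is what forces any equilibrium to beat every non-equilibrium, as it gives $\tW-\tS\le G_0\le 1-F_0\le\tW$, leaving exactly $\cA_\delta\cup\{x\in\cE_\delta\setminus\cA_\delta:\tW(x)\ge\max_\cE\tW\}$. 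When $\cE=\emptyset$ the exit term is the stability-dependent $r^\star(x)=2-\tS(x)$ throughout, so the same exchange maximises $\tW-\tS$; combining $r^\star(x)\le 2-\tS_-(x)$ for an excluded state with $r^\star(x^\star)\ge 2-\tS_+(x^\star)$ for the reference $x^\star\in\cC$, the surgery yields a strictly cheaper $x^\star$-tree precisely when $\tW(x)-\tS_-(x)<\tW(x^\star)-\tS_+(x^\star)=\max_\cC(\tW-\tS_+)$, which is the second inclusion.

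The main obstacle is the weak/strong gray area. A state in $\cC_\delta\setminus\cC$ sits within one bin of a boundary, so by the remark following \cref{lemma:ritel:noise_stability} its outward resistance can be pushed arbitrarily close to $0$; there the hub-routing potential cannot be evaluated exactly, only bracketed, which is why the whole argument must run through one-sided estimates---an upper bound on the potential of the extremal reference states in $\cC$ and $\cE$ set against a lower bound on the potential of the states to be excluded---rather than the exact equalities available for \ac{itel} in \cref{thm:itel:X_star}. The residual gap between these two bounds is exactly the content of the weak sets $\cA_\delta$, $\cE_\delta$ and $\cC_\delta$ in the statement, and is what turns the conclusion into an inclusion rather than an equality. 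A secondary but pervasive difficulty is the bookkeeping of noise: one must verify at every exit that an offset observation, whose resistance is only bounded below by $R_0$, never becomes an easy edge, which \cref{cond:R_0} secures by forcing $R_0\ge1$ to dominate the unit resistance of a single exploration.
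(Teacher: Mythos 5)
Your overall architecture is the paper's: reduce to the resistance graph on $\{D\}\cup\cC_\delta$ via \cref{prop:ritel:recurrence_P_0,prop:ritel:recurrence_P_eps}, bound $r(D\to x)$ and $r^\star(x)$ one-sidedly according to strong/weak membership, compare potentials through \cref{cond:itel}, and conclude with \cref{thm:sss_min_gamma_AR}; your resistance estimates and final inequality chains essentially reproduce \cref{lemma:ritel:r_x,lemma:ritel:r_D} and \cref{sec:ritel:proof_res}. The genuine gap is in the step that converts these resistances into \emph{lower} bounds on the potential of the states to be excluded. Your ``reverse surgery'' only gives $\gamma(y)\le\gamma(x)+r(x\to y)-r(y\to p(y))$ with $p(y)$ the unknown parent of $y$ in a minimal $x$-tree, so excluding $x$ requires some $y$ with $r(x\to y)<r(y\to p(y))$, and all you can guarantee is $r(y\to p(y))\ge r^\star(y)$. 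This fails for the comparisons you actually need: taking $y=x^\star\in\cE$, the edge cost $r(x\to x^\star)$ is a travel cost, in general bounded only by $r(x\to D)+r(D\to x^\star)$; for $x\in\cE_\delta\setminus\cA_\delta$ with $\tW(x)<\tW^\star$ this gives $r(x\to x^\star)\le 2+(1-\tW^\star)$ against $r(x^\star\to p(x^\star))\ge 2$, and the required strict inequality would need $\tW^\star>1$. The bound that does the work, $\gamma(x)\ge\gamma(D)-r^\star(x)+(1-\tW(x))$, is not a local exchange: it is the global path-decomposition argument of \cref{lemma:itel:potential} (carried over in \cref{lemma:ritel:potential}) --- any $x$-tree contains a path $D\leadsto x$, along which the outward resistances of every state other than $x$ accumulate, and in addition each agent must pay its acceptance cost $F(\lv_i)$ from a discontent state somewhere on that path, these two families of costs being disjoint. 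You invoke ``the argument of \cref{thm:itel:X_star}'' for optimality of the hub routing, but then substitute the surgery for it; the surgery does not prove it.

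A second, related error: the claim that a non-$\delta$-equilibrium's exploration exit $2-\tS_-(x)<2$ ``is cheaper than the exit of any $\delta$-equilibrium reached downstream'' is false, for exactly the gray-area reason you yourself describe at the end. A downstream state can lie in $\cE_\delta\setminus\cE$ or $\cA_\delta$ while being only weakly aligned, and then its outward resistance $2\tR(y)$ can be arbitrarily close to $0$, in particular below $2-\tS_-(x)$; this is precisely why the paper never compares a state to its downstream neighbours and instead routes every comparison through $\gamma(D)$. Smaller inaccuracies of the same kind: a pure-noise exit requires \emph{two} offset observations, so it costs $2\tR(x)$ rather than $\tR(x)$ (your per-offset bound $R_0\ge1$ yields the needed $\ge 2$ only because every exit decomposes into two such unit-cost events); the bracket $r^\star(x)\in[2-\tS_+(x),\,2-\tS_-(x)]$ holds jointly only on $\cC\setminus\cE_\delta$, since the lower bound needs strong alignment and the upper bound needs failure of $\delta$-equilibrium (though you later pair the bounds with the correct classes); and $D$ itself must also be shown to lie outside $\cX^\star$, which follows from $\gamma(D)>\gamma(D)-1-\tW^\star\ge\gamma(x^\star)$ but is never mentioned.
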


\Cref{thm:ritel:X_star} is proven in \cref{sec:ritel:proof}.
\Cref{prop:ritel:recurrence_P_0,prop:ritel:recurrence_P_eps} appear as key steps of the proof.
Recall that \cref{thm:sss_min_gamma_AR} then identifies the \ac{sss} to be included within $\cX^\star$. Then, \cref{lemma:ritel:correspondence_equilibrium,lemma:ritel:correspondence_tW&tS} allow to translate \cref{thm:ritel:X_star} to a statement in terms of action profiles.

\begin{theorem}[\ac{ritel} convergence]
\label{thm:ritel:convergence}
    Assume that \cref{assum:3dinterdependence}, \cref{cond:itel}, and \cref{cond:R_0} hold.
    \begin{itemize}
        \item If there exist equilibrium action profiles, the \ac{sss} in the \ac{ritel} process have action profiles $\ba$ that are either $\delta$-optimal, or $3\delta$-equilibria with $\tW_+(\ba) \ge \tW_-(\ba^\star)$ where $\ba^\star$ maximizes $\tW$ among equilibrium action profiles.
        \item Else, the \ac{sss} in the \ac{ritel} process have action profiles $\ba$ that either are $3\delta$-equilibria, or satisfy $\tW_+(\ba) - \tS_-(\ba) \ge \tW_-(\ba^\star) - \tS_+(\ba^\star)$ where $\ba^\star$ maximizes $\tW - \tS$ among all action profiles.
    \end{itemize}
\end{theorem}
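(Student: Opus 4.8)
The plan is to read \cref{thm:ritel:convergence} as a translation of the state-level characterization \cref{thm:ritel:X_star} into the language of action profiles. The ingredients are already in place: \cref{prop:ritel:recurrence_P_eps} handles the dichotomy between absorbing states and an aperiodic irreducible chain, \cref{thm:sss_min_gamma_AR} passes from $\cX^\star$ to the \acp{sss}, and the two correspondence lemmas \cref{lemma:ritel:correspondence_equilibrium,lemma:ritel:correspondence_tW&tS} convert every statement about a state $x$ and its benchmark bins into a statement about its benchmark action profile $\bla$. I would also use that $\cC$ is in bijection with action profiles, each $\ba$ admitting a unique strongly aligned state.

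First I would dispose of the absorbing case. If $\cA\ne\emptyset$, \cref{prop:ritel:recurrence_P_eps} gives $\cS=\cA\subset\cA_\delta$, so by \cref{lemma:ritel:correspondence_equilibrium}(ii) every \ac{sss} has a $\delta$-optimal benchmark action profile. A $\delta$-optimal profile has $\mu_i\ge1-\delta$ for every agent, whence $M_i(a'_i,a_{-i})\le1\le\mu_i+\delta$, so it is in particular a $3\delta$-equilibrium. This settles the first alternative of each bullet at once. From now on I assume $\cA=\emptyset$, so $P^\epsilon$ is aperiodic and irreducible and \cref{thm:sss_min_gamma_AR} yields $\cS\subset\cX^\star$; it then suffices to translate the description of $\cX^\star$ given by \cref{thm:ritel:X_star}.

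Next I would split on the existence of equilibrium action profiles, keeping in mind that this split does \emph{not} coincide with the split $\cE=\emptyset$ versus $\cE\ne\emptyset$ that governs \cref{thm:ritel:X_star}. If an equilibrium action profile exists, then \cref{lemma:ritel:correspondence_equilibrium}(i) forces $\cE\ne\emptyset$, placing us in the first bullet of \cref{thm:ritel:X_star}: for $x\in\cX^\star$ the alternative $x\in\cA_\delta$ gives a $\delta$-optimal profile, while $x\in\cE_\delta\setminus\cA_\delta$ with $\tW(x)\ge\max_\cE\tW$ gives a $3\delta$-equilibrium by \cref{lemma:ritel:correspondence_equilibrium}(iii). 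The welfare bound $\tW_+(\bla)\ge\tW_-(\ba^\star)$ follows from the chain $\tW_+(\bla)\ge\tW(x)\ge\max_\cE\tW\ge\tW(x^\star)\ge\tW_-(\ba^\star)$, where $\ba^\star$ is the equilibrium profile of maximal $\tW$, its strongly aligned state $x^\star$ lies in $\cE$ by \cref{lemma:ritel:correspondence_equilibrium}(i), and the two outer inequalities come from \cref{lemma:ritel:correspondence_tW&tS}(i) and (ii). If no equilibrium action profile exists, I would further distinguish whether $\cE$ is empty. When $\cE\ne\emptyset$ we are still in the first bullet of \cref{thm:ritel:X_star}, but now \emph{both} alternatives yield a $3\delta$-equilibrium (the $\cA_\delta$ branch via $\delta$-optimality $\Rightarrow$ $3\delta$-equilibrium, the other via \cref{lemma:ritel:correspondence_equilibrium}(iii)), which is exactly the first clause of the second bullet. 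When $\cE=\emptyset$ we use the second bullet of \cref{thm:ritel:X_star}, sending $x\in\cE_\delta$ to a $3\delta$-equilibrium and the remaining $x\in\cC_\delta\setminus\cE_\delta$ to the trade-off bound $\tW_+(\bla)-\tS_-(\bla)\ge\tW_-(\ba^\star)-\tS_+(\ba^\star)$, obtained by the analogous chain through $\max_\cC(\tW-\tS_+)$ using \cref{lemma:ritel:correspondence_tW&tS}(i),(iii) at $x$ and (ii),(iv) at $x^\star$.

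The only genuine obstacle is the bookkeeping around these mismatched case splits: one must verify that the $\cA_\delta$ branch, which naturally produces $\delta$-optimality, collapses into the $3\delta$-equilibrium clause precisely when no equilibrium action profile exists, and that the maximizer $\ba^\star$ is consistently defined as the action profile whose unique strongly aligned state attains the relevant maximum of $\tW$ over $\cE$ (resp. $\tW-\tS_+$ over $\cC$), so that the four inequalities of \cref{lemma:ritel:correspondence_tW&tS} can be applied at $x^\star$ in the correct direction. Everything else is a direct substitution of those inequalities into the bounds of \cref{thm:ritel:X_star}.
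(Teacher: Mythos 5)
Your proposal is correct and follows essentially the same route as the paper's own proof: dispatch the absorbing case $\cA\ne\emptyset$ via \cref{prop:ritel:recurrence_P_eps} and \cref{lemma:ritel:correspondence_equilibrium}, then for $\cA=\emptyset$ combine \cref{thm:sss_min_gamma_AR} with \cref{thm:ritel:X_star} and translate through \cref{lemma:ritel:correspondence_equilibrium,lemma:ritel:correspondence_tW&tS} using exactly the inequality chains the paper uses. The only (cosmetic) difference is that you split on the theorem's condition (existence of equilibrium action profiles) first and on $\cE=\emptyset$ second, whereas the paper argues the three cases $\cA\ne\emptyset$, $\cE\ne\emptyset$, $\cE=\emptyset$ and reconciles the mismatched splits at the end by noting the resulting descriptions are nested.
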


The details needed to obtain \cref{thm:ritel:convergence} from the previous results are given in \cref{sec:ritel:convergence_in_action_profiles}.
Notice that since the game is finite, the set of all mean utilities involved is finite itself. 
In particular, assuming that $\delta$ is chosen small enough, all occurrences of $\delta$ may be replaced with $0$ in \cref{thm:ritel:convergence}:
\begin{itemize}
    \item If there exist equilibrium action profiles, the \ac{sss} in the \ac{ritel} process have equilibrium action profiles maximizing $\tW$ among equilibrium action profiles.
    \item Else, the \ac{sss} in the \ac{ritel} process have action profiles maximizing $\tW - \tS$ among all action profiles.
\end{itemize}
In other words, \ac{ritel} is capable of identifying optimal action profiles provided that $\delta$ is small enough. Such choice of $\delta$ would however imply a choice of $\tau_0$ that is unlikely to be reasonable in a practical setting in order to satisfy \cref{cond:R_0}.
In the case where $F$ and $G$ are chosen of the form $F:u\mapsto \phi_F - \psi_F\cdot u$ and $G:(\lu,u)\mapsto \phi_G - \psi_G\cdot(u-\lu)$, $\tW$ is a linear function of $W$ and $\tS$ is a linear function of $S$, which allows to translate the comparisons involving $\tW_\pm$ and $\tS_\pm$ to comparisons involving $W$ and $S$. \Cref{thm:ritel:convergence} then results in the following.
\begin{itemize}
    \item If there exist equilibrium action profiles, the \ac{sss} in the \ac{ritel} process have action profiles $\ba$ that are either $2\delta$-optimal, or $3\delta$-equilibria with $W(\ba) \ge \max W - 2\delta$ where the maximum is taken over equilibrium action profiles.
    \item Else, the \ac{sss} in the \ac{ritel} process have action profiles $\ba$ that either are $3\delta$-equilibria, or satisfy $\psi_FW(\ba)-\psi_GS(\ba) \ge \max \bLp\psi_FW-\psi_GS\bRp - 4\delta$ where the maximum is taken over all action profiles.
\end{itemize}

Indeed, in this case maximizing $\tW$ is equivalent to maximizing $W$ and maximizing $\tW-\tS$ is equivalent to maximizing $\psi_F\tW-\psi_G\tS$. Moreover, \cref{cond:itel} ensures that $n\psi_F+\psi_G \le 1$.
Recall that $\tW_\pm$ and $\tS_\pm$ are defined in \cref{def:ritel:tW&tS_action}.
The above statement is deduced from showing that
\begin{itemize}
    \item $\tW_+(\ba) \ge \tW_-(\ba')$ implies $W(\ba) \ge W(\ba')-2\delta$,
    \item $\tW_+(\ba)-\tS_-(\ba) \ge \tW_-(\ba')-\tS_+(\ba')$ implies $\psi_FW(\ba)-\psi_GS(\ba) \ge \psi_FW(\ba')-\psi_GS(\ba') - 4\delta$.
\end{itemize}
Indeed, denote $\bmu$ the mean utilities of $\ba$. Then
\[\tW_+(\ba)
= 1 - \sum_iF(\mu_i+\delta)
= 1 - n\phi_F + \psi_F\sum_i\mu_i + n\psi_F\delta
= 1 - n\phi_F + \psi_F (W(\ba)+n\delta)~.\]
Similarly, $\tW_-(\ba') = 1 - n\phi_F + \psi_F (W(\ba')-n\delta)$, which concludes the first implication as $2n\psi_F\delta \le 2\delta$. Now, Denote $\mu$ and $\mu'$ the mean utilities such that $S(\ba)=\mu'-\mu$. Then
\[\tS_-(\ba)
= 1 - G\bLp(\mu'-\delta)-(\mu+\delta)\bRp
= 1 - G(S(\ba)-\delta)
= 1 - \phi_G +\psi_G(S(\ba)-2\delta)~.\]
Similarly, $\tS_+(\ba') = 1 - \phi_G + \psi_G(S(\ba')+2\delta)$, which concludes the second implication as $2n\psi_F\delta + 4\psi_G\delta \le 4\delta$.

\section{Dynamic Perturbation}
\label{sec:gsa}

Recall that our analysis of \ac{itel} and \ac{ritel} discussed the nature of the \acp{sss}, that is the minimal subset of states $\cS$ such that
\begin{align*}
    \lim_{\epsilon\to0}~\liminf_{k\to\pinfty}~ \PP\bLp X_k^\epsilon\in\cS\bRp = 1~.
\end{align*}
In practice this means that for a fixed $\epsilon>0$ sufficiently small, the process $X^\epsilon$ eventually spends most of the time in states $x\in\cS$.
It is of interest to decrease the perturbation factor $\epsilon$ as the algorithm is running. A stronger result would be
\begin{align}
    \label{eq:single_limit}
    \lim_{k\to\pinfty}~ \PP\bLp X_k\in\cS\bRp = 1~,
\end{align}
where $(X_k)_{k\ge0}$ follows an inhomogeneous Markov chain $(P^{\epsilon_k})_{k\ge0}$ for some well chosen sequence $(\epsilon_k)_{k\ge0}$.
In fact, this kind of process can be interpreted as a form of \ac{gsa}.
In this section we apply \ac{gsa} results from \cite{trouvé1996} to improve \cref{thm:sss_min_gamma}.
We show that \cref{eq:single_limit} holds assuming $(\epsilon_k)_{k\ge0}$ is decreasing slowly enough. Moreover, given a finite horizon $H$, one can compute a ``cooling'' schedule $(\epsilon_k)_{0\le k\le H}$ with optimal convergence speed.
\ac{gsa} as defined in \cite[Definitions 1.1-1.3]{trouvé1996} can be described as follows.

\begin{definition}[Generalized Simulated Annealing]
    \label{def:gsa}
    Let $q$ be an aperiodic and irreducible Markov kernel over the state space $\cX$, $\kappa\in[1,\pinfty)$, and $V:\cX\times\cX\to[0,\pinfty]$ as cost function. A family of homogeneous Markov chains $(Q^T)_{0\le T\le T_0}$ over $\cX$ is \emph{admissible} for $(q,\kappa,V)$ if it satisfies the following properties for all $x,y\in\cX$:
    \begin{enumerate}[(i)]
        \item $Q_{x,y}^T \xrightarrow[T\to0]{} Q_{x,y}^0$,
        \item $V(x,y)<\pinfty$ if and only if $q_{x,y}>0$,
        \item for all $T\in[0,T_0]$, $\frac1\kappa q_{x,y}e^{-V(x,y)/T} \le Q^T_{x,y} \le \kappa q_{x,y}e^{-V(x,y)/T}$.
    \end{enumerate}
    When $(Q^T)$ is admissible, a \ac{gsap} is an inhomogeneous Markov process $(Q^{T_k})_{k\ge0}$ for some non-negative cooling schedule $(T_k)_{k\ge0}$.
\end{definition}

In \cite{trouvé1996} the first property is replaced with $(Q^T)_T$ being a continuous family, which is stronger. However only the continuity at $T=0$ is ever needed. Regardless, in the case of \ac{itel} transition probabilities are continuous functions of $\epsilon$.

In a \ac{gsap}, the transition from state $x$ to state $y$ behaves similarly to $e^{-V(x,y)/T}$ where $V\ge0$ is a cost function and $T>0$ is the temperature of the system. The parallel with \acp{pmp} is done by setting $\epsilon = e^{-1/T}$, $P^\epsilon=Q^T$ and $r(x\to y) = V(x,y)$, in which case the previous quantity is exactly $\epsilon^{r(x\to y)}$. Low temperature $T$ is then equivalent to small perturbation $\epsilon$.
Using this correspondence, we see that a \ac{rpmp} as in \cref{def:pmp} is also a \ac{gsa} as in \cref{def:gsa}. In fact, both definitions are equivalent except for the fact that the regularity condition is slightly weaker in \cref{def:gsa} than in \cref{def:regularity}.
It is however stronger then almost regularity as defined in \cref{def:almost_regularity}, hence we may not apply the theory of \ac{gsa} to \ac{ritel}.
It is possible however that some adaptation could be done to accommodate both frameworks.

The next section presents results from \cite{trouvé1996} translated into the \ac{rpmp} vocabulary. These results are stronger than \cref{thm:sss_min_gamma}, so that applying them in the context of \ac{itel} yields a more powerful statement than \cref{thm:itel:convergence}.

\subsection{GSA Theoretical Results}
\label{sec:eps_k:results}

In \cite{trouvé1996}, our notion of resistance appears via the cost $V$.
The notion of $x$-trees also appears as ``$\{x\}$-graphs'' \cite[Definition 1.4]{trouvé1996}, and potential appears as ``virtual energy'' \cite[Definition 1.5]{trouvé1996}.
The results from \cite{trouvé1996} are proven using a cycle decomposition. This decomposition is done by aggregating states from $\cX$ into ``cycles'' and deriving a new cost function over the graph of cycles, then iterating the process until the graph is reduced to a single state.
A key idea is that two states are merged into a cycle if there is a path going from one to the other and back using only easy edges.

The results are presented using quantities that depend on the cycle decomposition.
In the following, consider a \ac{rpmp} $(P^\epsilon)$ such that $P^\epsilon$ is aperiodic and irreducible for all $\epsilon>0$. Denote $\cX$ its state space and $\gamma$ its potential.
For $\lambda>0$, define $\cX_\lambda = \bLa x\in\cX: \gamma(x)\ge\gamma^\star+\lambda\bRa$ the set of states that are at least $\lambda$ sub-optimal, where $\gamma^\star=\min_{x\in\cX}\gamma(x)$.
For all cycle $\Pi$, denote $\gamma(\Pi) = \min_{x\in\Pi}\gamma(x)$.
The results below discuss the probability of the process to remain within $\cX_\lambda$.
Note that since there is a finite amount of state, when $\lambda$ is small enough $\cX_\lambda = \cX\setminus\cX^\star$.
Similarly, all quantities depending on $\lambda$ that are introduced in the following are constant at the neighborhood of $\lambda=0$, so that we eventually control $\PP(X_k\in\cX^\star)$ by applying the result to small enough $\lambda$.

\begin{theorem}[\cite{trouvé1996}, Theorem 5.2]
    \label{thm:trouvé_convergence}
    Let $(\epsilon_k)_{k\ge0}$ be a non-increasing cooling schedule.
    Let $(X_k)_{k\ge0}$ be an inhomogeneous Markov process following $(P^{\epsilon_k})_{k\ge0}$ with any starting distribution.
    Then for all $\lambda>0$,
    \begin{align*}
        \lim_{k\to0}~ \PP(X_k\in\cX_\lambda) = 0
        \quad \text{if and only if} \quad
        \sum_{k\ge0} \epsilon_k^{\Gamma_\lambda} = \pinfty,
    \end{align*}
    where
    \begin{align*}
        \Gamma_\lambda \eqdef \sup\bLa H_e(\Pi) \quad|\quad \text{$\Pi$ cycle : $\gamma(\Pi)\ge\gamma^\star+\lambda$}\bRa
    \end{align*}
    and $H_e(\Pi)$ is called the \emph{exit height} of the cycle $\Pi$ and depends on the cycle decomposition.
\end{theorem}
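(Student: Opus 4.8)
The statement is quoted from \cite{trouvé1996}, so the plan is to reconstruct its proof through the cycle decomposition already alluded to above, transported into the resistance vocabulary by the dictionary $\epsilon = e^{-1/T}$ and $r(x\to y)=V(x,y)$. The whole argument reduces to a single probabilistic mechanism: the per-step chance of climbing out of a sub-optimal cycle scales like $\epsilon^{H_e(\Pi)}$, so whether the process eventually escapes every sub-optimal region is governed by a Borel--Cantelli dichotomy on the series $\sum_k \epsilon_k^{\Gamma_\lambda}$.

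First I would build the hierarchical decomposition of $\cX$ into cycles from the resistance graph $\cG$: start from singletons, repeatedly merge states joined by a back-and-forth path of easy edges, renormalise the cost over the aggregated graph, and iterate until one cycle remains. To each cycle $\Pi$ I attach its bottom $\gamma(\Pi)$ and its exit height $H_e(\Pi)$, and then establish the fundamental two-sided estimate: there are constants $0<c\le C<\pinfty$, depending only on the data $(q,\kappa)$ of \cref{def:gsa}, such that for the homogeneous chain $P^\epsilon$ started at any interior point of $\Pi$, the probability of leaving $\Pi$ within a window of order $\epsilon^{-H_e(\Pi)}$ steps lies between $c$ and $C$. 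This uniform-in-starting-point exit estimate is the engine of the proof; the rest is bookkeeping.

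For the implication $\sum_k \epsilon_k^{\Gamma_\lambda}=\pinfty \Rightarrow \PP(X_k\in\cX_\lambda)\to0$, I would slice the time axis into consecutive blocks on which $\epsilon_k$ is essentially constant and apply the lower estimate to the deepest sub-optimal cycle, the one realising $\Gamma_\lambda$. Monotonicity of $(\epsilon_k)$ makes the conditional probability of exiting $\cX_\lambda$ on such a block bounded below by a constant multiple of that block's contribution to $\sum_k \epsilon_k^{\Gamma_\lambda}$; since these contributions sum to $\pinfty$, a conditional (second) Borel--Cantelli lemma forces infinitely many escapes, and the hierarchy's descent property together with the vanishing return probability as $\epsilon_k\downarrow0$ drives the occupation of $\cX_\lambda$ to zero. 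Conversely, if $\sum_k \epsilon_k^{\Gamma_\lambda}<\pinfty$, I fix a sub-optimal cycle $\Pi$ whose exit height lies within $\eta$ of $\Gamma_\lambda$; the upper estimate makes the per-step escape probabilities from $\Pi$ summable, so the first Borel--Cantelli lemma gives a positive probability that a trajectory entering $\Pi$ after some finite time never leaves it, trapping a non-negligible mass in $\cX_\lambda$ and preventing $\PP(X_k\in\cX_\lambda)\to0$.

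The hard part will be the two-sided exit estimate and the clean definition of $H_e(\Pi)$ through the renormalised cost: controlling exit times uniformly over the interior of a cycle requires the full Catoni-style renormalisation at each level of the hierarchy, and matching the lower and upper bounds forces one to track how the easy edges inside a cycle recombine into the cheapest escape path. With the fixed-$\lambda$ dichotomy in hand, one recovers control of $\PP(X_k\in\cX^\star)$ by taking $\lambda$ small: since $\cX$ is finite, $\Gamma_\lambda$ and $\cX_\lambda$ are eventually constant and $\cX_\lambda=\cX\setminus\cX^\star$, as already noted above.
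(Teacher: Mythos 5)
First, note what the paper actually does with this statement: it does not prove it. \cref{thm:trouvé_convergence} is imported verbatim from \cite{trouvé1996} (Theorem 5.2), and the paper's only work here is the dictionary $\epsilon = e^{-1/T}$, $P^\epsilon = Q^T$, $r(x\to y)=V(x,y)$, together with the remark following \cref{def:gsa} that a \ac{rpmp} (with transitions continuous in $\epsilon$) is an admissible family for some $(q,\kappa,V)$, so that Trouvé's theorem applies as stated. Your proposal instead undertakes to re-prove Trouvé's theorem itself; that is a legitimate but far harder task, and as written it contains a genuine gap.

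The gap is that your entire argument rests on what you yourself call the ``engine'': the two-sided, uniform-in-starting-point estimate that the probability of leaving a cycle $\Pi$ within a window of order $\epsilon^{-H_e(\Pi)}$ steps is bounded between positive constants. This estimate is postulated, not proved, and it is precisely the content of the Catoni--Trouvé renormalisation machinery (as is the definition of $H_e$ itself, which you also leave implicit); deferring it defers essentially the whole proof. Moreover, even granting that estimate, the sufficiency direction is incomplete: from ``the process escapes sub-optimal cycles infinitely often'' one cannot conclude $\PP(X_k\in\cX_\lambda)\to0$, since infinitely many escapes are compatible with $\limsup_k \PP(X_k\in\cX_\lambda)>0$ when re-entries are frequent. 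What is required is a quantitative occupation-measure bound balancing exit rates against return rates across the whole cycle hierarchy---not only the cycle realising $\Gamma_\lambda$---which is exactly what the hierarchical estimates of \cite{trouvé1996} supply and what your phrase ``descent property together with the vanishing return probability'' glosses over. Your necessity direction (a summable series traps the chain in a deep cycle with positive probability, hence $\liminf_k\PP(X_k\in\cX_\lambda)>0$) is sound in outline. Had you aimed to match the paper, it would have sufficed to verify conditions (i)--(iii) of \cref{def:gsa} for a \ac{rpmp}---i.e., that $\epsilon^{-r(x\to y)}P^\epsilon_{x,y}$ is bounded above and below by positive constants on compact ranges of $\epsilon$, which follows from regularity and continuity---and then invoke Trouvé's theorem; that short verification is the only thing the paper actually relies on.
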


We do not include here the general definition of $H_e$, which can be interpreted as the minimal cost to leave a cycle. In the case of singletons, which are the starting cycles of the cycle construction, $H_e(\{x\})=r^\star(x)$.
Applying \cref{thm:trouvé_convergence} with small enough $\lambda$ yields the following result.
\begin{corollary}
    \label{thm:trouvé_convergence_0}
    Let $(\epsilon_k)_{k\ge0}$ be a non-increasing cooling schedule.
    Let $(X_k)_{k\ge0}$ be an inhomogeneous Markov process following $(P^{\epsilon_k})_{k\ge0}$ with any starting distribution. Then,
    \begin{align*}
        \lim_{k\to\pinfty}\PP(X_k\notin\cX^\star) = 0
        \quad \text{if and only if} \quad
        \sum_{k\ge0} \epsilon_k^{\Gamma_0} = \pinfty,
    \end{align*}
    where
    \begin{align*}
        \Gamma_0 \eqdef \sup\bLa H_e(\Pi) \quad|\quad \text{$\Pi$ cycle : $\gamma(\Pi)>\gamma^\star$}\bRa.
    \end{align*}
\end{corollary}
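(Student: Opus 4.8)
The plan is to derive the corollary directly from \cref{thm:trouvé_convergence} by taking $\lambda>0$ small enough that the set $\cX_\lambda$ and the exponent $\Gamma_\lambda$ both stabilize to their limiting values. Two elementary finiteness observations are needed, after which the statement follows almost verbatim; all the substantive work is carried by \cref{thm:trouvé_convergence} itself.

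First I would handle the state set. Since $\cX$ is finite, $\gamma$ takes finitely many values, so there is a smallest positive gap $\lambda_0 \eqdef \min\{\gamma(x)-\gamma^\star : x\notin\cX^\star\} > 0$. For any $0<\lambda\le\lambda_0$ and any $x\in\cX$: if $x\in\cX^\star$ then $\gamma(x)=\gamma^\star<\gamma^\star+\lambda$, so $x\notin\cX_\lambda$; if $x\notin\cX^\star$ then $\gamma(x)\ge\gamma^\star+\lambda_0\ge\gamma^\star+\lambda$, so $x\in\cX_\lambda$. Hence $\cX_\lambda=\cX\setminus\cX^\star$, and consequently $\PP(X_k\in\cX_\lambda)=\PP(X_k\notin\cX^\star)$ for every $k$, so the two limit conditions coincide.

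Next I would show the exponent stabilizes. The cycle decomposition underlying \cref{thm:trouvé_convergence} produces only finitely many cycles $\Pi$, so $\gamma(\Pi)=\min_{x\in\Pi}\gamma(x)$ ranges over a finite set. Let $\mu_0 \eqdef \min\{\gamma(\Pi)-\gamma^\star : \Pi \text{ cycle},~\gamma(\Pi)>\gamma^\star\} > 0$. For $0<\lambda\le\mu_0$ the constraint $\gamma(\Pi)\ge\gamma^\star+\lambda$ is equivalent to $\gamma(\Pi)>\gamma^\star$: the forward implication is immediate since $\lambda>0$, and conversely $\gamma(\Pi)>\gamma^\star$ forces $\gamma(\Pi)\ge\gamma^\star+\mu_0\ge\gamma^\star+\lambda$. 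The two suprema defining $\Gamma_\lambda$ and $\Gamma_0$ therefore range over the same collection of cycles, giving $\Gamma_\lambda=\Gamma_0$.

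Finally, choosing any $\lambda$ with $0<\lambda\le\min(\lambda_0,\mu_0)$ and invoking \cref{thm:trouvé_convergence} yields $\lim_{k\to\pinfty}\PP(X_k\notin\cX^\star)=\lim_{k\to\pinfty}\PP(X_k\in\cX_\lambda)=0$ if and only if $\sum_{k\ge0}\epsilon_k^{\Gamma_0}=\sum_{k\ge0}\epsilon_k^{\Gamma_\lambda}=\pinfty$, which is exactly the claim. The only point requiring care---and the closest thing to an obstacle---is confirming that the cycle decomposition yields finitely many cycles so that $\mu_0$ is well defined and strictly positive; this is guaranteed because the decomposition is a finite iterative aggregation over the finite state space $\cX$, terminating when the graph is reduced to a single state.
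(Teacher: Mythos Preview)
Your proposal is correct and follows essentially the same approach as the paper, which simply remarks that since $\cX$ is finite, both $\cX_\lambda$ and $\Gamma_\lambda$ are constant for $\lambda$ in a neighborhood of $0$, and then applies \cref{thm:trouvé_convergence} with such a small $\lambda$. You have made explicit the two finiteness arguments that the paper only sketches.
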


\Cref{thm:trouvé_convergence_0} states that the process with dynamic perturbation converges to $\cX^\star$ if and only if the sequence $(\epsilon_k^{\Gamma_0})_k$ has infinite sum. In particular, the sequence $(\epsilon_k)_k$ must not decrease faster than a polynomial to ensure convergence. 
It is also shown that there exists a cooling schedule with optimal convergence speed.
\begin{theorem}[\cite{trouvé1996}, Theorem 6.3]
    \label{thm:trouvé_convergence_rate}
     Let $\lambda>0$. There exists a constant $c>0$ such that for all finite horizon $N$, there exists a non-increasing cooling schedule $(\epsilon_k(N))_{0\le k\le N}$ such that if $(X_k)_{k\ge0}$ is an inhomogeneous Markov process following $(P^{\epsilon_k(N)})_{0\le k\le N}$ with any starting distribution,
     \begin{align*}
         \PP(X_N\in\cX_\lambda) \le \frac{c}{N^{\hat{\alpha}_\lambda}},
     \end{align*}
     where
     \begin{align*}
        \hat{\alpha}_\lambda \eqdef \min\La\frac{\max(\gamma(\Pi)-\gamma^\star, \lambda)}{H_e(\Pi)} \quad|\quad \text{$\Pi$ cycle : $\gamma(\Pi)>\gamma^\star$}\Ra.
    \end{align*}
\end{theorem}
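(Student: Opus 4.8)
The plan is to prove the statement not from scratch but by importing Trouvé's Theorem 6.3 from the \ac{gsa} framework through the correspondence $\epsilon = e^{-1/T}$ set up just before \cref{sec:eps_k:results}. First I would fix the dictionary between the two settings: writing $T = 1/\log(1/\epsilon)$ so that $\epsilon^{r(x\to y)} = e^{-r(x\to y)/T}$, I identify the cost $V(x,y)$ with the resistance $r(x\to y)$, the virtual energy with the potential $\gamma$, and the $\{x\}$-graphs with $x$-trees. Since the cycle decomposition, the exit heights $H_e(\Pi)$, and the exponent $\hat\alpha_\lambda$ are all combinatorial invariants of the weighted resistance graph $\cG$, they transfer verbatim, and in particular the $\hat\alpha_\lambda$ written in the statement coincides with Trouvé's exponent once $V=r$.

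The crux is to verify that a \ac{rpmp} $(P^\epsilon)$ with $P^\epsilon$ aperiodic and irreducible for all $\epsilon>0$, reparametrized as $Q^T \eqdef P^{e^{-1/T}}$, is admissible for a suitable triple $(q,\kappa,V)$ in the sense of \cref{def:gsa}. I would take $V(x,y) = r(x\to y)$ and define $q$ by $q_{x,y} \eqdef \lim_{\epsilon\to0} \epsilon^{-r(x\to y)} P^\epsilon_{x,y}$ when $r(x\to y)<\pinfty$, and $q_{x,y}=0$ otherwise. By \cref{def:regularity} this limit exists in $(0,\pinfty)$, so condition (ii) ($V<\pinfty$ iff $q>0$) holds; moreover the support of $q$ coincides with that of $P^\epsilon$ for small $\epsilon$, so $q$ inherits irreducibility and aperiodicity. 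Condition (i), continuity at $T=0$, is precisely the defining convergence $P^\epsilon_{x,y}\to P^0_{x,y}$ of a \ac{pmp}, and for \ac{itel} the full continuity in $\epsilon$ (hence in $T$) is immediate since transition probabilities are continuous functions of $\epsilon$.

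The main obstacle is condition (iii), the two-sided bound $\frac1\kappa q_{x,y} e^{-V(x,y)/T} \le Q^T_{x,y} \le \kappa q_{x,y} e^{-V(x,y)/T}$, which must hold uniformly for all $T\in[0,T_0]$ rather than only asymptotically. I would obtain it by showing that the ratio $\phi_{x,y}(\epsilon) \eqdef \epsilon^{-r(x\to y)} P^\epsilon_{x,y}$ is bounded above and below by positive constants on the whole interval $(0,\epsilon_0)$: it is continuous and strictly positive there, and by regularity it extends continuously to $\epsilon=0$ with value $q_{x,y}>0$, so it is bounded on a neighborhood of $0$, while on the complementary compact subinterval it is a continuous strictly positive function, hence bounded away from $0$ and $\pinfty$. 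Taking $\kappa$ to be the maximum over the finitely many pairs $(x,y)$ with $r(x\to y)<\pinfty$ of the supremum and of the reciprocal of the infimum of $\phi_{x,y}$ yields a single $\kappa\ge1$ valid for all transitions; pairs with $r=\pinfty$ have $q_{x,y}=0=P^\epsilon_{x,y}$ and need no bound. The delicate point is near $\epsilon_0$, where I would use that $P^\epsilon_{x,y}>0$ for every $\epsilon>0$ (finiteness of the resistance together with irreducibility) to keep the infimum strictly positive.

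With admissibility in hand, the remaining step is cosmetic: apply Trouvé's original Theorem 6.3 to $(Q^T)$ to obtain, for each horizon $N$, a non-increasing temperature schedule $(T_k(N))_{0\le k\le N}$ with $\PP(X_N\in\cX_\lambda)\le c/N^{\hat\alpha_\lambda}$, then set $\epsilon_k(N) \eqdef e^{-1/T_k(N)}$. This schedule is non-increasing because $T\mapsto e^{-1/T}$ is increasing, and under the dictionary above the resulting bound is exactly the claimed inequality.
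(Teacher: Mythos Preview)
Your proposal is correct and is essentially the same approach as the paper's: the paper does not give a standalone proof of this statement but simply imports Trouv\'e's Theorem~6.3 via the dictionary $\epsilon=e^{-1/T}$, $V=r$, potential${}={}$virtual energy, relying on the one-line claim that ``a \ac{rpmp} \dots\ is also a \ac{gsa}''; you have merely unpacked that claim by checking the three admissibility conditions of \cref{def:gsa} explicitly. One cosmetic point worth tidying: your $q_{x,y}\eqdef\lim_{\epsilon\to0}\epsilon^{-r(x\to y)}P^\epsilon_{x,y}$ is not in general row-stochastic (rows exceed~$1$ whenever some outgoing transition has positive resistance), so to match the literal requirement that $q$ be a Markov kernel you should normalize each row and absorb the bounded normalization constants into~$\kappa$.
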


The above convergence speed becomes optimal when $\lambda$ is close to $0$.
Applying \cref{thm:trouvé_convergence_rate} with small enough $\lambda$ yields the following result.
\begin{corollary}
    \label{thm:trouvé_convergence_rate_0}
     There exists a constant $c>0$ such that for all finite horizon $N$, there exists a non-increasing cooling schedule $(\epsilon_k(N))_{0\le k\le N}$ such that if $(X_k)_{k\ge0}$ is an inhomogeneous Markov process following $(P^{\epsilon_k(N)})_{0\le k\le N}$ with any starting distribution,
     \begin{align*}
         \PP(X_N\notin\cX^\star) \le \frac{c}{N^{\hat{\alpha}_0}}
     \end{align*}
     where
     \begin{align*}
        \hat{\alpha}_0 \eqdef \min\La\frac{\gamma(\Pi)-\gamma^\star,}{H_e(\Pi)} \quad|\quad \text{$\Pi$ cycle : $\gamma(\Pi)>\gamma^\star$}\Ra.
    \end{align*}
\end{corollary}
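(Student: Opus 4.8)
The plan is to obtain this corollary directly from \cref{thm:trouvé_convergence_rate}, exploiting the finiteness of $\cX$ to choose the parameter $\lambda$ small enough that both $\cX_\lambda$ and $\hat{\alpha}_\lambda$ stabilise at their $\lambda=0$ values.

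First I would observe that, since $\cX$ is finite, the potential $\gamma$ takes only finitely many values, so the gap
\[
\lambda_0 \eqdef \min\bLa \gamma(x)-\gamma^\star ~:~ x\in\cX,~\gamma(x)>\gamma^\star \bRa
\]
is well-defined and strictly positive, with the convention $\lambda_0=\pinfty$ when $\cX=\cX^\star$ (in which case the statement is vacuous). I would then fix any $\lambda\in(0,\lambda_0]$, say $\lambda=\lambda_0$.

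The core of the argument is to check that for this $\lambda$ the two $\lambda$-dependent objects in \cref{thm:trouvé_convergence_rate} agree with their $0$-indexed counterparts. On the set level, $\cX_\lambda=\bLa x:\gamma(x)\ge\gamma^\star+\lambda\bRa$ coincides with $\bLa x:\gamma(x)>\gamma^\star\bRa=\cX\setminus\cX^\star$, since by definition of $\lambda_0$ no state has potential in the open interval $(\gamma^\star,\gamma^\star+\lambda_0)$. On the exponent level, for any cycle $\Pi$ with $\gamma(\Pi)>\gamma^\star$ the value $\gamma(\Pi)=\min_{x\in\Pi}\gamma(x)$ is itself a potential value, hence $\gamma(\Pi)-\gamma^\star\ge\lambda_0\ge\lambda$; therefore $\max(\gamma(\Pi)-\gamma^\star,\lambda)=\gamma(\Pi)-\gamma^\star$ for every such cycle, and the minimum defining $\hat{\alpha}_\lambda$ is taken over exactly the same expression as the one defining $\hat{\alpha}_0$, giving $\hat{\alpha}_\lambda=\hat{\alpha}_0$.

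Finally I would invoke \cref{thm:trouvé_convergence_rate} at this $\lambda$: it yields a constant $c>0$ and, for every horizon $N$, a non-increasing schedule $(\epsilon_k(N))_{0\le k\le N}$ with $\PP(X_N\in\cX_\lambda)\le c/N^{\hat{\alpha}_\lambda}$, and substituting the two identities just established rewrites this as $\PP(X_N\notin\cX^\star)\le c/N^{\hat{\alpha}_0}$. The only step demanding any attention is the identity $\hat{\alpha}_\lambda=\hat{\alpha}_0$, which rests on the remark that each $\gamma(\Pi)$ is attained as some $\gamma(x)$ and so sits at distance at least $\lambda_0$ above $\gamma^\star$ whenever it exceeds it; everything else is a matter of unwinding definitions, so I do not anticipate a genuine obstacle here.
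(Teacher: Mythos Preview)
Your proposal is correct and follows exactly the approach the paper indicates: the paper simply states that one applies \cref{thm:trouvé_convergence_rate} with small enough $\lambda$, having noted earlier that since $\cX$ is finite, both $\cX_\lambda$ and the $\lambda$-dependent quantities stabilise at their $\lambda=0$ values for $\lambda$ near $0$. Your argument makes this explicit by exhibiting the gap $\lambda_0$ and verifying the two identifications $\cX_\lambda=\cX\setminus\cX^\star$ and $\hat\alpha_\lambda=\hat\alpha_0$; the key observation that $\gamma(\Pi)$ is itself a value of $\gamma$ and hence respects the gap is precisely what is needed.
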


\subsection{Application to ITEL}
\label{sec:eps_k:application}

The above results apply to all \acp{rpmp}, hence in particular to \ac{itel}.
They mostly serve a theoretical purpose: \cref{thm:trouvé_convergence_0} hints that the cooling schedule should be of order $\epsilon_k = k^{-1/\Gamma_0}$ at the fastest.
Moreover, the optimal convergence speed given by \cref{thm:trouvé_convergence_rate_0} is of order $N^{-\hat{\alpha}_0}$ for finite horizon $N$.
The precise definition of $\Gamma_0$, $\hat\alpha_0$ and of the optimal cooling schedule are complex and can be found in the proof of \cite[Theorem 7.1]{catoni1992} on which \cite[Theorem 6.3]{trouvé1996} is based.
These definitions involve problem-dependent quantities and their precise computation in the case of \ac{itel} is beyond the scope of this paper.
The cooling rate and the convergence speed are both of polynomial order, which is not very fast in practice. In particular, an exponentially decreasing cooling schedule may give better result, although it will theoretically not converge.

\section{Conclusion}

In this paper, we introduced a more efficient algorithm for distributed learning in the case where rewards are bounded. We also extended the scope of the algorithm to random games with minor assumptions of the utility distributions and showed that with appropriate parameters the algorithm keeps its convergence properties.
Finally, we briefly discussed the possibility of a dynamic perturbation to obtain a stronger sense of convergence towards optimal states.

\printbibliography

\appendix
\newpage
\section{Proof of ITEL convergence}
\label{sec:itel:proof}

In this section we prove the results stated in \cref{sec:itel:results}, that is \cref{prop:itel:recurrence_P_0,prop:itel:recurrence_P_eps,thm:itel:X_star}.
Let $P^\varepsilon$ be the Markov process defined by the \ac{itel} algorithm for any $\varepsilon\in[0,1)$.
As discussed in \cref{sec:intro}, the computation of the potential $\gamma$ is done over recurrence classes of the unperturbed process $P^0$ instead of the whole Markov chain.
Therefore, the first step of the proof is to identify these recurrence classes. The second step is to compute resistances and then potentials in order to describe $\cX^\star$.
The proof given in this section follows the same reasoning as in \cite{TEL}, although the organization of lemmas and some other details differ slightly.

\subsection{Recurrence Classes of the Unperturbed Process}
\label{sec:itel:proof_P_0}

The goal of this section is to study paths of zero resistance, \ie, paths that exist in the unperturbed process.
In particular, we are going to show that in $P^0$ any state can reach a state in $\cC\cup\{D\}$, and that each state $x\in\cC$ is absorbing.
This then implies \cref{prop:itel:recurrence_P_0}: the recurrence classes of $P^0$ are the singletons $\{x\}\subset\cC$ and possibly the communication class of $D$.
First of all, let us show that from any state there is a path to a state without intermediate moods \hopeful{} and \watchful{} or non-aligned benchmarks.
\begin{lemma}
    \label{lemma:itel:path_to_aligned}
    In the unperturbed process $P^0$ there is a path from any state to a state where all agents are either content with aligned benchmark or discontent, and so that discontent agents remain so along the path.
\end{lemma}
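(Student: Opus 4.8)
The plan is to exhibit an explicit path in $P^0$, i.e.\ a finite sequence of transitions each having positive probability when $\epsilon=0$ (equivalently, zero resistance), along which the \emph{played} action profile is held constant while every agent's mood resolves. First I would fix a single profile $\ba$ to be played at every step of the path: each non-discontent agent plays its benchmark action $\la_i$, and each \discontent{} agent plays an arbitrary but fixed action. Every one of these choices is zero-resistance in $P^0$: a \content{} agent plays $\la_i$ with probability $1-\epsilon\to1$ (or deterministically when $\lu_i=1$), a \hopeful{} or \watchful{} agent plays $\la_i$ deterministically, and a \discontent{} agent can be made to play any prescribed action with positive probability since it explores uniformly over the finite action set. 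With the profile pinned at $\ba$, each agent $i$ observes the \emph{same} deterministic utility $u_i=U_i(\ba)$ at every round.

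The crucial device is to keep this profile constant even as moods change. I keep every initially \discontent{} agent \discontent{} by always selecting ``reject'', which is zero-resistance: the acceptance probability $\epsilon^{F(u)}\wedge c_F$ tends to $0$ when $F(u)>0$ and equals $c_F<1$ when $F(u)=0$, so rejection has probability bounded away from $0$ in either case. Moreover, whenever a previously non-discontent agent $i$ turns \discontent{} during the path, I have it \emph{explore its own former action $\la_i$} (again a positive-probability choice), so that the played profile, and hence every $u_i$, never changes. This decouples the per-agent dynamics: I may analyse each non-discontent agent in isolation against the fixed comparison of $u_i$ with its current benchmark $\lu_i$.

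It then remains to run the profile $\ba$ for two rounds and read off the (finite) case table in \cref{tab:itel:update_policy}. If $u_i=\lu_i$, a \content{}, \hopeful{}, or \watchful{} agent reverts to $(\content,\la_i,\lu_i)$ in one step, which is aligned with $\ba$ since $\lu_i=U_i(\ba)$. If $u_i>\lu_i$, the agent reaches $(\content,\la_i,u_i)$ within two steps (a \content{} agent becomes \hopeful{} and then accepts; a \hopeful{} agent accepts at once; a \watchful{} agent becomes \hopeful{} then accepts), again aligned. If $u_i<\lu_i$, the agent becomes \discontent{} within two steps (passing through \watchful{} before rejecting) and is thereafter frozen as above. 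Since a benchmark \emph{action} is never altered (an accept only overwrites the benchmark utility with $u_i$), the played profile stays equal to $\ba$ throughout, so after two rounds every agent is either \content{} and aligned with $\ba$ or \discontent{}, and every agent that was ever \discontent{} remained so.

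I expect the main obstacle to be precisely this coupling: because each $u_i$ depends on the whole profile, an agent slipping into discontent would normally perturb the utilities seen by the others and spoil the clean per-agent case analysis. The freezing trick, forcing both old and newly \discontent{} agents to keep playing their benchmark actions via uniform exploration, is exactly what removes this difficulty and reduces the lemma to the single-agent mood table above. I would also note that \cref{assum:interdependence} plays no role here; it enters only later, when ruling out spurious recurrence classes and establishing irreducibility.
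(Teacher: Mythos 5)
Your proposal is correct and follows essentially the same argument as the paper: fix the profile $\bla$ (extended arbitrarily to discontent agents), play it for two rounds with zero resistance—discontent and newly discontent agents replaying it via uniform exploration—and resolve each agent's mood by the case analysis $u_i=\lu_i$, $u_i>\lu_i$, $u_i<\lu_i$ from \cref{tab:itel:update_policy}. Your explicit "freezing trick" is exactly what the paper does implicitly when it lets every agent play according to $\bla$ throughout the path, and your observation that \cref{assum:interdependence} is not needed here also matches the paper, where it only enters in the subsequent lemma.
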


Recall that discontent agents store no benchmark actions.
When saying that a state $x=(\blm,\bla,\blu)$ featuring discontent agents is aligned in \cref{lemma:itel:path_to_aligned}, we mean that $\bla$ can be extended to discontent agents in a way that $\blu$ is aligned with $\bla$.

\begin{proof}
Let $x=(\blm,\bla,\blu)\in\cX$ be any state and extend $\bla$ with arbitrary actions for discontent agents.
Denote $\bu$ the utilities resulting from $\bla$, which may differ from $\blu$ if the latter is not aligned with $\bla$.
Let every agent play according to $\bla$ for two steps.
This is possible according to \cref{tab:itel:action_policy}. Indeed, in the unperturbed process, a non-discontent agent always play its benchmark action. Moreover, a discontent agent plays at random uniformly among all actions, hence has a positive probability of playing according to $\bla$ twice in a row.
agents observe $\bu$ for two steps and the following behaviors may happen with positive probability according to \cref{tab:itel:update_policy}:
\begin{itemize}
    \item Discontent agents remain discontent.
    \item Watchful agents with $u<\lu$ reject after one step and end up discontent.
    \item Content or hopeful agents with $u<\lu$ become watchful, then discontent.
    \item Content agents with $u=\lu$ remain in their content state.
    \item Hopeful agents with $u\ge \lu$ and watchful agents with $u=\lu$ accept the outcome after one step and end up content.
    \item Content or watchful agents with $u>\lu$ become hopeful, then content. 
\end{itemize}
Therefore, there exists in $P^0$ a path with positive probability from $x$ to a state $y=(\blm',\bla,\bu)$ where all agents are content or discontent, and $\bu$ is aligned with $\ba$.
\end{proof}

Using the interdependence assumption \cref{assum:interdependence}, it is possible to show that discontent agents can always explore in a way that makes other agents become discontent.
Eventually, if at least one agent is discontent then it can bring all other agents to a discontent mood.
\begin{lemma}
    \label{lemma:itel:path_to_D}
    In the unperturbed process $P^0$ there is a path to $D$ from any state featuring at least one discontent agent.
\end{lemma}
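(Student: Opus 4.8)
The plan is to argue by induction on the number of discontent agents: I will show that from any state in which at least one but not all agents are discontent, there is a zero-resistance path (a path in $P^0$) to a state with strictly more discontent agents, along which every currently-discontent agent stays discontent. Since there are finitely many agents and the all-discontent state is precisely $D$, iterating this step produces a path to $D$. Throughout I use that in $P^0$ a discontent agent explores any prescribed action with positive probability and rejects---hence remains discontent---with positive probability (the rejection probability is $1-c_F>0$, or $1$ when $F>0$), whereas every content, hopeful, or watchful transition is deterministic given the observed utility; consequently every path I build has zero resistance.

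For the inductive step I would first invoke \cref{lemma:itel:path_to_aligned} to reach a state in which every non-discontent agent is content and aligned, the discontent agents being unchanged. This gives each content agent $i$ a benchmark $\lu_i = U_i(\bla)$, where $\bla$ is the benchmark profile extended to the discontent agents. If all agents are now discontent we are done; otherwise the discontent set $J$ is a nonempty proper subset, so \cref{assum:interdependence}, applied to $\bla$ and $J$, provides a content agent $i\notin J$ and actions $a'_J$ with $U_i(a'_J,\la_{-J})\ne U_i(\bla)=\lu_i$. A useful observation is that whatever moods the remaining non-discontent agents are in, they all keep playing their benchmark actions, so the played action profile is determined entirely by the discontent agents' choices together with the fixed benchmark actions; this lets me steer agent $i$'s observations at will.

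If $U_i(a'_J,\la_{-J})<\lu_i$, I let the discontent agents play $a'_J$ for two consecutive steps while all other agents play their benchmarks. Agent $i$ then observes a utility below $\lu_i$ twice and passes content $\to$ watchful $\to$ discontent, while the discontent agents reject and stay discontent. The harder case is $U_i(a'_J,\la_{-J})>\lu_i$, where interdependence yields only an improvement; the key manoeuvre here is to raise the benchmark and then withdraw it. Letting the discontent agents play $a'_J$ for two steps drives agent $i$ through content $\to$ hopeful $\to$ content with the new higher benchmark $\lu_i'=U_i(a'_J,\la_{-J})$ and unchanged benchmark action $\la_i$. Then the discontent agents revert to $\la_J$, so agent $i$ observes $U_i(\bla)=\lu_i<\lu_i'$ twice and passes content $\to$ watchful $\to$ discontent. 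In either case agent $i$ becomes discontent and the previously-discontent agents remain so, so the discontent count strictly increases.

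The main obstacle is exactly the improvement case: because \cref{assum:interdependence} guarantees only that some agent's utility can be \emph{changed}, not lowered, one cannot directly manufacture the two deteriorations needed to trigger a watchful-then-reject transition, and the raise-then-lower trick is what circumvents this. A secondary point to verify is that the other content agents, which may drift into intermediate moods or even turn discontent while the discontent agents manipulate the profile, never obstruct the construction: their benchmark \emph{actions} do not change, since an agent alters its benchmark action only by exploring and then accepting, which none of these non-exploring agents does, so agent $i$'s prescribed observations are unaffected; and any extra agent turning discontent only helps. Re-applying \cref{lemma:itel:path_to_aligned} at the start of each inductive step restores the content-aligned structure without ever decreasing the discontent count, which closes the induction.
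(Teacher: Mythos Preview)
Your proof is correct and follows essentially the same approach as the paper: reduce to an aligned state via \cref{lemma:itel:path_to_aligned}, use interdependence to have the discontent agents perturb some content agent's utility, and in the improvement case use the raise-then-withdraw manoeuvre to force that agent through watchful into discontent, then iterate. The only cosmetic difference is that the paper packages both cases into a single four-step sequence (play $(a_J,\la_{-J})$ twice, then $\bla$ twice) rather than splitting on the sign of the change; your case analysis and the paper's uniform schedule are the same argument.
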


\begin{proof}
Let $x=(\blm,\bla,\blu)\in\cX$ be a state with at least one discontent agent.
By first applying \cref{lemma:itel:path_to_aligned}, we can assume that $x$ only features content or discontent agents and that $\bla$ is extended to discontent agents and yields the utilities $\blu$.
Let $J$ be the set of discontent agents and $a_J\ne\la_J$ be any other actions for the discontent agents. Denote $\bu$ the utilities resulting from $(a_J,\la_{-J})$.
Assume that every agent play according to $(a_J,\la_{-J})$ for two steps, then goes back to playing $\bla$ for another two steps.
Agents then observe $\bu$ for two steps then $\blu$ for the other two steps, and the following behaviors may happen with positive probability:
\begin{itemize}
    \item Discontent agents remain discontent.
    \item Content agents with $u<\lu$ become watchful then discontent. They remain discontent for the following two steps.
    \item Content agents with $u>\lu$ become hopeful then accept $u$ as their new benchmark. They then become watchful then discontent, as their new benchmark is $u$ but they observe $\lu$.
    \item Content agents with $u=\lu$ remain in their content state.
\end{itemize}
As in \cref{lemma:itel:path_to_aligned}, this has positive probability to happen in $P^0$ according to \cref{tab:itel:policies}.
Now, using \cref{assum:interdependence} over the set of discontent agents in $x$, one can choose $J$ and $a_J$ such that $u_i\ne\lu_i$ for some content agent $i$. In this case, at least one content agent fits one of the last two categories, and therefore eventually becomes discontent along the described path.
We have shown that there is a path in $P^0$ from $x$ to another state with strictly more discontent agents. Iterating the reasoning until all agents are discontent eventually shows that there is a path in $P^0$ from the original state $x$ to $D$.
\end{proof}

Combining both \cref{lemma:itel:path_to_aligned,lemma:itel:path_to_D} concludes that in $P^0$, there is a path from any state to either $D$ or a state in $\cC$.
Indeed, there is a first path to a state where all agents are either discontent or content and aligned. If this state is not in $\cC$ then there is at least one discontent agent hence a path to $D$.
This implies that the recurrence classes of $P^0$ are included in the communication classes of $\cC$ and $D$.
Since there are no explorations from content agents in $P^0$, any state $\{x\}\subset\cC$ is absorbing, hence a recurrence class of $P^0$.
This proves \cref{prop:itel:recurrence_P_0}.

Regarding whether or not $D$ is recurrent, allowing $F$ to be equal to $0$ makes the discussion more complex than in \ac{tel}. In \ac{tel}, a discontent agent always stays so, hence $D$ is be absorbing and $\{D\}$ is a recurrence class of $P^0$.
In \ac{itel}, it is possible that the communication class of $D$ contains other states as a discontent agent accepts an exploration when the observed utility $u$ satisfies $F(u)=0$.
Furthermore, it is possible that $D$ is not recurrent if all agents can simultaneously observe a utility satisfying $F(u)=0$, as this implies the existence of a path from $D$ to an aligned state, which is absorbing.
Whether or not the class of $D$ is recurrent does not influence the following.

From now on we consider the resistance graph $\cG$ over the class of $D$ and classes $\{x\}\subset\cC$. We remove the brackets and refer to them as $D$ and $x\in\cC$ to ease notations. Note that if $D$ is not recurrent then its outward resistance is $r^\star(D)=0$ and it follows immediately that it cannot minimize $\gamma$.

\subsection{Resistances and Potentials}
\label{sec:itel:proof_res}

Now that we have identified the recurrence classes of $P^0$, we compute the resistance between these classes in the perturbed process $P^\varepsilon$ and then their potential.
From now on we reason over the graph of the classes $x\in\cC$ and $D$, where the resistance of any edge $x\to y$ is the lowest resistance of a path $x\leadsto y$ in $P^\varepsilon$.
Recall that $\cE\subset\cC$ is the set of equilibrium states and $\cA\subset\cE$ is the subset of optimal states.
Definitions of easy edges, rooted trees, and potentials were given in \cref{def:resistance,def:potential}.
Definitions of virtual welfare and stability were given in \cref{def:itel:tW&tS}.

It is not necessary to compute all possible resistances in $\cG$ to compute potentials, as we are only interested in minimal rooted trees. In fact, it is sufficient to study the resistances of edges $D\to x$ along with the easy edges of states $x\in\cC$.
\begin{lemma}
    \label{lemma:itel:r_x}
    \hfill
    \begin{enumerate}[(i)]
       \item If $x\in\cA$, $r^\star(x) = \pinfty$.
       \item If $x\in\cE\setminus\cA$, $r^\star(x) = 2 = r(x\to D)$.
       \item If $x\in\cC\setminus\cE$, $r^\star(x) = 2 - \tS(x)$, and if $x\to D$ is not easy then any easy edge $x\to y\in\cC$ leads to a strictly better welfare, that is $W(y)>W(x)$.
    \end{enumerate}
\end{lemma}

\begin{proof}
Let $x=(\blm,\bla,\blu)\in\cC$.

\medskip\noindent\textbf{(i).}
When $x$ is optimal, no agent can explore, so that $\bla$ is played continuously and so is observed $\blu$. Agents never change state, and the state is absorbing, \ie, $r^\star(x) = \pinfty$.

\medskip\noindent\textbf{(ii).}
When $x$ is an equilibrium, all agents are at an equilibrium, hence cannot accept their exploration if they are the only agent to explore as the observed utility would be lower than their benchmark.
Hence, a single exploration cannot leave the recurrence class of $x$: other agents may become hopeful or watchful but then would require another exploration to change state, else they revert to their original state.
Therefore at least two explorations, either successive or simultaneous, are needed to leave the recurrence class of $x$, so that $r^\star(x)\ge2$.

Conversely, one can construct a path $x\leadsto D$ of resistance equal to $2$ using a reasoning almost identical to that of \cref{lemma:itel:path_to_D}.
Indeed, consider the same path as the one described, except that one agent $j$ is chosen as a content agent at an equilibrium but not optimized---which exists since $x\in\cE\setminus\cA$---instead of $J$ the set of discontent agents.
Let $j$ play some action $a_j\ne\la_j$ twice in a row then go back to playing $\la_j$ while all other agents play according to $\la$. As $j$ is at an equilibrium, its observation $u_j$ in the action profile $(a_j,\la_{-j})$ is lower than its benchmark $\lu_j$. Therefore $j$ always reverts after playing $a_j$.
This choice of actions implies a resistance of $2$, as two explorations are performed.
As in \cref{lemma:itel:path_to_D}, agents observe $\bu$ for two steps then $\blu$ for the other two steps, and the following behaviors may happen with positive probability:
\begin{itemize}
    \item $j$ reverts twice then remains in its content state.
    \item Content agents with $u<\lu$ become watchful then discontent. They remain discontent for the following two steps.
    \item Content agents with $u>\lu$ become hopeful then accept $u$ as their new benchmark. They then become watchful then discontent, as their new benchmark is $u$ but they observe $\lu$.
    \item Content agents with $u=\lu$ remain in their content state.
\end{itemize}
According to \cref{tab:itel:update_policy}, the behaviors described above happen with no additional resistance under the aforementioned choices of actions.
Now, using the interdependence assumption \cref{assum:interdependence}, one can choose $a_j$ such that $u_i\ne\lu_i$ for an agent $i\ne j$. In other words, at least one agent fits in one of the last two categories, hence eventually becomes discontent along the described path.
We have shown that there is a path of resistance equal to $2$ from $x$ to another state with at least one discontent agent. We conclude with \cref{lemma:itel:path_to_D} that with no additional resistance the path can be extended to $D$.
Therefore, $r(x\to D) = 2 = r^\star(x)$.

\medskip\noindent\textbf{(iii).}
When $x$ is not an equilibrium, one exploration is still not enough to change state, but an agent $i$ that is not an at an equilibrium may accept the outcome of its exploration with some resistance controlled by $G$.
Let $i$ be such agent, and let $a_i$ be an action such that the observed utility $u_i=U_i(a_i,\la_{-i})$ when $i$ explores $a_i$ satisfies $u_i>\lu_i$.
The resistance of accepting this exploration is $G(\lu_i,u_i)$, which is exactly $1-\tS(x)$ according to \cref{def:itel:tW&tS} when $i$ and $a_i$ are chosen as to minimize $G(\lu_i,u_i)$. Adding the cost of exploration we get the lower bound $r^\star(x) \ge 2-\tS(x)$.
Note that this bound is strictly lower than $2$ as $\tS(x)>1-G_0\ge0$ under \cref{cond:itel}.

Let us now show that accepting this exploration is sufficient to change state, so that $r^\star(x) = 2-\tS(x)$.
Consider the path similar to the first two steps described in (ii) except that the explorer is $i$ and it accepts its exploration after the first step and plays it again instead of reverting and exploring again.
This causes the others content agents to either become discontent or to remain content with a utility $u_j\ge\lu_j$.
If all agents remain content, the new state $y$ is an aligned state and all agents have either improved in utility or observed no change, hence $W(y)>W(x)$ since at least $i$ did improve.
Else there is a path of zero resistance to $D$ according to \cref{lemma:itel:path_to_D}.
Overall this path has an associated resistance of $2-\tS(x)$ due to $i$ exploring and accepting. We conclude that $r^\star(x) = 2-\tS(x)$.
\end{proof}

\begin{lemma}
    \label{lemma:itel:r_D}
    For all $x\in\cC$, $r(D\to x) = 1 - \tW(x)$.
\end{lemma}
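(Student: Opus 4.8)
The plan is to prove the two inequalities $r(D\to x)\le 1-\tW(x)$ and $r(D\to x)\ge 1-\tW(x)$ separately, recalling from \cref{def:itel:tW&tS} that $1-\tW(x)=\sum_i F(\lu_i)$, where $x=(\blm,\bla,\blu)$ and $\blu=(\lu_i)_{i}$ are the (aligned) benchmark utilities of $x$.

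For the upper bound, I would exhibit a single-step path from $D$ to $x$. From $D$ every agent is \discontent{} and explores uniformly at random, so with constant---hence zero-resistance---probability all agents simultaneously play the benchmark profile $\bla$. Since $x$ is aligned, each agent $i$ then observes exactly $\lu_i=U_i(\bla)$, and according to \cref{tab:itel:update_policy} accepts with probability $\epsilon^{F(\lu_i)}\wedge c_F$, landing in the content state $(\content,\la_i,\lu_i)$. As all agents are content in an aligned state, the resulting state is precisely $x$, and the resistance of this step is $\sum_i F(\lu_i)=1-\tW(x)$, giving $r(D\to x)\le 1-\tW(x)$.

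For the lower bound, I would argue that every path $D\leadsto x$ charges at least $F(\lu_i)$ to each agent $i$. The resistance of a path is the sum over its edges of the edge resistances, and each edge resistance decomposes, after fixing the realizing action profile, into a sum of nonnegative per-agent contributions; hence it suffices to bound the contribution of a single agent. The key structural observation---and the crux of the argument---is that while an agent remains in the content family of moods (\content, \hopeful, \watchful), its benchmark utility can only increase or stay constant: inspecting \cref{tab:itel:update_policy}, the only two benchmark-utility-changing moves within this family (content-explore-accept and hopeful-accept) are both triggered by an observation strictly above the current benchmark, while the sole move that lowers the benchmark is the \watchful{}$\to$reject transition, which sends the agent back to \discontent. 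Since agent $i$ starts \discontent{} at $D$ and ends \content{} in $x$, let $t_i$ be the last time it is \discontent; the edge at $t_i$ is an acceptance from \discontent{} at some observed utility $u_i^\star$, with individual update resistance $F(u_i^\star)$, and thereafter the benchmark is non-decreasing up to its final value $\lu_i$, so $u_i^\star\le\lu_i$.

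Because $F$ is non-increasing, $F(u_i^\star)\ge F(\lu_i)$, so agent $i$'s total contribution along the path is at least $F(\lu_i)$; summing over all agents yields $r(D\to x)\ge\sum_i F(\lu_i)=1-\tW(x)$, and combining with the upper bound gives the claimed equality. The main obstacle is the lower bound, and specifically justifying the monotonicity of the benchmark utility along any content-family segment together with the fact that each agent must perform at least one (final) acceptance out of \discontent{}; once these are established, the resistance bookkeeping is routine.
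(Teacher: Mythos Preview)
Your proposal is correct and follows essentially the same approach as the paper: a one-step direct path where all discontent agents simultaneously play $\bla$ and accept gives the upper bound, and for the lower bound the paper likewise uses that a content agent's benchmark utility is non-decreasing until it becomes discontent again, so the last discontent-to-content acceptance of each agent $i$ occurs at some $u_i^\star\le\lu_i$ and contributes $F(u_i^\star)\ge F(\lu_i)$. Your write-up is in fact slightly more explicit than the paper's in justifying the monotonicity by inspecting the update table, but the argument is the same.
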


\begin{proof}
Let $x=(\blm,\bla,\blu)\in\cC$ and consider any path $D\leadsto x$.
Each agent $i$ eventually ends up content with utility $\lu_i$. Since a content agent cannot decrease in benchmark utility without first getting discontent, there is a point in the path where $i$ goes from discontent to content by accepting a utility $u_i\le\lu_i$. This implies a resistance of $F(u_i)\ge F(\lu_i)$ as $F$ is non-increasing.
Each agent does such acceptation at some point in the path, hence the total resistance of the path is at least $\sum_i F(\lu_i) = 1-\tW(x)$.

Conversely, there is a direct path $D\to x$ with resistance $\sum_i F(\lu_i)$ when all agents choose to play according to $\bla$ simultaneously and to accept the outcome, hence $r(D\to x) = 1-\tW(x)$.
\end{proof}

Before moving on to computing potentials, notice that the previous lemmas allow us to conclude on the nature of the \ac{itel} process.
First of all, states $x\in\cA$ are absorbing even in the perturbed process according to \cref{lemma:itel:r_x}.
Moreover, there is a path in $P^\varepsilon$ with finite resistance from $D$ to any state $x\in\cC$ due to \cref{lemma:itel:r_D}.
We can show that there is also a path from $x$ back to either $D$ or a state in $\cA$ when $x\notin\cA$. Indeed, \cref{lemma:itel:r_x} shows that an easy edge leaving $x$ leads either to $D$ or to another state in $\cC$ with strictly higher welfare. Since there is a finite amount of states in $\cC$, following easy edges eventually leads to $D$ or to $\cA$. Indeed, if it was not the case, the easy edges would create a cycle $x=x_1\to x_2\to \dots\to x_m=x$ at one point, but this cycle would satisfy $W(x)=W(x_1)<W(x_2)<\dots<W(x_m)=W(x)$, which is a contradiction.

Therefore, if $\cA\ne\emptyset$, there is a path from any state to a state in $\cA$ which is absorbing. Hence the perturbed process is not irreducible and its recurrence classes are exactly the singletons $\{x\}\subset\cA$.
On the other hand, when $\cA=\emptyset$, $D$ communicates with all states in $\cC$, so that all these states belong to the same recurrence class. Hence the perturbed process consists of a unique recurrence class, \ie, is irreducible\footnote{There may actually be states that are not be part of the recurrence class of $P^\varepsilon$. In this case we restrict ourselves to the study of the single recurrence class, which is a \ac{rpmp}. This is without loss of generality, as the algorithm ends up \ac{as} in this recurrence class regardless of the starting state.}. It is also aperiodic as the transition $D\to D$ has positive probability.
This concludes \cref{prop:itel:recurrence_P_eps}.
When $\cA\ne\emptyset$, the process converges \ac{as} to one of its recurrence classes, which is one of the absorbing states $x\in\cA$.

The rest of this section is devoted to the case where $\cA=\emptyset$, where \cref{thm:sss_min_gamma} can be applied.
We now study minimal rooted trees for each state, in order to compute their potential.
\begin{lemma}
    \label{lemma:itel:potential}
    \hfill
    \begin{enumerate}[(i)]
        \item $\gamma(D) = \sum_{x\in\cC} r^\star(x)$.
        \item $\gamma(x) = \gamma(D) - r^\star(x) + r(D\to x) =
        \begin{cases}
            \gamma(D) - 1 - \tW(x) & \text{if $x\in\cE$,}\\
            \gamma(D) - 1 - \tW(x) +  \tS(x) & \text{if $x\in\cC\setminus\cE$.}
        \end{cases}$
    \end{enumerate}
\end{lemma}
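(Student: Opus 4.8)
The plan is to compute the two quantities separately, feeding in the resistance values already established in \cref{lemma:itel:r_x,lemma:itel:r_D}, and exploiting the fact (proved just before, using $\cA=\emptyset$) that following easy edges from any state strictly increases welfare until the all-discontent state $D$ is reached. The overall shape is a single tree-surgery argument connecting $\gamma(D)$ to each $\gamma(x)$.

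\textbf{Part (i).} First I would exhibit the minimal $D$-tree. Assigning to each $x\in\cC$ its easy edge gives a subgraph where every $x\in\cC$ has out-degree one and $D$ has none. By \cref{lemma:itel:r_x}(iii) an easy edge out of a non-equilibrium state either points to $D$ or to a state of strictly larger welfare, and by (ii) easy edges out of equilibria point to $D$; hence no directed cycle can close among the $\cC$-states (welfare would have to strictly increase around it), so every easy-edge path terminates at $D$. This easy-edge forest is thus a genuine spanning tree rooted at $D$, of resistance $\sum_{x\in\cC}r^\star(x)$, and since every edge realizes the minimal outward resistance $r^\star(x)$ no $D$-tree can have smaller resistance. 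Therefore $\gamma(D)=\sum_{x\in\cC}r^\star(x)$.

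\textbf{Part (ii), upper bound.} Fix $x_0\in\cC$ and let $T_0$ be the minimal $D$-tree above. Deleting the easy edge out of $x_0$ makes $x_0$ the root of the subtree of its easy-edge descendants, and inserting $D\to x_0$ reattaches the remaining component through $x_0$; the result is acyclic because $x_0$ now has no outgoing edge, hence a valid $x_0$-tree. Its resistance is $\gamma(D)-r^\star(x_0)+r(D\to x_0)$, so $\gamma(x_0)\le\gamma(D)-r^\star(x_0)+r(D\to x_0)$.

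\textbf{Part (ii), lower bound, and the main obstacle.} For the matching inequality I would take an arbitrary $x_0$-tree $T$, write $D\to z$ for its unique outgoing edge at $D$, and run the reverse surgery: removing $D\to z$ splits $T$ into a component $B\ni D$ and a component $A\ni x_0$, and reattaching any edge $x_0\to d$ with $d\in B$ restores a $D$-tree, whence $\gamma(D)\le r(T)-r(D\to z)+r(x_0\to d)$ and thus $r(T)\ge\gamma(D)+r(D\to z)-r(x_0\to d)$. To close the gap I must produce a choice making $r(x_0\to d)$ as small as $r^\star(x_0)$ while forcing $r(D\to z)\ge r(D\to x_0)$. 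This is where welfare-monotonicity re-enters: on a minimal tree the edges can be taken essentially easy, so the in-tree path $z\rightsquigarrow x_0$ is welfare-increasing and gives $\tW(z)\le\tW(x_0)$; since $r(D\to\cdot)=1-\tW(\cdot)$, this makes $D\to x_0$ the cheapest admissible reconnection and identifies the minimal $x_0$-tree with the single-swap tree of the upper bound. I expect this exchange-and-acyclicity bookkeeping — ruling out cheaper reconnections and showing $D$'s edge may be taken as $D\to x_0$ — to be the delicate step, exactly as in the corresponding argument of \cite{TEL}. Combining both bounds yields $\gamma(x_0)=\gamma(D)-r^\star(x_0)+r(D\to x_0)$, and substituting $r(D\to x_0)=1-\tW(x_0)$ together with $r^\star(x_0)=2$ for $x_0\in\cE$ and $r^\star(x_0)=2-\tS(x_0)$ for $x_0\in\cC\setminus\cE$ gives the two announced expressions.
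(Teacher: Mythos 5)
Your part (i) and the upper bound in part (ii) are exactly the paper's argument (easy-edge $D$-tree, then a single swap), and both are correct. The genuine gap is in the lower bound of part (ii), which is the heart of the lemma. The exchange inequality $r(T)\ge\gamma(D)+r(D\to z)-r(x_0\to d)$ is valid (granting that a reconnection $d\in B$ with $r(x_0\to d)=r^\star(x_0)$ exists at all, which is itself not automatic since the easy edge out of $x_0$ may land in the component $A$), but it is too weak to close the proof: nothing forces an arbitrary $x_0$-tree to satisfy $r(D\to z)\ge r(D\to x_0)$. Concretely, let $z$ maximize $\tW$ and consider the $x_0$-tree with edges $D\to z$, $z\to x_0$, and $y\to x_0$ for every other $y\in\cC$ (all these aggregated edges have finite resistance, so this is a legitimate $x_0$-tree). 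For this tree your bound gives at best $r(T)\ge\gamma(D)+\bigl(1-\tW(z)\bigr)-r^\star(x_0)$, which falls short of the claimed value $\gamma(D)+\bigl(1-\tW(x_0)\bigr)-r^\star(x_0)$ by $\tW(z)-\tW(x_0)>0$. Your proposed repair --- restricting to minimal trees whose edges ``can be taken essentially easy'' so that the in-tree path $z\leadsto x_0$ forces $\tW(z)\le\tW(x_0)$ --- is circular: the lower bound must hold for \emph{every} $x_0$-tree, and the structure of the minimal tree is precisely what the lemma is determining. It also conflates $W$ with $\tW$: \cref{lemma:itel:r_x} gives monotonicity of the welfare $W$ along easy edges, whereas $r(D\to\cdot)=1-\tW(\cdot)$ involves the virtual welfare, and since $F$ is only assumed non-increasing, $W(z)\le W(x_0)$ does not imply $\tW(z)\le\tW(x_0)$.

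The idea you are missing, and the one the paper uses, is an accounting along the in-tree path $D\to y_1\to\dots\to y_m\to x_0$ rather than an exchange of single edges. Any chain-level realization of that path starts with all agents discontent and ends with all agents content at benchmarks $\blu$; hence each agent makes a last discontent-to-content acceptance of some utility $u_i\le\lu_i$, costing $F(u_i)\ge F(\lu_i)$. These discontent-acceptance resistances are of a different nature from the exploration and content-acceptance resistances that account for the outward resistances $r^\star(y_k)$, so the two families of costs add: the path alone has resistance at least $\sum_{k=1}^m r^\star(y_k)+\sum_i F(\lu_i)=\sum_{k=1}^m r^\star(y_k)+r(D\to x_0)$. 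This is how the cost $r(D\to x_0)$ gets paid even when the single edge at $D$ is cheap --- it is recovered further down the path, which is invisible to a one-edge exchange. Bounding the off-path edges by $r^\star$ then yields $r(T)\ge\gamma(D)-r^\star(x_0)+r(D\to x_0)$ for every $x_0$-tree, completing the lower bound.
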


\begin{proof}
Let $x=(\blm,\bla,\blu)\in\cC$.

\medskip\noindent\textbf{(i).}
We want to show that it is possible to create a $D$-tree using only easy edges.
For each $x\in\cC$, choose an easy edge from $x$, going to $D$ if possible, and consider the resulting sub-graph with only the chosen edges.
According to \cref{lemma:itel:r_x}, the chosen easy edge either leads directly to $D$ or leads to a state of strictly better welfare.
We already argued that this choice of edges cannot create a cycle.
Therefore, the sub-graph is acyclic with exactly one outward edge for each $x\in\cC$, hence a $D$-tree. We conclude that $\gamma(D) = \sum_{x\in\cC} r^\star(x)$ as the potential cannot be smaller.

\bigskip\noindent\textbf{(ii).}
It is immediate that in the previous tree, removing the edge leaving $x$ and adding $D\to x$ instead gives a $x$-tree of resistance $\gamma(D) - r^\star(x) + r(D\to x)$.
Let us now show that this $x$-tree is minimal. Consider any $x$-tree $T$ and the path $D\to y_1\to y_2\to \dots\to y_m\to x$ in $T$. Denote $\bu$ the utilities of $x$.
Reusing the proofs of \cref{lemma:itel:r_x,lemma:itel:r_D}, we know that along this path:
\begin{itemize}
    \item As shown in \cref{lemma:itel:r_x}, any path leaving $y_k$ has a resistance lower bounded by $r^\star(y_k)$ due to explorations and acceptations from content agents.
    \item As shown in \cref{lemma:itel:r_D}, each agent $i$ must accept at some point a utility $u_i\le\lu_i$ from a discontent state, for a resistance $F(u_i)\ge F(\lu_i)$.
\end{itemize}
Both points involve resistances of different natures. The resistances due to content agents exploring or accepting do not intersect with the resistances due to discontent agents accepting.
Hence we deduce from these observations that the total resistance of the path is greater than the sum of all the resistance mentioned above, that is $\sum_{k=1}^m r^\star(y_k) + \sum_i F(\lu_i)$.
Lower bounding the resistances of the other edges of $T$ via $r^\star$, we have \[r(T) \ge \sum_{y\in\cC\setminus\{x\}} r^\star(y) + \sum_i F(\lu_i) = \gamma(D) - r^\star(x) + r(D\to x)~.\]
This proves that $\gamma(x) = \gamma(D) - r^\star(x) + r(D\to x)$.
The specific formulas depending on whether $x\in\cE$ or not are derived directly from \cref{lemma:itel:r_x}.
\end{proof}

We can now identify the states minimizing $\gamma$ using the bounds on $F$ and $G$ from \cref{cond:itel}, we recall:
\begin{align*}
    \begin{cases}
        0 \le F \le \frac{F_0}{n},\\
        0 \le G < G_0,\\
        F_0 + G_0 \le 1.
    \end{cases}
\end{align*}
\Cref{cond:itel} directly implies that $1-F_0 \le \tW \le 1$ and $1-G_0 < \tS \le 1$, hence
\begin{align*}
    0 \le 1-\tW < \tS \le 1
\end{align*}
holds for any possible value of $\tS$ and $\tW$.
It follows that if $x\in\cE$ and $y\in\cC\setminus\cE$, according to \cref{lemma:itel:potential},
\begin{align*}
    & \gamma(x)
    = \gamma(D) - 1 - \tW(x)
    < \gamma(D) - 2 + \tS(y)
    \le \gamma(D) - 1 - \tW(y) + \tS(y)
    = \gamma(y)~,\\
    & \gamma(y)
    = \gamma(D) - 1 - \tW(y) + \tS(y)
    < \gamma(D) - 1 + \tS(y)
    \le \gamma(D)~.
\end{align*}
Moreover, $\gamma$ is minimized over $\cE$ at states maximizing $\tW$, and minimized over $\cC\setminus\cE$ at states maximizing $\tW-\tS$.
We deduce that, if $\cE\ne\emptyset$, $\cX^\star$ is the set of equilibrium states maximizing $\tW$.
Else, if $\cE=\emptyset$, $\cX^\star$ is the set of aligned states maximizing $\tW-\tS$.
This concludes the proof of \cref{thm:itel:X_star}.

\section{Proof of IODL convergence}
\label{sec:iodl:proof}

The proof of convergence for \ac{iodl} uses the same reasoning as for \ac{itel}. In this section we highlight the few differences due to the removal of intermediate moods \hopeful{} and \watchful{} which yield \cref{thm:iodl:convergence}.

All of the discussion done in \cref{sec:itel:proof_P_0} also holds for \ac{iodl}. The paths described must be adapted by removing intermediate step, but the reasoning is identical.
In particular, \cref{lemma:itel:path_to_aligned,lemma:itel:path_to_D,prop:itel:recurrence_P_0} hold for the \ac{iodl} process.
Regarding the computation of resistances, \cref{lemma:itel:r_x} is adapted as follows.
\begin{lemma}
    \label{lemma:iodl:r_x}
    \hfill
    \begin{enumerate}[(i)]
       \item If $x\in\cA$, $r^\star(x) = \pinfty$.
       \item If $x\in\cC\setminus\cA$, $r^\star(x) = 1$, and if $x\to D$ is not easy than any easy edge $x\to y\in\cC$ leads to a strictly better welfare: $W(y)>W(x)$.
    \end{enumerate}
\end{lemma}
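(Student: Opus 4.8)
The plan is to mirror the proof of \cref{lemma:itel:r_x} for \ac{itel}, substituting the simplified \ac{iodl} update rules of \cref{tab:iodl:update_policy}: the entire difference stems from the absence of the \hopeful{} and \watchful{} moods, which collapses the cost of leaving an aligned state from $2-\tS(x)$ (or $2$ at an equilibrium) down to $1$. Part (i) is unchanged from \ac{itel}: if $x\in\cA$ every agent is optimized, so by the second line of \cref{tab:iodl:action_policy} no agent ever explores, $\bla$ is played forever, and $x$ is absorbing, giving $r^\star(x)=\pinfty$.

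For part (ii) I would establish $r^\star(x)=1$ by two bounds. For the lower bound, recall from \cref{prop:itel:recurrence_P_0} (which holds for \ac{iodl}) that $\{x\}$ is a recurrence class of $P^0$: from an aligned $x\in\cC$ every content agent plays its benchmark, observes its aligned utility, and reverts. Hence any path leaving $x$ must contain a genuine deviation, and from a content aligned configuration the only positive-resistance event available is an exploration (probability $\epsilon$, resistance $1$); the acceptance resistances $G$ only arise after an exploration, and there are no discontent agents to contribute an $F$-resistance. Since resistances are non-negative, every outward path costs at least $1$. For the upper bound I would exhibit a resistance-$1$ path: as $x\in\cC\setminus\cA$ there is a non-optimal agent $i$, and applying \cref{assum:interdependence} to $J=\{i\}$ yields $a_i\ne\la_i$ and an agent $k\ne i$ with $U_k(a_i,\la_{-i})\ne\lu_k$. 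Let $i$ explore $a_i$ (resistance $1$) while everyone else plays their benchmark. Here is where \ac{iodl} departs from \ac{itel}: agent $k$ plays $a_k=\la_k$ and observes $U_k(a_i,\la_{-i})\ne\lu_k$, so by \cref{tab:iodl:update_policy} it \emph{immediately} accepts (if the change is an improvement) or rejects and becomes discontent (if it is a deterioration), both at zero resistance, so $x$ is left at once. Extending by a zero-resistance path to $\cC\cup\{D\}$ via \cref{lemma:itel:path_to_aligned,lemma:itel:path_to_D} completes an outward edge of resistance $1$, whence $r^\star(x)=1$.

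For the welfare statement I would analyze an arbitrary easy edge $x\to y$ with $y\in\cC$, i.e. a resistance-$1$ path ending in an aligned all-content state; it contains a single exploration by some non-optimal agent $i$, every other transition costing $0$. The decisive dichotomy is: if $i$ reverts, then any agent $k$ that reacted to the transient exploration is either rejected into discontent on the spot, or accepts a benchmark $U_k(a_i,\la_{-i})>\lu_k$ and then, once $i$ returns to $\la_i$, observes $\lu_k$ below its new benchmark and rejects; in both cases a discontent agent appears and the path runs to $D$, not to $\cC$. Thus reaching $y\in\cC$ forces $i$ to permanently adopt $a_i$ by accepting it, which at zero resistance requires $U_i(a_i,\la_{-i})>\lu_i$ and $G(\lu_i,U_i(a_i,\la_{-i}))=0$. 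The benchmark profile of $y$ is then $(a_i,\la_{-i})$, and for no agent to turn discontent every other agent must observe a utility $\ge\lu_k$; hence $i$ strictly improves while nobody decreases, giving $W(y)>W(x)$. The hypothesis that $x\to D$ is not easy merely guarantees that such $\cC$-valued easy edges are the relevant ones and, combined with this strict increase, rules out cycles of easy edges in the later potential computation.

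The main obstacle will be making this welfare analysis airtight against the degenerate transitions that \ac{iodl} permits but \ac{tel} forbids, namely acceptances from a discontent state at zero resistance when $F=0$. I must check that a resistance-$1$ path cannot slip into a non-improving aligned state by routing through a discontent agent that re-accepts a utility with vanishing $F$. This is precisely the phenomenon that already complicated the recurrence of $D$ in \cref{sec:itel:proof_P_0}, and I would dispose of it by observing that a zero-resistance re-acceptance from discontent forces the re-accepted utility into the zero-$F$ region (the top utilities), so the welfare comparison is preserved, whereas any genuinely lossy re-alignment pays a positive $F$-resistance and is therefore excluded from an easy edge.
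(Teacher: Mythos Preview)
Your argument for $r^\star(x)=\pinfty$ in case (i) and for the two bounds giving $r^\star(x)=1$ in case (ii) is correct and matches the paper. The paper organizes the upper bound slightly differently: it case-splits on whether the explorer $j$ can revert at zero resistance (i.e.\ on whether $G(\lu_j,u_j)>0$), building the path to $D$ when $j$ reverts and invoking the reasoning of \cref{lemma:itel:r_x}(iii) when $j$ is forced to accept. Your version, which notes that the affected agent $k$ changes state immediately, is equally valid and arguably cleaner.

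Where your proposal diverges is the welfare clause. The paper does not attempt to analyze an \emph{arbitrary} easy edge; it only exhibits one specific resistance-$1$ path and shows that it terminates in $D$ or in some $y\in\cC$ with $W(y)>W(x)$, which is all that the later $D$-tree construction in \cref{lemma:iodl:potential} actually needs. Your more ambitious analysis of all easy edges is a reasonable goal, but the resolution you sketch in the final paragraph does not work: knowing that a discontent agent re-accepts a utility with $F=0$ only places the new benchmark in the zero-$F$ region, which need not dominate the agent's original benchmark $\lu_k$, so the welfare comparison is not preserved by that observation alone. The clean fix uses the hypothesis directly: if \emph{any} agent becomes discontent at some point along a resistance-$1$ path from $x$, then \cref{lemma:itel:path_to_D} extends that prefix to $D$ at zero cost, giving $r(x\to D)\le 1$ and contradicting ``$x\to D$ is not easy''. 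Hence no agent ever becomes discontent along such a path, every benchmark utility is non-decreasing, the single explorer must accept (else the path returns to $x$), and $W(y)>W(x)$ follows without ever touching the $F=0$ issue.
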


\begin{proof}
When $x\in\cA$, the reasoning is the same as in \cref{lemma:itel:r_x}.
When $x\in\cC\setminus\cA$, leaving $x$ requires some exploration hence $r^\star(x)\ge1$.
Now let us describe a path leaving $x$ with resistance equal to $1$.
By interdependence assumption \cref{assum:interdependence}, one can find a non-optimized agent $j$ that is able to explore in a way that affects other agents.
First, let us assume that $j$ is able to do so in a way so that $G(\lu_j,u_j)>0$, so that it can reverts with no resistance.
Let $j$ explore once than playing its benchmark action. According to \cref{tab:iodl:policies} and with the same arguments that were used in \cref{lemma:itel:r_x} (ii), the following path may happen with resistance equal to $1$ due to a single observation from $j$:
\begin{itemize}
    \item $j$ reverts once then remains in its content state.
    \item Content agents with $u<\lu$ become discontent. They remain discontent after the second step.
    \item Content agents with $u>\lu$ accept $u$ as their new benchmark. They then become discontent, as their new benchmark is $u$ but they observe $\lu$.
    \item Content agents with $u=\lu$ remain in their content state.
\end{itemize}
Along this path some agents become discontent. We conclude with \cref{lemma:itel:path_to_D} that with no additional resistance the path can be extended to $D$.
Therefore, $r(x\to D) = 1 = r^\star(x)$.
Now, in the case where $G(\lu_j,u_j)=0$, $j$ accepts its exploration with no resistance. Then we conclude with the same reasoning as in \cref{lemma:itel:r_x} (iii) that there the path is extended with no additional resistance to $D$ or to another state $y\in\cC$ with $W(y)>W(x)$.
\end{proof}

\Cref{lemma:itel:r_D} also holds for \ac{iodl} with same reasoning, we recall:
\begin{lemma}
    \label{lemma:iodl:r_D}
    For all $x\in\cC$, $r(D\to x) = 1-\tW(x)$.
\end{lemma}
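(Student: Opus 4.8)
The plan is to mirror the proof of \cref{lemma:itel:r_D} almost verbatim, since that proof only uses features that \ac{iodl} shares with \ac{itel}. Concretely, I would first record two structural facts read off from \cref{tab:iodl:policies}. First, a discontent agent that accepts an observed utility $u$ does so with resistance exactly $F(u)$: when $F(u)>0$ the acceptance probability is $\epsilon^{F(u)}$, and when $F(u)=0$ it is the constant $c_F$, whose resistance is $0=F(u)$; rejecting always carries resistance $0$. Second, a content agent can raise its benchmark utility only through an acceptance — either by exploring $a\ne\la$ with $u>\lu$, or by confirming $a=\la$ with $u>\lu$ — and every such transition strictly increases the benchmark, so a content agent can never lower its benchmark without first becoming discontent. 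These are precisely the two properties invoked in the \ac{itel} argument.

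For the lower bound I would fix $x=(\blm,\bla,\blu)\in\cC$ and an arbitrary path $D\leadsto x$. Because the path starts at $D$, every agent $i$ begins discontent and ends content with benchmark $\lu_i$, so it must undergo at least one discontent-to-content transition; take the last such transition of $i$, in which it accepts some utility $u_i$. After this transition $i$ remains content, so by the second fact its benchmark only increases thereafter, forcing $u_i\le\lu_i$ and hence an acceptance resistance $F(u_i)\ge F(\lu_i)$ by monotonicity of $F$. Since the resistance of a path is additive over its edges and, within an edge, additive over agents (the joint transition factors as a product of the agents' independent decisions), these per-agent contributions all appear in the path's total resistance, yielding $r(D\leadsto x)\ge\sum_i F(\lu_i)=1-\tW(x)$.

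For the matching upper bound I would exhibit a single one-step transition: from $D$, let all agents simultaneously explore the profile $\bla$ — which each discontent agent does with positive probability — and then accept the resulting aligned outcome $\blu$. This lands exactly in $x$ and, by the first fact, has resistance $\sum_i F(\lu_i)$. Combining the two bounds gives $r(D\to x)=1-\tW(x)$.

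I do not expect a genuine obstacle: dropping the \hopeful{} and \watchful{} moods only shortens the intermediate bookkeeping in the paths and leaves both structural facts untouched. The one point worth double-checking is the first fact in the $F(u)=0$ regime, where \ac{iodl}'s discontent update is written as a separate line with constant probability $c_F$ rather than the clipped $\epsilon^{F(u)}\wedge c_F$ of \ac{itel}; one must simply confirm this still contributes resistance $0$, which it does.
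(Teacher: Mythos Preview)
Your proposal is correct and follows exactly the paper's approach: the paper simply states that \cref{lemma:itel:r_D} holds for \ac{iodl} with the same reasoning, and your argument is precisely that reasoning, with the small extra care of verifying the $F(u)=0$ branch of the discontent update still has resistance $0$.
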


One can see that \cref{prop:itel:recurrence_P_eps} holds in the case of \ac{iodl} for the same reasons as for \ac{itel}. The case where $\cA\ne\emptyset$ in \cref{thm:iodl:convergence} follows as for \ac{itel}.
When $\cA=\emptyset$, the same reasoning as in \cref{lemma:itel:potential} yields the following.
\begin{lemma}
    \label{lemma:iodl:potential}
    \hfill
    \begin{enumerate}[(i)]
        \item $\gamma(D) = \sum_{x\in\cC} r^\star(x)$.
        \item If $x\in\cC$, $\gamma(x) = \gamma(D) - r^\star(x) + r(D\to x) = \gamma(D) - \tW(x)$.
    \end{enumerate}
\end{lemma}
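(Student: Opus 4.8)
The plan is to mirror the proof of \cref{lemma:itel:potential}, exploiting the simplification that in \ac{iodl} the outward resistance is uniformly $r^\star(x)=1$ on $\cC\setminus\cA=\cC$ (we are in the case $\cA=\emptyset$), as established in \cref{lemma:iodl:r_x}, together with $r(D\to x)=1-\tW(x)$ from \cref{lemma:iodl:r_D}. Both parts are then obtained by the tree-surgery arguments of the \ac{itel} case, with the welfare-monotonicity of easy edges supplying the acyclicity we need.

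For part (i), I would first build a spanning $D$-tree using only easy edges: for each $x\in\cC$ select one easy outward edge, preferring $x\to D$ whenever it is easy. By \cref{lemma:iodl:r_x}, any chosen easy edge that does not reach $D$ lands in some $y\in\cC$ with $W(y)>W(x)$. Since welfare strictly increases along every non-$D$ edge, the selected edges cannot close a cycle, so the resulting sub-graph is acyclic with exactly one outgoing edge per vertex $x\ne D$, hence a $D$-tree, of total resistance $\sum_{x\in\cC}r^\star(x)$. Conversely, every $D$-tree carries exactly one outgoing edge from each $x\in\cC$, of resistance at least $r^\star(x)$, so no $D$-tree can do better. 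This yields $\gamma(D)=\sum_{x\in\cC}r^\star(x)$.

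For part (ii), I would obtain an $x$-tree from the minimal $D$-tree above by deleting the outgoing edge at $x$ and inserting $D\to x$, producing resistance $\gamma(D)-r^\star(x)+r(D\to x)$; it then remains to prove minimality. Given any $x$-tree $T$, consider its unique path $D\to y_1\to\cdots\to y_m\to x$. Two disjoint families of resistance accumulate along it: each intermediate $y_k$ contributes at least its outward resistance $r^\star(y_k)$, incurred by a content agent exploring to leave its aligned state, while, exactly as in \cref{lemma:iodl:r_D}, each agent $i$ must at some point pass from discontent to content by accepting a utility $u_i\le\lu_i$, costing $F(u_i)\ge F(\lu_i)$. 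As these two families arise from transitions of different types they never overlap, so the path resistance is at least $\sum_{k=1}^m r^\star(y_k)+\sum_i F(\lu_i)$; bounding the remaining edges of $T$ below by their $r^\star$ gives $r(T)\ge\gamma(D)-r^\star(x)+r(D\to x)$. Substituting $r^\star(x)=1$ and $r(D\to x)=1-\tW(x)$ collapses the formula to $\gamma(x)=\gamma(D)-\tW(x)$.

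The main obstacle is the minimality argument in (ii), specifically justifying that the exploration resistances of content agents and the discontent-to-content acceptance resistances are genuinely disjoint and may therefore be summed. In \ac{iodl} this is in fact easier than in \ac{itel}: the removal of the intermediate \hopeful{} and \watchful{} moods makes each agent's transitions direct, so the bookkeeping of which transition supplies which resistance is more transparent, and the paths already constructed in \cref{lemma:iodl:r_x} exhibit the required structure explicitly.
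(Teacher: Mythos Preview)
Your proposal is correct and follows essentially the same approach as the paper, which simply states that the same reasoning as in \cref{lemma:itel:potential} applies and then carries out exactly the tree-surgery and disjoint-resistance arguments you describe. Your explicit remark that the removal of intermediate moods in \ac{iodl} makes the disjointness of the two resistance families more transparent is accurate but not needed, since the paper's argument for \ac{itel} already covers the harder case.
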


Now recall that we assumed that $F<1/n$, hence $\tW(x) = 1 - \sum_iF(\lu_i) \in (0,1]$ for any state $x\in\cC$ with benchmark utilities $\blu$.
From there, it is immediate with \cref{lemma:iodl:potential} that $\gamma$ is minimized at states $x\in\cC$ maximizing $\tW(x)$, and applying \cref{thm:sss_min_gamma} concludes \cref{thm:iodl:convergence}.

\section{Proof of RITEL convergence}
\label{sec:ritel:proof}

In this section we prove all results stated in \cref{sec:ritel:results}, that is \cref{prop:ritel:recurrence_P_0,prop:ritel:recurrence_P_eps,thm:ritel:X_star,thm:ritel:convergence}.
Let $P^\varepsilon$ be the Markov process defined by the \ac{ritel} algorithm for any $\varepsilon\in[0,1)$.
As for \ac{itel}, the first step of the proof is to identify the recurrence classes of the unperturbed process $P^0$. The second step is to compute resistances and then potentials in order to describe $\cX^\star$.
Contrarily to \ac{itel}, resistances are estimated with a small margin of error, so that we cannot identify exactly which states minimize $\gamma$, as stated in \cref{thm:ritel:convergence}.
The proof detailed in this section is very similar to \cref{sec:itel:proof}. In fact, most adaptations have already been discussed in \cref{sec:ritel:results} and it only remains to adapt the proofs from \ac{itel}.

\subsection{Recurrence Classes of the Unperturbed Process}
\label{sec:ritel:proof_P_0}

The goal of this first section is to study paths of zero resistance, \ie, paths in the unperturbed process.
In particular, we show that in $P^0$ any state can reach a state in $\cC_\delta\cup\{D\}$, and that each state $x\in\cC_\delta$ is absorbing.
From there we deduce \cref{prop:ritel:recurrence_P_0}, that is the recurrence classes of $P^0$ are the singletons $\{x\}\subset\cC_\delta$ and possibly the communication class of $D$.
Apart from $\cC$ being replaced with $\cC_\delta$, this section is proven exactly as in \cref{sec:itel:proof_P_0} and \cref{lemma:ritel:path_to_aligned,lemma:ritel:path_to_D} hold the same statements as \cref{lemma:itel:path_to_aligned,lemma:itel:path_to_D}. We prove them by referring to the \ac{itel} case, highlighting the few required adaptations. Recall that the policies of \ac{ritel} are given in \cref{tab:ritel:policies}.

\begin{lemma}
    \label{lemma:ritel:path_to_aligned}
    In the unperturbed process $P^0$ there is a path from any state to a state where all agents are either content with weakly aligned benchmark or discontent, and so that discontent agents remain so along the path.
\end{lemma}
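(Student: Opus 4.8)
The plan is to reproduce the proof of \cref{lemma:itel:path_to_aligned} with two substitutions: true utilities are replaced by mean bins, and alignment by weak alignment. Recall that in $P^0$ an agent playing its benchmark action profile $\bla$ observes its mean bin $\nu_i \eqdef N_i(\bla)$ deterministically, except in the single boundary configuration where its mean sits on the lower edge of $\nu_i$ with non-deterministic utility, in which case the observation is $\nu_i$ or $\nu_i-\delta$ with probability $\tfrac12$ each. First I would fix any state $x=(\blm,\bla,\blv)$, extend $\bla$ to the discontent agents arbitrarily, and let every agent play $\bla$ for two steps; as in \ac{itel} this path exists in $P^0$ since non-discontent agents play their benchmark deterministically and discontent agents play $\bla$ with positive probability.

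Next I would read off the two-step mood evolution of each non-discontent agent from \cref{tab:ritel:update_policy}, according to how its observed bin $\nu_i$ compares with its benchmark $\lv_i$. When $\nu_i=\lv_i\pm\delta$ the agent reverts, ending content and weakly aligned. When $\nu_i\ge\lv_i+2\delta$ it passes through the hopeful mood and accepts the new bin $\nu_i$, ending content with benchmark $\nu_i$, hence strongly --- and thus weakly --- aligned. When $\nu_i\le\lv_i-2\delta$ it passes through the watchful mood and rejects, ending discontent. Discontent agents reject at every step with probability at least $1-c_F>0$ and stay discontent. Each branch has positive probability, so the favourable joint realisation over the two steps has positive probability; at a boundary I would simply select the favourable side, e.g.\ observing $\nu_i$ twice in the acceptance regime.

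The one genuinely new point, which I expect to be the main obstacle, is the configuration that \cref{def:weak&strong_aligned} deliberately excludes from weak alignment: a non-deterministic agent with mean exactly $(\lv_i-\delta)^-$, for which $\nu_i=\lv_i-\delta=\lv_i\pm\delta$ and yet the state is not weakly aligned. Here the revert branch would leave the agent content but outside $\cC_\delta$, which is not an allowed terminal mood. I would resolve this by choosing the branch in which the agent observes $\lv_i-2\delta$ twice (probability $\tfrac14>0$), driving it from content through watchful to discontent; this is precisely the reason the definition excludes this configuration. Collecting the branches, after two steps every agent is content with weakly aligned benchmark or discontent, with discontent agents --- old and newly created --- remaining so, which is the claimed path, and the remaining bookkeeping mirrors \ac{itel}.
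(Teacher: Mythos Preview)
Your proposal is correct and follows essentially the same approach as the paper: run the two-step path from \cref{lemma:itel:path_to_aligned} with bins in place of utilities, have every agent play $\bla$ and observe its mean bin, and handle the excluded weak-alignment configuration $\mu_i=(\lv_i-\delta)^-$ by selecting the observation $\lv_i-2\delta$ so that the agent is driven to discontent. The paper's proof is terser---it simply cites the \ac{itel} path and then treats the boundary case in one sentence---but the content and the resolution of the edge case are the same as yours.
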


\begin{proof}
Let $x=(\blm,\bla,\blv)\in\cX$ be any state and extend $\bla$ with arbitrary actions for discontent agents.
Denote $\bv$ the mean bins resulting from $\bla$.
We have shown in \cref{corol:almost_regular_deviation} that the resistance of observing the mean bin is zero. Hence the event where all agents play according to $\bla$ and observe $\bv$ has positive probability to happen in $P^0$.
Then, the path described \cref{lemma:itel:path_to_aligned}---replacing comparisons $u=\lu$ (resp. $u<\lu$ and $u>\lu$) by $v=\lv\pm\delta$ (resp. $v\le\lv-2\delta$ and $v\ge\lv+2\delta$)--- may happen here with positive probability. This path leads to a state where agents are either discontent or content. Moreover, content agents either keep their benchmark if it is already weakly aligned with $\bla$, or accept $v$. Either way content agents all end up weakly-aligned with $\bla$ in this new state, which concludes the proof.

Regarding the special case where the mean utility lies exactly at the edge between bins $v-\delta$ and $v$ and the utility distribution is not deterministic, recall that both bins are observed with probability $\frac12$ in $P^0$ and a benchmark $\lv=v+\delta$ is not considered weakly aligned in this case according to \cref{def:weak&strong_aligned}. We assume that agents in such situation observe $v-\delta$ instead of $v$, so that they act as if they observed a lower utility and eventually become discontent.
\end{proof}

\begin{lemma}
    \label{lemma:ritel:path_to_D}
    In the unperturbed process $P^0$ there is a path to $D$ from any state featuring at least one discontent agent.
\end{lemma}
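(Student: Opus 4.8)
The plan is to mirror the proof of \cref{lemma:itel:path_to_D} almost verbatim, the one structural change being the use of the $3\delta$-interdependence \cref{assum:3dinterdependence} in place of the plain interdependence \cref{assum:interdependence}. As in the deterministic case, I would show that from any state carrying at least one discontent agent there is a zero-resistance path to a state carrying \emph{strictly more} discontent agents; iterating this finitely often drives the whole population to $D$. The first move is to invoke \cref{lemma:ritel:path_to_aligned} to reduce to a state in which every agent is either discontent or content with a benchmark weakly aligned with $\bla$, the discontent agents staying discontent along the way. I would then let $J$ be the (nonempty, proper) set of discontent agents, the goal being to force one further agent out of its content mood.

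The core construction follows the explore-then-revert pattern of \cref{lemma:itel:path_to_D}. Using $3\delta$-interdependence on the subset $J$, I would pick a content agent $i\notin J$ and actions $a'_J\ne\la_J$ with $|M_i(a'_J,\la_{-J})-M_i(\bla)|\ge3\delta$. The agents of $J$ then explore $a'_J$ for two periods and revert to $\la_J$ for two more, while every content agent keeps playing its benchmark. By \cref{corol:almost_regular_deviation} each observation landing in the corresponding mean bin has zero resistance, and each discontent agent may reject and remain discontent with positive probability (the reject branch survives in $P^0$ whether or not $F=0$), so the whole path lives in $P^0$. It then remains to read off the behaviour of $i$ from \cref{tab:ritel:update_policy} exactly as the four content-agent cases were read off in \cref{lemma:itel:path_to_D}, with the comparisons between $u$ and $\lu$ replaced by the quantized comparisons $v\ge\lv+2\delta$ and $v\le\lv-2\delta$.

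The one genuinely new ingredient — and the step I expect to require the most care — is converting the $3\delta$ shift of the \emph{mean} into a bin shift large enough to trip a non-revert transition despite RITEL's one-bin tolerance. Writing $\nu_i=N_i(\bla)$ and $\nu'_i=N_i(a'_J,\la_{-J})$, weak alignment gives $|\nu_i-\lv_i|\le\delta$, and a short edge computation shows that a mean gap of at least $3\delta$ forces $|\nu'_i-\nu_i|\ge3\delta$, hence $\nu'_i\ge\lv_i+2\delta$ or $\nu'_i\le\lv_i-2\delta$ — precisely the thresholds at which a benchmark-playing content agent turns watchful or hopeful. In the decreasing case $i$ becomes watchful and, seeing the same low bin again, rejects into discontent; in the increasing case $i$ turns hopeful, accepts $\nu'_i$ as a new benchmark, and then on reverting observes $\nu_i\le\nu'_i-3\delta$, so it turns watchful and rejects. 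Either way $i$ ends discontent while $J$ stays discontent, yielding the required strict increase, and \cref{lemma:ritel:path_to_aligned} extends the remainder of the path to $D$. The only subtlety is the boundary situation flagged in \cref{def:weak&strong_aligned}: if $M_i(a'_J,\la_{-J})$ sits exactly on a bin edge and $U_i$ is non-degenerate, both adjacent bins are observed with positive probability in $P^0$, so I would simply select a favourable realization, which keeps the path at zero resistance.
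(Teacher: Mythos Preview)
Your proposal is correct and follows essentially the same route as the paper: reduce via \cref{lemma:ritel:path_to_aligned}, replay the four-period explore-then-revert path of \cref{lemma:itel:path_to_D} with bin-quantized comparisons, and use $3\delta$-interdependence to guarantee the perturbed mean bin lands at least two bins away from the benchmark. Your bin computation routes through $\nu_i=N_i(\bla)$ and the triangle inequality whereas the paper bounds $\nu'_i$ directly from $M_i(\bla)\in[\lv_i^--\delta,\lv_i^-+2\delta)$, but the two arguments are equivalent; your handling of the boundary edge case is also fine and matches the spirit of the remark in \cref{lemma:ritel:path_to_aligned}.
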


\begin{proof}
Let $x=(\blm,\bla,\blv)\in\cX$ be a state with at least one discontent agent.
By first applying \cref{lemma:ritel:path_to_aligned}, we can assume that $x$ only features content or discontent agents.
Let $J$ be the set of discontent agents and $a_J\ne\la_J$ be any other actions for the discontent agents. Denote $\bnu$ the mean bins of the utilities resulting from $(a_J,\la_{-J})$.
Then the path described in \cref{lemma:itel:path_to_D}---replacing comparisons $u=\lu$ (resp. $u<\lu$ and $u>\lu$) by $\nu=\lv\pm\delta$ (resp. $\nu\le\lv-2\delta$ and $\nu\ge\lv+2\delta$)---may happen here with positive probability and leads to another state with more discontent agents, assuming that $a_J$ is chosen so that some non discontent agent $i$ observes $\nu_i\ne\lv_i\pm\delta$. 

It remains to show that the stronger interdependence assumption \cref{assum:3dinterdependence} enables such action profile.
Indeed, according to \cref{assum:3dinterdependence}, there exist $a_J$ and $i\notin J$ such that $M_i(a_J,\la_{-J}) \le M_i(\bla)-3\delta$ or $M_i(a_J,\la_{-J}) \ge M_i(\bla)+3\delta$. In the first case, $\nu_i^- \le M_i(a_J,\la_{-J}) \le M_i(\bla)-3\delta < \lv_i^--\delta$ as weak alignment implies $M_i(\bla)\in[\lv_i^--\delta,\lv_i^-+2\delta)$, hence $\nu_i < \lv_i-\delta$, \ie, $\nu_i \le \lv_i-2\delta$. Similarly, $\nu_i \ge \lv_i+2\delta$ in the second case.
We conclude that the path described above may happen with positive probability in $P^0$. Iterating until all agents are discontent concludes the proof.
\end{proof}

As for \ac{itel}, \cref{lemma:ritel:path_to_aligned,lemma:ritel:path_to_D} imply that in $P^0$, there is a path from any state to either $D$ or a state in $\cC_\delta$.
In particular, the recurrence classes of $P^0$ are included in the communication classes of $\cC_\delta$ and $D$.
In a weakly aligned state, all agents are content hence keep playing according to their benchmark in $P^0$. Moreover, being weakly aligned, the observed utilities always fall within $\delta$ of their benchmark bin in $P^0$. Therefore, all agents remain in their state and the state is absorbing, so that each $\{x\}\subset\cC_\delta$ is a recurrence class of $P^0$.
This proves \cref{prop:ritel:recurrence_P_0}.

Regarding whether or not $D$ is recurrent, the same discussion as in \ac{itel} holds: the communication class of $D$ is not necessarily reduced to $D$ itself, and may not be recurrent.
Either way from now on we consider the resistance graph $\cG$ over the class of $D$ and classes $\{x\}\subset\cC_\delta$. We remove the brackets and refer to them as $D$ and $x\in\cC_\delta$ to ease notations. Note that if $D$ is not recurrent then its outward resistance is $r^\star(D)=0$ and it follows immediately that it cannot minimize $\gamma$.

\subsection{Resistances and Potentials}
\label{sec:ritel:proof_res}

Now that we have identified the recurrence classes of $P^0$, we compute the resistance between these classes in the perturbed process $P^\varepsilon$ and then their potential.
From now on we reason over the graph of the classes $x\in\cC_\delta$ and $D$, where the resistance of any edge $x\to y$ is the lowest resistance of a path $x\leadsto y$ in $P^\varepsilon$.
Recall that subsets $\cC_\delta$, $\cC$, $\cE_\delta$, $\cE$, $\cA_\delta$ and $\cA$ were defined in \cref{sec:ritel:results}.
Definitions of easy edges, rooted trees, and potentials were given in \cref{def:resistance,def:potential}.
Definitions of virtual welfare and stability of a state were given in \cref{def:ritel:tW&tS_state}.

As for \ac{itel}, it is enough to study the resistances of edges $D\to x$ along with the easy edges of states $x\in\cC_\delta$.
Before computing the outward resistances from states $x\in\cC_\delta$, we discuss the nature of an easy edge leaving $x$.
As in the case of \ac{itel}, it is required in order to leave $x$ that either an agent observes a non-aligned utility twice, or that an agent accepts its own exploration.
However, to observe a non-aligned utility, an agent can be influenced by another agent as in \ac{itel}, or can make an offset observation due to random utilities.
The following lemma discusses the resistance of each three kinds of paths depending on the nature of the starting state $x$. An easy edge leaving $x$ then corresponds to the path with minimal resistance among the ones described.
\begin{lemma}
    \label{lemma:ritel:easy_edge}
    \hfill
    \begin{enumerate}[(i)]
        \item If $x\in\cC_\delta$ and $\tR(x) < \pinfty$, there exists a path from $x$ to $D$ where an agent makes an offset observation twice, with resistance equal to  $2\tR(x)$. If $\tR(x) = \pinfty$ instead the path does not exist.
        \item If $x\in\cE_\delta\setminus\cA_\delta$, there exists a path from $x$ to $D$ where an agent explores twice and influences another agent into changing state, with resistance equal to $2$.
        If $x\in\cA_\delta$, no such path exists.
        \item If $x\in\cC_\delta$, there might exist a path from $x$ to another state where an agent explores and accepts, which leads either to $D$ or to a state $y\in\cC_\delta$ satisfying $W(y)>W(x)$.
        If $x\in\cC_\delta\setminus\cE_\delta$, the minimal resistance among such paths is at most $2-\tS_-(x)$.
        
        If $x\in\cC$, such paths may or may not exist, we lower bound their resistance.
        If $x\in\cC\setminus\cE$, the minimal resistance among such paths is at least $2-\tS_+(x)$.
        If $x\in\cE\setminus\cA_\delta$, the minimal resistance among such paths is at least $2$.
        If $x\in\cA_\delta$, no such path exists.
    \end{enumerate}
\end{lemma}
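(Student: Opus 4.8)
The plan is to mirror the proof of \cref{lemma:itel:r_x} for \ac{itel}, treating the three escape mechanisms separately and inserting the two features specific to \ac{ritel}: decisions are now taken on bins with a one-bin tolerance, and an agent may leave its benchmark not only through its own exploration or through another agent's exploration, but also through a pure \emph{offset observation} whose resistance is governed by $\tR$. Throughout, the resistance of a behaviour splits into up to three independent contributions — the cost $1$ of an exploration, the observation resistance $r_{U_i(\cdot)}(v)$ of landing in a bin $v$ given by \cref{corol:almost_regular_deviation}, and the acceptance resistance $G(\lv_i,v)$ — and the key quantitative inputs are that observing a bin offset from the mean bin costs at least $R_0$ (\cref{lemma:ritel:offset_observation}) and that $R_0\ge1$ (\cref{cond:R_0}).

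For (ii) I would copy the construction of \cref{lemma:itel:r_x}(ii). Since $x\in\cE_\delta\setminus\cA_\delta$, there is a non-optimized agent $j$ that is at a $\delta$-equilibrium; let it play some $a_j\ne\la_j$ twice and then revert. Being at a $\delta$-equilibrium, $j$ observes $N_j(a_j,\la_{-j})\le\lv_j+\delta$, so it cannot accept and reverts at no extra cost, while \cref{assum:3dinterdependence} lets me choose $a_j$ so that some other agent $i$ has $N_i(a_j,\la_{-i})\ne\lv_i\pm\delta$ — exactly the computation already carried out in the proof of \cref{lemma:ritel:path_to_D}. Playing its benchmark, $i$ then reads this offset mean bin twice at cost $0$ and is driven through \hopeful{}/\watchful{} to discontent, after which \cref{lemma:ritel:path_to_D} reaches $D$ for free; the two explorations of $j$ give resistance exactly $2$. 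When $x\in\cA_\delta$ every agent has benchmark $\{1\}$ and never explores, so no such path exists.

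For (iii) the upper bound for $x\in\cC_\delta\setminus\cE_\delta$ is the analogue of \cref{lemma:itel:r_x}(iii): an agent with $N_i(a_i,\la_{-i})\ge\lv_i+2\delta$ explores, observes its own mean bin at no observation cost and accepts with resistance $G(\lv_i,N_i(a_i,\la_{-i}))$, for a total of $1+(1-\tS_-(x))=2-\tS_-(x)$ after minimizing over $i,a_i$; the run then reaches either an aligned state of strictly larger welfare or $D$ via \cref{lemma:ritel:path_to_D}. The lower bounds for strongly aligned $x$ are where quantization bites. To accept, an agent must land in a bin $v\ge\lv_i+2\delta$; for $x\in\cE\setminus\cA_\delta$, where $N_i(a_i,\la_{-i})\le\lv_i$, this forces an offset observation of resistance $\ge R_0\ge1$ on top of the exploration, hence $\ge2$. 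For $x\in\cC\setminus\cE$ the monotonicity of $G$ together with the observation cost of overshooting the mean bin by one slice yields $2-\tS_+(x)$; this is precisely what the $N_i(a_i,\la_{-i})+\delta$ appearing inside $G$ in the definition of $\tS_+$ is designed to absorb, and verifying $r_{U_i}(v)+G(\lv_i,v)\ge G(\lv_i,N_i(a_i,\la_{-i})+\delta)$ case by case (mean bin observed versus one bin higher) is the crux of this step.

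Part (i) is the genuinely new mechanism. I would select the agent $i$ and offset bin $v^\star\ne\lv_i\pm\delta$ realizing the minimum in $\tR(x)$, so that each observation of $v^\star$ under $\bla$ costs $r_{U_i(\bla)}(v^\star)=\tR(x)$; having $i$ observe $v^\star$ twice while all other agents stay aligned drives it, in the deterioration case $v^\star\le\lv_i-2\delta$, from \content{} to \watchful{} to a reject, producing a discontent agent from which \cref{lemma:ritel:path_to_D} reaches $D$ at total resistance $2\tR(x)$. When $\tR(x)=\pinfty$ no offset bin is observable with finite resistance, so the path does not exist. The main obstacle is bookkeeping rather than a new idea: one must check at each step that a one-bin fluctuation is read as \emph{no change} while a two-bin fluctuation triggers the intended mood transition, and that the three resistance contributions never overlap so that they genuinely add. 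The subtle point is reconciling the direction of the cheapest offset with the claimed destination — an \emph{improvement} offset would instead drive $i$ to accept and hence to a weakly aligned state of higher welfare — so care is needed to single out the deterioration route that yields $D$, and to record that for strongly aligned states \cref{lemma:ritel:noise_stability} gives $\tR(x)\ge R_0\ge1$, ensuring the offset route is never cheaper than the exploration routes of (ii)–(iii), exactly as needed for the subsequent potential computation.
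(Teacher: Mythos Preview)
Your treatment of (ii) and (iii) is essentially the paper's: the constructions from \cref{lemma:itel:r_x} carry over with bins in place of utilities and \cref{assum:3dinterdependence} in place of \cref{assum:interdependence}, and your lower-bound sketch for (iii) is the right case analysis (the paper splits into three sub-cases $v_i=\nu_i$, $v_i=\nu_i+\delta$, $v_i\ge\nu_i+2\delta$, but the third reduces at once to resistance $\ge 1+R_0\ge 2$ by \cref{cond:R_0,cond:itel}, so your two-case description misses nothing essential).

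The gap is in (i). You cannot ``single out the deterioration route'': the minimum defining $\tR(x)$ may be realized only at an improvement bin $v^\star\ge\lv_i+2\delta$, so that route must be handled. Moreover your claim that the improvement route lands in a weakly aligned state of higher welfare is wrong in general. After two observations of $v^\star$ the agent goes \hopeful{} then accepts; its new benchmark is $v^\star$, but the action profile is still $\bla$, so its mean bin is still $\nu_i$. Since $x\in\cC_\delta$ gives $\nu_i\in\{\lv_i-\delta,\lv_i,\lv_i+\delta\}$ while $v^\star\ge\lv_i+2\delta$, typically $\nu_i\le v^\star-2\delta$ and the agent is now \emph{not} aligned. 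The paper's resolution is therefore not to avoid the improvement route but to follow it further: at zero additional cost the agent next observes its mean bin, which it now reads as a deterioration, becomes \watchful{}, then discontent, and \cref{lemma:ritel:path_to_D} finishes the path to $D$. Both directions of offset thus reach $D$ with total resistance $2\tR(x)$.
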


\begin{proof}
    Let $x=(\blm,\bla,\blv)\in\cC_\delta$.
    
    \medskip\noindent\textbf{(i).}
    If no agent explores, the minimal resistance for an agent to make a non-aligned observation is exactly $\tR(x)$ by \cref{def:ritel:noise_stability}.
    Assume that such observation happens twice in a row while other agents observe their mean bin, which is within $\delta$ of their benchmark bin by weak alignment. The associated resistance is $2\tR(x)$ and the agent making offset observations becomes either watchful then discontent if the observation was lower than its benchmark, or becomes hopeful then accepts the offset observation as its new benchmark if it was higher.
    In the second case, the agent is no longer aligned, hence with no resistance it observes its mean bin which is now perceived as a deterioration. It then becomes watchful, then discontent. Either way the agent eventually becomes discontent and the path can be extended from there to $D$ with no additional resistance due to \cref{lemma:ritel:path_to_D}.
    Note that if $\tR(x)=\pinfty$, the path actually does not exist as \cref{corol:almost_regular_deviation} points out that $r_U(v) = \pinfty$ implies $\PP(U^{(\tau)}\in v)=0$.
    
    \medskip\noindent\textbf{(ii).}
    If $x\in\cE_\delta\setminus\cA_\delta$, the path described in \cref{lemma:itel:r_x} (ii) can be used again and leads to $D$, with the same adaptations as in \cref{lemma:ritel:path_to_aligned,lemma:ritel:path_to_D}. Its resistance is equal to $2$, corresponding to both explorations.
    If $x\in\cA_\delta$, all agents are optimized so no exploration may happen to influence other agents and the path does not exist.
    
    \medskip\noindent\textbf{(iii).}
    Let $i$ be an agent that is not optimized in $x$, $a_i$ an action and $v_i \ge \lv_i+2\delta$ a utility bin such that $r_{U_i(a_i,\la_{-i})}(v_i) < \pinfty$. Then it is possible for $i$ to explore $a_i$ while other agents play according to their benchmark action, to observe $v_i$ and to accept it. This series of events may happen with resistance
    \[1 + r_{U_i(a_i,\la_{-i})}(v_i) + G(\lv_i,v_i)~.\]
    Agent $i$ may play $a_i$ a second time with no resistance as it is its new benchmark action. Meanwhile, any other agent $j$ whose benchmark bin $\lv_j$ is not aligned with its new mean bin $\nu_j = N_j(a_i,\la_{-i})$ may observe a bin $v_j\ne\lv_j\pm\delta$ with no resistance.
    In general this bin would be $v_j = \nu_j$, the only exception being the case of a non deterministic utility with $M_j(a_i,\la_{-i}) = \nu_j^- = \lv_j^--\delta$, in which case $\nu_j=\lv_j\pm\delta$ but the agent may observe $v_j = \nu_j-\delta = \lv_j-2\delta$ with no resistance.
    If all agents observe an improvement, they become hopeful then accept their observation and the new state is a state $y\in\cC_\delta$ with strictly higher welfare.
    Else, some agent becomes discontent and the path can be extended from there to $D$ with no additional resistance due to \cref{lemma:ritel:path_to_D}.
    
    Let us now discuss more precisely when such path may be possible and its resistance. If $x\in\cA_\delta$, the path does not happen as all agents are optimized in $x$ hence do not explore.
    
    If $x\in\cC_\delta\setminus\cE_\delta$, there exists an agent $i$ that is not at a $\delta$-equilibrium in $x$. Then the path is possible with $v_i = N_i(a_i,\la_{-i}) \ge \lv_i+2\delta$, which yields a resistance of $1 + G(\lv_i,v_i)$ as there is no resistance to observing $v_i$.
    Choosing $i$ and $a_i$ such that this resistance is minimal, the total resistance of this path is exactly $2-\tS_-(x)$ according to \cref{def:ritel:tW&tS_state}.
    
    If $x\in\cC\setminus\cE$, there exists an agent $i$ that is not at an equilibrium in $x$. Then the path is possible with any $v_i \ge N_i(a_i,\la_{-i}) \ge \lv_i+\delta$ granted that the resistance of the observation is finite and that $v_i \ge \lv_i+2\delta$. Denote $\nu_i = N_i(a_i,\la_{-i})$.
    For $v_i = \nu_i$, if possible, the path has resistance $1 + G(\lv_i,\nu_i) \ge 1 + G(\lv_i,\nu_i+\delta)$ as there is no resistance to observing $\nu_i$ and $G(\lv_i,\cdot)$ is non-increasing.
    For $v_i = \nu_i+\delta$, the path has resistance $1 + r_{U_i(a_i,\la_{-i})}(\nu_i+\delta) + G(\lv_i,\nu_i+\delta) \ge 1 + G(\lv_i,\nu_i+\delta)$.
    For $v_i \ge \nu_i+2\delta$, the path has resistance $1 + r_{U_i(a_i,\la_{-i})}(v_i) + G(\lv_i,v_i) \ge 1 + R_0 + G(\lv_i,v_i) \ge 2 > 1+G(\lv_i,\nu_i+\delta)$ according to \cref{corol:almost_regular_deviation,cond:R_0,cond:itel}. Regardless of the case, the resistance of a path corresponding to $i$ exploring $a_i$ and accepting is lower bounded by $1+G(\lv_i,\nu_i+\delta)$. Taking the minimum of this lower bound over all possible $i$ and $a_i$ such that $\nu_i\ge\lv_i+\delta$ yields the lower bound $2-\tS_+(x)$ according to \cref{def:ritel:tW&tS_state}.
    
    If $x\in\cE\setminus\cA_\delta$, all agents are at an equilibrium, which implies that any choice of $i$ and $a_i$ would satisfy $\nu_i \le \lv_i$, hence accepting would require an offset observation of at least two bins which according to \cref{corol:almost_regular_deviation,cond:R_0} implies a total resistance greater than $1+R_0 \ge 2$.
\end{proof}

Identifying which of the three paths has least resistance in \cref{lemma:ritel:easy_edge} implies the following bounds on the outward resistances.
\begin{lemma}
    \label{lemma:ritel:r_x}
    \hfill
    \begin{enumerate}[(i)]
        \item If $x\in\cA_\delta$, $r^\star(x) = 2\tR(x)$.
        \item If $x\in\cE\setminus\cA_\delta$, $r^\star(x) = 2$.
        \item If $x\in\cE_\delta\setminus\cA_\delta$, $r^\star(x) \le 2$.
        \item If $x\in\cC\setminus\cE$, $r^\star(x) \ge 2-\tS_+(x)$.
        \item If $x\in\cC_\delta\setminus\cE_\delta$, $r^\star(x) \le 2-\tS_-(x)$.
    \end{enumerate}
    Moreover, in all cases an easy edge leads either to $D$ or to a state $y\in\cC_\delta$ satisfying $W(y)>W(x)$.
\end{lemma}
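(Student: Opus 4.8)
The plan is to read off $r^\star(x)$ directly from \cref{lemma:ritel:easy_edge}. That lemma establishes that every way of leaving the recurrence class of a state $x\in\cC_\delta$ is captured by one of its three path types---repeated offset observations, an exploration that influences another agent, or an exploration that the explorer itself accepts---so that $r^\star(x)$ is exactly the smallest resistance among those of the path types that are \emph{available} at $x$. Each case of the present lemma is then obtained by determining which types exist and taking the minimum of the corresponding resistance values from \cref{lemma:ritel:easy_edge}. Throughout I will repeatedly use that $x\in\cC$ forces $\tR(x)\ge R_0$ by \cref{lemma:ritel:noise_stability}, that $R_0\ge 1$ by \cref{cond:R_0}, and that $0<\tS_+(x)\le 1$ and $0<\tS_-(x)\le 1$ under \cref{cond:itel}.

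The upper bounds are immediate from the existence statements in \cref{lemma:ritel:easy_edge}. For (v), when $x\in\cC_\delta\setminus\cE_\delta$ the exploration-acceptance path of type (iii) exists with resistance at most $2-\tS_-(x)$, so $r^\star(x)\le 2-\tS_-(x)$. For (iii), when $x\in\cE_\delta\setminus\cA_\delta$ the influence path of type (ii) exists with resistance $2$, so $r^\star(x)\le 2$. For the upper half of (i), when $x\in\cA_\delta$ has $\tR(x)<\pinfty$ the offset path of type (i) exists with resistance $2\tR(x)$; and for the upper half of (ii) the influence path again gives $r^\star(x)\le 2$.

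The lower bounds are where the cases must be argued in full. For (i), since every agent is optimized in $x\in\cA_\delta$ no exploration occurs, so types (ii) and (iii) are absent and only offset observations can change an agent's state. A single offset makes the agent watchful---an upward offset being impossible from the top benchmark bin $\{1\}$---after which observing its mean bin is read as no change and the agent reverts; hence at least two offsets, each of resistance $\ge\tR(x)$, are needed to leave, giving $r^\star(x)\ge 2\tR(x)$ and, with the matching upper bound, equality. For (ii), with $x\in\cE\setminus\cA_\delta\subset\cC\cap(\cE_\delta\setminus\cA_\delta)$, type (i) costs $2\tR(x)\ge 2R_0\ge 2$, type (iii) costs at least $2$ by \cref{lemma:ritel:easy_edge}, and type (ii) costs exactly $2$; the minimum is $2$, so $r^\star(x)=2$. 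For (iv), with $x\in\cC\setminus\cE$, type (i) costs $2\tR(x)\ge 2\ge 2-\tS_+(x)$, type (ii) if present costs $2\ge 2-\tS_+(x)$, and type (iii) costs at least $2-\tS_+(x)$ by \cref{lemma:ritel:easy_edge}; hence every available path costs at least $2-\tS_+(x)$ and $r^\star(x)\ge 2-\tS_+(x)$.

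Finally, the ``moreover'' claim follows because each of the three path types of \cref{lemma:ritel:easy_edge} terminates either at $D$ (types (i) and (ii)) or at an aligned state $y\in\cC_\delta$ with $W(y)>W(x)$ (type (iii)); the easy edge, being a minimal such path, inherits this property. The main obstacle is the lower-bound direction: it rests entirely on the exhaustiveness claim supplied by \cref{lemma:ritel:easy_edge}, and in particular on the delicate verification in case (i) that one offset observation is insufficient because the resulting watchful agent reverts upon its next aligned observation---which is exactly the feature that forces the factor $2$ in $2\tR(x)$.
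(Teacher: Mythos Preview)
Your proof is correct and follows essentially the same approach as the paper: both rely on the exhaustive classification of exit paths from \cref{lemma:ritel:easy_edge} and determine $r^\star(x)$ as the minimum over the available path types, using \cref{lemma:ritel:noise_stability} and \cref{cond:R_0} to bound $2\tR(x)\ge 2$ on strongly aligned states. Your treatment of case~(i) is in fact slightly more explicit than the paper's in justifying why \emph{two} offsets are needed (a single downward offset from the top bin $\{1\}$ only produces a watchful agent that reverts on the next aligned observation), but this is the same argument that is implicit in the paper's preamble to \cref{lemma:ritel:easy_edge}.
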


\begin{proof}
We already justified that an easy edge necessarily corresponds to one of the three paths of \cref{lemma:ritel:easy_edge}, which implies that easy edges will lead to $D$ or to $y\in\cC_\delta$ with $W(y)>W(x)$.
In the following we identify which of the three paths---which we refer to as paths (i), (ii), and (iii) as presented in \cref{lemma:ritel:easy_edge}---has minimal resistance depending on the nature of $x$.

\medskip\noindent\textbf{(i).}
If $x\in\cA_\delta$, \cref{lemma:ritel:easy_edge} states that path (i) is the only way to leave $x$, hence it is the easy edge and $r^\star(x)=2\tR(x)$.

\medskip\noindent\textbf{(ii).}
If $x\in\cE\setminus\cA_\delta$, $x$ is strongly aligned, hence path (i) has resistance equal to $2\tR(x) \ge 2R_0 \ge 2$ according to \cref{lemma:ritel:noise_stability,cond:R_0}. Moreover, path (ii) has resistance equal to $2$, whereas path (iii) has resistance greater than $2$. Therefore, path (ii) implies an easy edge and $r^\star(x) = 2$.

\medskip\noindent\textbf{(iii).}
If $x\in\cE_\delta\setminus\cA_\delta$, path (ii) has resistance equal to $2$, hence $r^\star(x) \le 2$.

\medskip\noindent\textbf{(iv).}
If $x\in\cC\setminus\cE$, $x$ is strongly aligned, hence path (i) has resistance equal to $2\tR(x) \ge 2$. Path (ii) has resistance $2$ and path (iii) has resistance greater than $2-\tS_+(x)$, which itself is lower than $2$. Therefore, $r^\star(x) \ge 2-\tS_+(x)$.

\medskip\noindent\textbf{(v).}
If $x\in\cC_\delta\setminus\cE_\delta$, path (iii) has resistance lower than $2-\tS_-(x)$, hence $r^\star(x) \le 2-\tS_-(x)$.
\end{proof}

It remains to discuss the resistance of edges $D \to x\in\cC_\delta$.
\begin{lemma}
    \label{lemma:ritel:r_D}
    \hfill
    \begin{enumerate}[(i)]
        \item If $x\in\cC_\delta$, then $r(D\to x) \ge 1 - \tW(x)$.
        \item If $x\in\cC$, then $r(D\to x) = 1 - \tW(x)$.
    \end{enumerate}
\end{lemma}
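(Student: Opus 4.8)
The plan is to follow the structure of the proof of \cref{lemma:itel:r_D} for \ac{itel}, adapting the bookkeeping from utilities to bins and splitting the argument into a lower bound valid on all of $\cC_\delta$ and a matching upper bound available only on the strongly aligned subset $\cC$.

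For the lower bound (i), I would take an arbitrary path $D\leadsto x$ in the resistance graph and argue agent by agent. Each agent $i$ starts discontent (at $D$) and ends content with benchmark bin $\lv_i$. The key observation, identical to \ac{itel}, is that the benchmark bin of a non-discontent agent is non-decreasing: inspecting \cref{tab:ritel:update_policy}, the benchmark bin can only rise (when a content or hopeful agent accepts an observation $v\ge\lv+2\delta$) or stay put, and it can drop only after the agent passes through a discontent state. Consequently, along the path there is a last transition of $i$ from discontent to content, at which $i$ accepts some bin $v_i^0\le\lv_i$. This acceptance carries resistance at least $F(v_i^0)=F((v_i^0)^-)\ge F(\lv_i^-)=F(\lv_i)$, because $F$ is non-increasing and $(v_i^0)^-\le\lv_i^-$; any resistance $r_{U(\ba)}(v_i^0)\ge0$ for actually observing that bin only adds to this. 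These acceptances occur for distinct agents, so their resistances add up, giving $r(D\to x)\ge\sum_i F(\lv_i)=1-\tW(x)$.

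For the equality (ii) I would exhibit a single direct transition $D\to x$ of resistance exactly $\sum_i F(\lv_i)$. Starting from $D$, let every agent play its target benchmark action $\bla$ simultaneously. Since $x\in\cC$ is strongly aligned, each agent's mean utility $\mu_i$ lies in its benchmark bin $\lv_i$, so by \cref{corol:almost_regular_deviation} each agent observes $\lv_i$ with resistance $0$ (this stays true in the boundary edge case of \cref{def:weak&strong_aligned}, where the observed-bin probability tends to $\tfrac12$ but still has resistance $0$). Each discontent agent then accepts bin $\lv_i$ with resistance $F(\lv_i)$, landing directly in the all-content aligned state $x$. Summing gives $r(D\to x)\le\sum_i F(\lv_i)=1-\tW(x)$, which together with (i) yields the claimed equality.

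The main obstacle—and the reason (ii) is restricted to $\cC$—is precisely the observation resistance that strong alignment eliminates. For a state in $\cC_\delta\setminus\cC$ the mean bin $\nu_i$ differs from the benchmark bin $\lv_i$ by $\pm\delta$, so reaching benchmark $\lv_i$ by playing $\bla$ forces an offset observation of positive resistance $r_{U(\bla)}(\lv_i)>0$; the direct construction then overshoots $\sum_i F(\lv_i)$ and equality can genuinely fail, leaving only the lower bound. A secondary point to settle cleanly is the bin convention for the discontent acceptance probability: I would use $F(v)=F(v^-)$ consistently, so that the resistance of a discontent agent accepting an observed bin $v$ is $F(v)$, matching the definition of $\tW$ in \cref{def:ritel:tW&tS_state}.
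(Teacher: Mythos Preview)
Your proposal is correct and follows essentially the same approach as the paper: the lower bound via monotonicity of the benchmark bin for non-discontent agents forcing a discontent acceptance of some bin $v\le\lv_i$ with cost $F(v)\ge F(\lv_i)$, and the matching upper bound via the direct one-step path where all agents play $\bla$ and accept their mean bin, which costs exactly $\sum_i F(\lv_i)$ when $x\in\cC$. Your additional remarks on the bin convention and on why equality may fail on $\cC_\delta\setminus\cC$ are accurate and go slightly beyond the paper's presentation, but the core argument is the same.
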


\begin{proof}
Let $x=(\blm,\bla,\blv)\in\cC_\delta$.

\medskip\noindent\textbf{(i).}
In \ac{ritel} as in \ac{itel}, a content agent cannot decrease in benchmark utility without first becoming discontent.
As in \cref{lemma:itel:r_D}, it follows that for a discontent agent to accept the bin $\lv$, it is needed at some point that it accepts a bin $v\le\lv$ with resistance $F(v) \ge F(\lv)$. Applying the reasoning to all agents yields $r(D\to x) \ge \sum_i F(\lv_i) = 1-\tW(x)$.

\medskip\noindent\textbf{(ii).}
When $x$ is strongly aligned, the direct path $D\to x$ corresponding to all agents simultaneously playing according to $\bla$ and accepting $\blv$ has a resistance equal to $1-\tW(x)$, so that the lower bound becomes an equality in this case and $r(D\to x) = 1-\tW(x)$.
\end{proof}

Before moving on to computing potentials, notice that the previous lemmas allow us to conclude on the nature of the \ac{ritel} process.
First of all, states $x\in\cA_\delta$ are absorbing in the perturbed process if and only if $\tR(x)=\pinfty$ according to \cref{lemma:ritel:r_x}. Note that an outward resistance of $\pinfty$ does not guarantee in general that no path exists leaving the state, only that if such path exists the probability of it happening is negligible before $\epsilon^r$ for any $r\ge0$. In this context however, \cref{lemma:ritel:easy_edge} states that the path associated with making offset observations does not exist when $\tR(x)=\pinfty$, hence states in $\cA$ are indeed absorbing.
Moreover, \cref{lemma:ritel:r_x,lemma:ritel:r_D} show that there is a path in $P^\varepsilon$ with finite resistance from $D$ to any state $x\in\cC_\delta$, and back when $x$ is not absorbing. The key argument to derive the above is exactly the same as in \cref{sec:itel:proof_res}, that is, easy edges from states in $\cC_\delta$ cannot create cycles as they always increase benchmark welfare before eventually going to $D$.

Therefore, if $\cA\ne\emptyset$, there is a path from any state to a state in $\cA$ which is absorbing. Hence the perturbed process is not irreducible and its recurrence classes are exactly the singletons $\{x\}\subset\cA$.
On the other hand, when $\cA=\emptyset$, $D$ communicates with all states in $\cC_\delta$, so that all these states belong to the same recurrence class. Hence the perturbed process consists of a unique recurrence class, \ie, is irreducible\footnote{There may actually be states that are not be part of the recurrence class of $P^\varepsilon$. In this case we restrict ourselves to the study of the single recurrence class, which is a \ac{arpmp}. This is without loss of generality, as the algorithm ends up \ac{as} in this recurrence class regardless of the starting state.}. It is also aperiodic as the transition $D\to D$ has positive probability.
This concludes \cref{prop:ritel:recurrence_P_eps}.
When $\cA\ne\emptyset$, the process converges \ac{as} to one of its recurrence classes, which is one of the absorbing states $x\in\cA$.

The rest of this section is devoted to the case where $\cA=\emptyset$, where \cref{thm:sss_min_gamma_AR} can be applied.
We now study minimal rooted trees for each state, in order to compute their potential.
\begin{lemma}
    \label{lemma:ritel:potential}
    \hfill
    \begin{enumerate}[(i)]
        \item $\gamma(D) = \sum_{x\in\cC} r^\star(x)$.
        \item $\gamma(x) = \gamma(D) - r^\star(x) + r(D\to x)
        \begin{cases}
            \le \gamma(D) - 2\tR(x) + 1 - \tW(x) & \text{if $x\in\cA_\delta$,}\\
            = \gamma(D) - 1 -\tW(x) & \text{if $x\in\cE\setminus\cA_\delta$,}\\
            \ge \gamma(D) - 1 -\tW(x) & \text{if $x\in\cE_\delta\setminus\cA_\delta$,}\\
            \le \gamma(D) - 1 - \tW(x) + \tS_+(x) & \text{if $x\in\cC\setminus\cE$,}\\
            \ge \gamma(D) - 1 - \tW(x) + \tS_-(x) & \text{if $x\in\cC_\delta\setminus\cE_\delta$.}
        \end{cases}$
    \end{enumerate}
\end{lemma}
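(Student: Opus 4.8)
The plan is to transcribe the proof of \cref{lemma:itel:potential} into the \ac{ritel} setting, feeding it the outward-resistance estimates of \cref{lemma:ritel:r_x} and the ingoing estimates of \cref{lemma:ritel:r_D} in place of their \ac{itel} counterparts, and then carrying the \emph{one-sided} nature of those estimates through to the final bounds. Throughout, the non-$D$ recurrence classes playing the role of $\cC$ in \ac{itel} are the weakly aligned states $\cC_\delta$ (as established in \cref{sec:ritel:proof_P_0}), and I work in the reduced graph over $\cC_\delta\cup\{D\}$ under the standing assumption $\cA=\emptyset$, which by \cref{lemma:ritel:r_x} guarantees that every $r^\star(x)$ is finite.

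For part (i) I would build a minimal $D$-tree out of easy edges. \cref{lemma:ritel:r_x} states that every easy edge leaving a state $x\in\cC_\delta$ goes either to $D$ or to a state $y\in\cC_\delta$ with strictly larger welfare $W(y)>W(x)$. Selecting one easy edge per non-$D$ vertex therefore cannot close a cycle, since a cycle would force a strict chain $W(x_1)<\cdots<W(x_m)=W(x_1)$; hence the selected edges form a spanning tree rooted at $D$ with exactly one outgoing edge per vertex. Its resistance is $\sum_x r^\star(x)$, and since any $D$-tree spends at least $r^\star(x)$ at each non-root vertex, this tree is minimal, giving $\gamma(D)=\sum_{x\in\cC_\delta}r^\star(x)$.

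For part (ii) I would bracket $\gamma(x)$ from both sides. The upper bound comes from tree surgery: starting from the minimal $D$-tree, delete the easy edge out of $x$ and insert a minimal-resistance path $D\leadsto x$; acyclicity is preserved because every former path to $D$ now terminates at $x$, and the resulting $x$-tree has resistance $\gamma(D)-r^\star(x)+r(D\to x)$, whence $\gamma(x)\le\gamma(D)-r^\star(x)+r(D\to x)$. The lower bound reuses the \ac{itel} path argument: in any $x$-tree, the unique path $D\leadsto x$ must on the one hand pay the discontent-acceptance cost $\sum_i F(\lv_i)=1-\tW(x)$ to install the benchmark bins $\blv$ (a content agent cannot lower its benchmark without first becoming discontent), and on the other hand pay at least $r^\star(y_k)$ to leave each intermediate recurrence class $y_k$; these two families of resistances are of disjoint nature and so add, and bounding the off-path edges by their own $r^\star$ yields $\gamma(x)\ge\gamma(D)-r^\star(x)+(1-\tW(x))$.

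It then remains to substitute the case-by-case bounds of \cref{lemma:ritel:r_x,lemma:ritel:r_D} and propagate each inequality in the right direction. For $x\in\cE\setminus\cA_\delta$ both $r^\star(x)=2$ and, by strong alignment, $r(D\to x)=1-\tW(x)$ are exact, giving the stated equality; for $x\in\cE_\delta\setminus\cA_\delta$ the estimates $r^\star(x)\le2$ and $r(D\to x)\ge1-\tW(x)$ feed the lower bracket; for $x\in\cC\setminus\cE$ the estimates $r^\star(x)\ge2-\tS_+(x)$ and $r(D\to x)=1-\tW(x)$ feed the upper bracket; and for $x\in\cC_\delta\setminus\cE_\delta$ the estimates $r^\star(x)\le2-\tS_-(x)$ and $r(D\to x)\ge1-\tW(x)$ feed the lower bracket. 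I expect the $\cA_\delta$ case to be the main obstacle: here $r^\star(x)=2\tR(x)$ is exact, but the claimed upper bound $\gamma(x)\le\gamma(D)-2\tR(x)+1-\tW(x)$ requires $r(D\to x)\le1-\tW(x)$, i.e. the equality of \cref{lemma:ritel:r_D} that only holds under strong alignment (for a weakly-but-not-strongly aligned optimal state the direct path $D\to x$ overshoots the benchmark bins, so $r(D\to x)>1-\tW(x)$ in general). The clean argument goes through for strongly aligned optimal states, where the direct simultaneous-acceptance path realizes $r(D\to x)=1-\tW(x)$; I would therefore emphasize that this suffices downstream, since in \cref{thm:ritel:X_star} the whole of $\cA_\delta$ is absorbed into the target superset of $\cX^\star$, so only an upper bound on the \emph{minimum} potential (witnessed by a strongly aligned optimal or near-optimal state) and lower bounds for states outside the target set are actually needed. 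The remaining bookkeeping—the $\pm\delta$ offsets distinguishing $\tS_+$ from $\tS_-$, the welfare-monotonicity of easy edges, and the disjointness of the two resistance families—transfers essentially verbatim from the \ac{itel} proof.
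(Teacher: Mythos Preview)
Your argument is correct and mirrors the paper's own (very brief) proof, which simply invokes the reasoning of \cref{lemma:itel:potential} and substitutes the bounds from \cref{lemma:ritel:r_x,lemma:ritel:r_D}. Your caution about the $\cA_\delta$ upper bound is in fact sharper than the paper: the paper asserts the exact identity $\gamma(x)=\gamma(D)-r^\star(x)+r(D\to x)$ and then substitutes, but for $x\in\cA_\delta\setminus\cC$ only $r(D\to x)\ge1-\tW(x)$ is available from \cref{lemma:ritel:r_D}, which points the wrong way for the claimed $\le$; as you correctly observe, this is harmless downstream because the only $\cA_\delta$ state to which that bound is actually applied in the proof of \cref{thm:ritel:X_star} is $x^\star\in\cE\subset\cC$, where strong alignment makes $r(D\to x^\star)=1-\tW(x^\star)$ exact.
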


\begin{proof}
The exact same reasoning as in \cref{lemma:itel:potential} shows that a $D$-tree can be constructed using only easy edges, proving (i), and that $\gamma(x) = \gamma(D) - r^\star(x) + r(D\to x)$ for $x\in\cC_\delta$. The different cases are deduced by replacing $r^\star(x)$ and $r(D\to x)$ by the values given by \cref{lemma:ritel:r_x,lemma:ritel:r_D}, concluding (ii).
\end{proof}

We now identify states that may belong to $\cX^\star$ using the bounds on $F$ and $G$ from \cref{cond:itel} which are common to \ac{itel} and \ac{ritel} and imply that $1-F_0 \le \tW \le 1$, $1-G_0 < \tS_- \le 1$, and
\begin{align*}
    0 \le 1-\tW < \tS_- \le 1
\end{align*}
for any possible value of $\tS_-$ and $\tW$.
When $\cE\ne\emptyset$, the goal is to show that
\begin{align}
    \label{eq:proof:ritel:case1}
    \cX^\star \subset \cA_\delta \cup \La x\in\cE_\delta\setminus\cA_\delta ~:~ \tW(x) \ge \max_\cE \tW\Ra~.
\end{align}
Denote $\tW^\star = \max_{\cE}\tW$ and $x^\star\in\cE$ such that $\tW(x^\star) = \tW^\star$. \Cref{lemma:ritel:potential} states that
\begin{align*}
    \begin{cases}
        \gamma(x^\star) = \gamma(D) - 1-\tW^\star & \text{if $x^\star\notin\cA_\delta$,}\\
        \gamma(x^\star) \le \gamma(D) - 2\tR(x^\star) + 1-\tW^\star \le \gamma(D) - 1-\tW^\star & \text{if $x^\star\in\cA_\delta$,}
    \end{cases}
\end{align*}
as $x$ is strongly aligned hence $\tR(x) \ge R_0 \ge 1$ due to \cref{lemma:ritel:noise_stability,cond:R_0}.
It follows that states in $\cX^\star$ necessarily have a potential lower than $\gamma(D) - 1-\tW^\star$.
Let $x\in \cE_\delta\setminus\cA_\delta$ with $\tW(x) < \tW^\star$ and $y\in\cC_\delta\setminus\cE_\delta$. \Cref{lemma:ritel:potential} and the different bounds on $\tW$ and $\tS$ imply that
\begin{align*}
    \begin{cases}
        \gamma(x) \ge \gamma(D) - 1 - \tW(x)
        >  \gamma(D) - 1 - \tW^\star~,\\
        \gamma(y) \ge \gamma(D) - 1 - \tW(y) + \tS_-(y) \ge \gamma(D) - 2 + \tS_-(y) > \gamma(D) - 1 - \tW^\star~,\\
        \gamma(D) > \gamma(D) - 1-\tW^\star~.
    \end{cases}
\end{align*}
Then, a state in $\cX^\star$ cannot be in either of the above three cases as its potential would be higher than that of $x^\star$, hence \cref{eq:proof:ritel:case1} holds which concludes the first case of \cref{thm:ritel:X_star}.
When $\cE=\emptyset$, the goal is to show that
\begin{align}
    \label{eq:proof:ritel:case2}
    \cX^\star \subset \cE_\delta \cup \La x\in\cC_\delta\setminus\cE_\delta ~:~ \tW(x)-\tS_-(x) \ge \max_\cC (\tW-\tS_+)\Ra~.
\end{align}
Denote $\tW^\star-\tS_+^\star = \max_{x\in\cC} \tW(x)-\tS_+(x)$ and $x^\star\in\cC$ such that $\tW(x) = \tW^\star$ and $\tS_+(x) = \tS_+^\star$. \Cref{lemma:ritel:potential} states that
\begin{align*}
    \gamma(x^\star)
    \le \gamma(D) - 1 - \tW^\star + \tS_+^\star~.
\end{align*}
It follows that states in $\cX^\star$ necessarily have a potential lower than $\gamma(D) - 1 - \tW^\star + \tS_+^\star$. 
Let $y\in \cC_\delta\setminus\cE_\delta$ with $\tW(y)-\tS_-(y) < \tW^\star-\tS_+^\star$. \Cref{lemma:ritel:potential} and the different bounds on $\tW$ and $\tS$ imply that
\begin{align*}
    \begin{cases}
        \gamma(y) \ge \gamma(D) - 1 - \tW(y) + \tS_-(y) > \gamma(D) - 1 - \tW^\star + \tS_+^\star~,\\
        \gamma(D) > \gamma(D) - 1 - \tW^\star + \tS_+^\star~.
    \end{cases}
\end{align*}
Then, a state in $\cX^\star$ cannot be in either of the above two cases as its potential would be higher than that of $x^\star$, hence \cref{eq:proof:ritel:case2} holds which concludes the second case of \cref{thm:ritel:X_star}.

\subsection{Convergence in Terms of Action Profiles}
\label{sec:ritel:convergence_in_action_profiles}

It remains to translate \cref{thm:ritel:X_star} to a convergence result regarding action profiles as stated by \cref{thm:ritel:convergence}. We do so using \cref{lemma:ritel:correspondence_equilibrium,lemma:ritel:correspondence_tW&tS}.
First, let us discuss the properties of action profiles of states that can be part of $\cX^\star$ according to \cref{thm:ritel:X_star}.
If $\cA\ne\emptyset$, then the process converges \ac{as} to $\cA$. Since $\cA\subset\cA_\delta$, the benchmark action profiles of such states are $\delta$-optimal according to \cref{lemma:ritel:correspondence_equilibrium} (ii).

If $\cA=\emptyset$ and $\cE\ne\emptyset$ then
\[\cX^\star \subset \cA_\delta \cup \La x\in\cE_\delta\setminus\cA_\delta:\tW(x) \ge \tW(x^\star)\Ra\]
where $x^\star\in\cE$ is such that $\tW(x^\star) = \max_{x\in\cE}\tW(x)$.
Denote $\ba^\star$ an action profile maximizing $\tW$ among equilibrium action profiles and $y\in\cC$ the strongly aligned state associated with $\ba^\star$, which belongs to $\cE$ according to \cref{lemma:ritel:correspondence_equilibrium} (i). Then $\tW(x^\star) \ge \tW(y) \ge \tW_-(\ba^\star)$ according to \cref{lemma:ritel:correspondence_tW&tS} (ii).
Let $x\in\cX^\star$ and denote $\bla$ its action profile. If $x\in\cA_\delta$ then $\bla$ is $\delta$-optimal according to \cref{lemma:ritel:correspondence_equilibrium} (ii).
If $x\in\cE_\delta\setminus\cA_\delta$ and $\tW(x) \ge \tW(x^\star)$ then $\bla$ is a $3\delta$-equilibrium according to \cref{lemma:ritel:correspondence_equilibrium} (iii). Moreover, \cref{lemma:ritel:correspondence_tW&tS} (i) implies that $\tW_+(\bla) \ge \tW(x) \ge \tW(x^\star) \ge \tW_-(\ba^\star)$.
We conclude that when $\cE\ne\emptyset$, the action profiles $\bla$ of \acp{sss} are either $\delta$-optimal or $3\delta$-equilibria with $\tW_+(\bla) \ge \tW_-(\ba^\star)$.

If $\cE=\emptyset$ then
\[\cX^\star \subset \cE_\delta \cup \La x\in\cC_\delta\setminus\cE_\delta ~:~ \tW(x)-\tS_-(x) \ge \tW(x^\star)-\tS_+(x^\star)\Ra\]
where $x^\star\in\cC$ is such that $\tW(x^\star)-\tS_+(x^\star) = \max_{x\in\cC} \tW(x)-\tS_+(x)$.
Denote $\ba^\star$ an action profile maximizing $\tW-\tS$ among equilibria and $y\in\cC$ the strongly aligned state associated with $\ba^\star$. Then $\tW(x^\star)-\tS_+(x^\star) \ge \tW(y)-\tS_+(y) \ge \tW_-(\ba^\star)-\tS_+(\ba^\star)$ according to \cref{lemma:ritel:correspondence_tW&tS} (ii) and (iv).
Let $x\in\cX^\star$ and denote $\bla$ its action profile.
If $x\in\cE_\delta$ then $\bla$ is a $3\delta$-equilibrium according to \cref{lemma:ritel:correspondence_equilibrium} (iii).
if $x\in\cC_\delta\setminus\cE_\delta$ with $\tW(x)-\tS_-(x) \ge \tW(x^\star)-\tS_+(x^\star)$ then
\[\tW_+(\bla)-\tS_-(\bla)
\ge \tW(x)-\tS_-(x)
\ge \tW(x^\star)-\tS_+(x^\star)
\ge \tW_-(\ba^\star)-\tS_+(\ba^\star)\]
according to \cref{lemma:ritel:correspondence_tW&tS} (i) and (iii).
We conclude that when $\cE=\emptyset$, the action profiles $\bla$ of \acp{sss} either are $3\delta$-equilibria or satisfy $\tW_+(\bla)-\tS_-(\bla) \ge \tW_-(\ba^\star)-\tS_+(\ba^\star)$.

Now, depending on the existence of equilibrium action profiles, the following holds.
\begin{itemize}
    \item If there exist equilibrium action profiles, then $\cE\ne\emptyset$ and the \ac{sss} of the \ac{ritel} process have action profiles $\ba$ that are either $\delta$-optimal or $3\delta$-equilibria with $\tW_+(\ba) \ge \tW_-(\ba^\star)$ where $\ba^\star$ maximizes $\tW$ over equilibrium action profiles.
    \item Else, the \ac{sss} of the \ac{ritel} process have action profiles $\ba$ that either are $3\delta$-equilibria or satisfy $\tW_+(\ba)-\tS_-(\ba) \ge \tW_-(\ba^\star)-\tS_+(\ba^\star)$ where $\ba^\star$ maximizes $\tW-\tS$ over all action profiles.
\end{itemize}
Notice that the non-existence of equilibrium action profiles does not guarantee that $\cE=\emptyset$. The above statement remains true, as the action profiles described in the case of $\cE\ne\emptyset$ are included in the ones described in the case $\cE=\emptyset$.
Likewise, it is not possible to know whether $\cA=\emptyset$ or not solely from the mean utilities of action profiles, as this information requires the knowledge of the support of the utility distributions. However, the action profiles described in the case $\cA\ne\emptyset$ are included in the ones described in the other cases.
This concludes \cref{thm:ritel:convergence}.

\end{document}